\newtheorem{Theorem}{Theorem}[section]
\newtheorem{Definition}[Theorem]{Definition}
\newtheorem{Proposition}[Theorem]{Proposition}
\newtheorem{Lemma}[Theorem]{Lemma}
\newtheorem{Corollary}[Theorem]{Corollary}
\newtheorem{Remark}[Theorem]{Remark}
\newtheorem{Example}[Theorem]{Example}
\newtheorem{Hypothesis}{Hypothesis}
\numberwithin{equation}{section}
\begin{document}

\def\le{\left}
\def\r{\right}
\def\cost{\mbox{const}}
\def\a{\alpha}
\def\d{\delta}
\def\ph{\varphi}
\def\e{\epsilon}
\def\la{\lambda}
\def\si{\sigma}
\def\La{\Lambda}
\def\B{{\cal B}}
\def\A{{\mathcal A}}
\def\L{{\mathcal L}}
\def\O{{\mathcal O}}
\def\bO{\overline{{\mathcal O}}}
\def\F{{\mathcal F}}
\def\K{{\mathcal K}}
\def\H{{\mathcal H}}
\def\D{{\mathcal D}}
\def\C{{\mathcal C}}
\def\M{{\mathcal M}}
\def\N{{\mathcal N}}
\def\G{{\mathcal G}}
\def\T{{\mathcal T}}
\def\R{{\mathbb R}}
\def\I{{\mathcal I}}

\def\bw{\overline{W}}
\def\phin{\|\varphi\|_{0}}
\def\s0t{\sup_{t \in [0,T]}}
\def\lt{\lim_{t\rightarrow 0}}
\def\iot{\int_{0}^{t}}
\def\ioi{\int_0^{+\infty}}
\def\ds{\displaystyle}
\def\pag{\vfill\eject}
\def\fine{\par\vfill\supereject\end}
\def\acapo{\hfill\break}

\def\beq{\begin{equation}}
\def\eeq{\end{equation}}
\def\barr{\begin{array}}
\def\earr{\end{array}}
\def\vs{\vspace{.1mm}   \\}
\def\rd{\reals\,^{d}}
\def\rn{\reals\,^{n}}
\def\rr{\reals\,^{r}}
\def\bD{\overline{{\mathcal D}}}
\newcommand{\dimo}{\hfill \break {\bf Proof - }}
\newcommand{\nat}{\mathbb N}
\newcommand{\E}{\mathbb E}
\newcommand{\Pro}{\mathbb P}
\newcommand{\com}{{\scriptstyle \circ}}
\newcommand{\reals}{\mathbb R}

\def\dist{{\textnormal{dist}}}
\def\Amu{{A_\mu}}
\def\Qmu{{Q_\mu}}
\def\Smu{{S_\mu}}
\def\H{{\mathcal{H}}}
\def\Im{{\textnormal{Im }}}
\def\Tr{{\textnormal{Tr}}}
\def\E{{\mathbb{E}}}
\def\P{{\mathbb{P}}}

\title{On a general approach to Freidlin-Wentzell exit problems for stochastic equations in Banach spaces}
\author{Michael Salins\\
\vspace{.1cm}\\
Department of Mathematics\\
 University of Maryland\\
College Park\\
 Maryland, USA
}

\date{}

\maketitle

\begin{abstract}
  Freidlin and Wentzell characterized the logarithmic asymptotics of the exit time from a basin of attraction for a finite dimensional diffusion with small noise. After that, several authors studied the same properties for exit problems associated to specific infinite dimensional systems. In this paper, we present a general method, based a control theoretic approach, to establish exit time and exit place results for a large class of stochastic equations in Banach spaces. 
\end{abstract}

\section{Introduction}

In \cite{fw}, Freidlin and Wentzell characterize the exit time and exit place asymptotics from a basin of attraction for stochastic differential equations of the form
\begin{equation} \label{finite-dim-eq}
  \left\{ \begin{array}{l}
    \ds{dX^\e_x(t) = f(X^\e_x(t)) dt + \sqrt{\e} \sigma(X^\e_x(t)) dW(t),} \\
    \vs
    \ds{X^\e_x(0) = x \in \mathbb{R}^d.}
  \end{array} \right.
\end{equation}
They investigate the asymptotics of the exit time,
\[\tau^\e_x = \inf\{t>0: X^\e_x(t) \not \in G \},\]
 where $G \subset \mathbb{R}^d$ is an open set such that the unperturbed system, $X^0_x$, is uniformly attracted to one asymptotically stable equilibrium point $a \in G$, without leaving $G$. Because the unperturbed system never leaves $G$, $\tau^\e_x$ will diverge as $\e \to 0$.
More specifically, using the theory of large deviations, it can be shown that this divergence is of exponential type and that for any $x \in G$,
\begin{equation} \label{eq:intro-finite-dim-exit}
  \lim_{\e \to 0} \e \log \E \tau^\e_x =  \inf_{y \in \partial G} V(y),
\end{equation}
where $V(y)$ is a non-negative function called the quasipotential.
Additionally, if there is a unique $y_0 \in \partial G$ such that $V(y_0) = \inf_{y \in \partial G} V(y)$, then
\[\lim_{\e \to 0} X^\e_x(\tau^\e_x) = y_0 \text{ in probability.}\]
In other words, $X^\e_x$ exits $G$ near $y_0$ with overwhelming probability.


In this paper, we deal with the following class of stochastic equations in a Banach space $E$,
\begin{equation} \label{intro-eq}
  \left\{ \begin{array}{l}
    \ds{dX^\e_x(t) = (AX^\e_x(t) + F(X^\e_x(t)) )dt + \sqrt{\e} B(X^\e_x(t)) dw(t),}\\
    \vs
    \ds{X^\e_x(0) = x \in E.}
  \end{array} \right.
\end{equation}
and we establish analogous exit time and exit place results. In the above equation, $A$ is the generator of a $C_0$ semigroup, and $F:E \to E$ is a dissipative nonlinear mapping. Typically $E$ would be a function space, for example, an $L^p$ space, the space of continuous functions, or a space of H\"older-continuous functions, and $A$ would be the realization of some linear differential operator.

The exit time asymptotics have previously been characterized for a variety of infinite dimensional equations including  stochastic reaction diffusion equations \cite{cerrok,cf-2011,cm-1997,f-1988}, stochastic damped Schr\"odinger equations \cite{gautier-2008}, stochastic semilinear wave equations \cite{sal}, and stochastic Navier-Stokes equations \cite{navier}. In each of these papers a proof of the exit time has been given, taking into account the specific structure of the underlying equation. Our aim in the current paper is to introduce a unified approach to study the exit problem for solutions of abstract stochastic evolution equations in Banach spaces that can apply to a wide variety of problems.

For a bounded open set $G \subset E$ that contains $0$, we study the exit times
\[\tau^\e_x = \inf\{t>0: X^\e_x(t) \not \in G \}.\]
We show that $\{X^\e_x\}_{\{\e>0\}}$ satisfies a large deviations principle on the space $C([0,T];E)$ with respect to the rate function $I_{0,T}$ given by
 \begin{equation} \label{eq:intro-I_0T}
   I_{0,T}(\varphi) = \frac{1}{2} \inf\left\{|\psi|_{L^2([0,T];H)}^2 : \varphi = X^\psi_x \right\}
 \end{equation}
 where $X^\psi_x$ is the unique mild solution to the deterministic control problem
 \[\left\{
 \begin{array}{l}
   \ds{\frac{d}{dt} X^\psi_x(t) = A X^\psi_x(t) + F(X^\psi_x(t)) + B(X^\psi_x(t)) \psi(t),} \\
   \vs
   \ds{X^\psi_x(0) = x,}
 \end{array}
 \right.\]
 and $H$ is some Hilbert space on which the noise $w(t)$ is defined.
 Importantly, we prove that this large deviation principle is uniform for initial conditions $|x|_E \leq R$, for each $R>0$. We recall the definition of a uniform large deviations principle.
 \begin{Definition} \label{def:uniform-LDP}
  The family $\{X^\e_x\}_{\e>0}$ in $C([0,T];E)$ satisfies a large deviations principle with speed $\e$ and rate function $I_{0,T}$ uniform with respect to initial conditions $|x|_E \leq R$, if for all $R>0$,
  \begin{enumerate}
    \item[\textit{i.}] For any $\psi \in L^2([0,T];H)$, $\delta>0$,
    \begin{equation} \label{eq:unif-LDP-lower}
      \liminf_{\e \to 0} \e \log \left( \inf_{|x|_E\leq R} \Pro \left|X^\e_x - X^\psi_x \right| < \delta \right) \geq -\frac{1}{2} |\psi|_{L^2([0,T];H)}^2.
    \end{equation}
    \item[\textit{ii.}] For any $r>0$ and $\delta>0$,
    \begin{equation} \label{eq:unif-LDP-upper}
      \limsup_{\e \to 0} \e \log \left( \sup_{|x|_E \leq R} \Pro \left( \dist_{C([0,T];E)}(X^\e_x, {K}^x_{0,T}(r)) > \delta \right) \right) \leq -r
    \end{equation}
    where
    \[{K}^x_{0,T}(r) = \left\{X^\psi_x \in C([0,T];E) : \frac{1}{2}|\psi|_{L^2([0,T];H)}^2 \leq r \right\}.\]
  \end{enumerate}
 \end{Definition}
Because $E$ is infinite dimensional, uniformity on bounded sets of initial conditions is a stronger requirement than uniformity for $x \in K$ where $K$ is a compact subset of $E$ (see for example \cite{bdm-2008,dz}). The set $\{x \in E: |x|_E \leq R\}$ is not compact if $E$ is infinite dimensional, but we still are able to prove a uniform large deviation principle.

Da Prato and Zabczyk \cite{DaP-Z,zab-87} characterized the exit problem for Hilbert space valued stochastic equations with dissipative nonlinearities and additive noise. They prove that
\begin{equation} \label{eq:intro-upper-Zabczyk}
  \limsup_{\e \to 0} \e \log \E \tau^\e_x \leq \bar{e}(\bar{G}^c)
\end{equation}
and
\begin{equation} \label{eq:intro-lower-Zabczyk}
 \liminf_{\e \to 0} \e \log \E \tau^\e_x \geq \underline{e}(\partial G),
\end{equation}
where
\[\bar{e}(\bar{G}^c) = \inf\{I_{0,T}(\varphi): \varphi(0) = 0, \varphi(T) \in \bar{G}^c , T>0\},\]
\[e_r(\partial G) = \inf\{I_{0,T}(\varphi): |\varphi(0)|_E = r, \varphi(T) \in \partial G, T>0\},\]
\[\underline{e}(\partial G) = \lim_{r \to 0} e_r(\partial G).\]
It is immediate from these definitions that $\underline{e}(\partial G) \leq \bar{e}(\bar{G}^c)$. In the finite dimensional case (such as \cite{fw}) one can show that $\underline{e}(\partial G) = \bar{e}(\bar{G}^c)$. In fact, Freidlin and Wentzell demonstrate that in this case, the function
\[e(x,y) = \inf\{I_{0,T}(\varphi): \varphi(0) = x, \varphi(T)=y, T>0\}\]
is actually continuous in $x$ and $y$. In the infinite dimensional case, however, one cannot expect such continuity because $e(x,y)=+\infty$ on a dense subset of $E \times E$. Despite this difficulty, in the current paper we show that $\underline{e}(\partial G) = \bar{e}(\bar{G}^c)$ for a large class of general Banach space valued problems. Such an equality guarantees that no gap exists between \eqref{eq:intro-lower-Zabczyk} and \eqref{eq:intro-upper-Zabczyk}.

 To prove this equality, we define $I_{-\infty,0}$ as an extension of the rate functions $I_{0,T}$ to the space $C((-\infty,0);E)$. This extension is possible because \eqref{intro-eq} is time homogeneous. We then define the quasipotential for any $N \subset E$ as
\[V(N) = \inf\{ I_{-\infty,0}(\varphi): \lim_{t \to -\infty} |\varphi(t)|_E = 0, \varphi(0) = N  \}.\]
The quasipotential satisfies
\[\underline{e}(\partial G) \leq V(\partial G) \leq \bar{e}(\bar{G}^c). \]
We will show that the above quantities are actually equal if the level sets of $I_{-\infty,0}$ are compact in the topology of uniform converge on bounded intervals, the map $x \in E \mapsto X^\psi_x(t) \in E$ is continuous uniformly in time, and the set $G$ satisfies the regularity assumption
\[V(\partial G) = V(\bar{G}^c).\]
We show that these assumptions are satisfied in unexpected generality. First, in section \ref{sec:linear} we study the case where \eqref{intro-eq} is linear and has additive noise. That is, $F(x)\equiv 0$ and $B(x)\equiv Q$ a linear operator. Using functional analytic arguments, we show that the compactness of the level sets of $I_{-\infty,0}$ is actually a consequence of the existence of $E$-valued mild solutions to \eqref{intro-eq}. In section \ref{sec:additive}, we show that if the noise is additive and the the nonlinearity $F$ satisfies a certain dissipativity assumption, then the level sets of $I_{-\infty,0}$ are still compact. The situation is significantly more complicated in the multiplicative noise case, but in section \ref{sec:multiplicative}, we provide some sufficient conditions that guarantee that the level sets of $I_{-\infty,0}$ are still compact.

Once we establish the compactness of the level sets of $I_{-\infty,0}$, we show that $\tau^\e_x$ diverges exponentially as $\e \to 0$. Specifically, we prove that
\begin{equation} \label{eq:intro-E-tau}\lim_{\e \to 0} \e \log \E \tau^\e_x = V(\partial G),\end{equation}
and
\begin{equation} \label{eq:intro-tau}\lim_{\e \to 0} \e \log \tau^\e_x = V(\partial G) \text{ in probability}.\end{equation}
We also prove that $X^\e_x$ is likely to exit $G$ near the points that minimize $V$ on the boundary of $G$. This means that if $N \subset \partial G$ is closed and \[V(N)>  V(\partial G),\] then
\[\lim_{\e \to 0} \Pro \left( X^\e_x(\tau^\e_x) \in N \right) =0.\]
In particular, if there exists a unique $y^* \in \partial G$ such that \[V(y^*) = \inf_{x \in \partial G} V(x),\] then by setting $N = \{y \in \partial G: |y-y^*|_E > \delta\}$, for any $\delta$, we can show that
\[\lim_{\e \to 0 } X^\e_x(\tau^\e_x) = y^* \text{ in probability.}\]

Our proofs of the exit time and exit place results are largely based on the proofs of \cite[Chapter 5]{dz}, but important and nontrivial modifications have to be introduced to allow us to deal with the infinite dimensionality of the problem.
Notice that because $G$ is open and infinite dimensional, it is never compact. The famous proofs of the exit time results for finite dimensional systems (\cite{dz,fw}) took advantage of the fact that bounded sets of $\reals^d$ are compact. In the current paper, we show that the compactness of $G$ is unnecessary because the level sets of $I_{-\infty,0}$ are compact. Because of this, our results extend the results of Chenal and Millet \cite{cm-1997}, where they studied the exit time of the stochastic heat equation from bounded subsets of the H\"older space $E=C^\alpha([0,1])$ for $\alpha>0$. The results of this paper allow us characterize the exit time and exit place from bounded subsets of the space of continuous functions $E=C([0,1])$.

In Section \ref{sec:assumptions}, we introduce our notations and we identify our hypotheses. In sections \ref{sec:linear}, \ref{sec:additive}, and \ref{sec:multiplicative}, we prove that the level sets of $I_{-\infty,0}$ are compact. Finally, in section \ref{sec-exit-proofs} we prove the exit time and exit place results.

\section{Assumptions and Preliminaries} \label{sec:assumptions}
Let $\left(E, |\cdot|_E\right)$ be a Banach space. We denote by $E^\star$ the dual space of $E$ and use the notation $\left<x,x^\star\right>_{E,E^\star}$ to represent the duality. For any $T>0$ we denote by $C([0,T];E)$ the Banach space of continuous functions from $[0,T]$ to $E$ endowed with the norm
 \[|\varphi|_{C([0,T];E)} = \sup_{t \in [0,T]} |\varphi(t)|_E.\]
 We denote by $C((-\infty,0];E)$ the metric space of continuous functions from $(-\infty,0]$ to $E$ endowed with the topology of uniform convergence on compact subsets of $(-\infty,0]$. Recall that this is a metric space under the metric
  \[\rho(\varphi, \psi) = \sum_{n=1}^\infty 2^{-n} \left(\sup_{t \in [-n, -n+1]} |\varphi(t) - \psi(t)|_E \right).\]
  For any set $G \subset E$, the complement of $G$ is denoted as $G^c = E \setminus G$ and the closure of $G$ is denoted as $\bar{G}$.

 Let $H$ be a Hilbert space of square integrable functions endowed with the inner product $\left<\cdot, \cdot\right>_H$ and norm $|\cdot|_H$. For any $-\infty \leq t_1 < t_2 \leq +\infty$ we denote by $L^2((t_1,t_2);H)$ the Hilbert space endowed with the inner product
 \[\left< \psi, \phi\right>_{L^2((t_1,t_2);H)} = \int_{t_1}^{t_2} \left< \psi(s), \phi(s) \right>_H ds.\]

We study the following equation
\begin{equation} \label{eq}
  \left\{
  \begin{array}{l}
    \ds{dX^\e_x(t) = (A X^\e_x(t) +F(X^\e_x(t))) dt + \sqrt{\e} B(X^\e_x(t)) dw(t),}\\
    \vs
    \ds{X^\e_x(0) = x,}
  \end{array}
  \right.
\end{equation}
where $A: D(A) \subseteq E \to E$ is the generator of a $C_0$ semigroup $S(t)$, $F:E \to E$ is a nonlinear mapping and $B$ maps $E$ into a space of linear mappings that are not necessarily bounded, but have the property that for any $t>0$ and $x \in E$, $S(t)B(x) \in \mathcal{L}(H,E)$. In the above equation, $w(t)$ is a cylindrical Wiener process on some Hilbert space $H$. This means that formally
\begin{equation} \label{eq:w-def}
  w(t) = \sum_{k=1}^\infty \beta_k(t) e_k,
\end{equation}
where $\{\beta_k\}$ is a family of independent one-dimensional Brownian motions on some stochastic basis $(\Omega, \mathcal{F}, \mathcal{F}_t, \Pro)$, and $\{e_k\}$ is a complete orthonormal system of $H$.

\begin{Definition}[Mild solution]
 An adapted process $X^\e_x \in C([0,T];E)$ is called a mild solution for \eqref{eq} if for $t \in [0,T]$,
 \begin{equation} \label{eq:mild-solution}
 X^\e_x(t) = S(t) x + \int_0^t S(t-s) F(X^\e_x(s)) ds + \sqrt{\e} \int_0^t S(t-s) B(X^\e_x(s)) dw(s).
 \end{equation}
\end{Definition}

We also introduce the deterministic control problem
\begin{equation} \label{eq:prelim-control}
  \left\{
  \begin{array}{l}
    \ds{\frac{d}{dt}X^\psi_x(t) = A X^\psi_x(t) + F(X^\psi_x(t)) + B(X^\psi_x(t)) psi(t),}\\
    \vs
    \ds{X^\psi_x(0) = x,}
  \end{array}
  \right.
\end{equation}
and its mild solution
\begin{equation} \label{eq:mild-control}
  X^\psi_x(t) = S(t)x + \int_0^t S(t-s)F(X^\psi_x(s))ds + \int_0^t S(t-s) B(X^\psi_x(s)) \psi(s) ds.
\end{equation}
\section{Linear equation with additive noise}\label{sec:linear}
We now consider the case of a linear stochastic equation with additive noise
\begin{equation} \label{eq:linear}
 \left\{
 \begin{array}{l}
   \ds{dX^\e_x(t) = A X^\e_x(t) dt + \sqrt{\e} Q dw(t),}\\
   \vs
   \ds{X^\e_x(0) = x \in E.}
 \end{array}
 \right.
\end{equation}
\begin{Hypothesis} \label{H:linear}
 The operator $A: D(A) \subset E \to E$ is the generator of a $C_0$ semigroup $S(t)$ of negative type. That is, there exist $M\geq1$ and $\omega>0$ such that $\|S(t)\|_{\mathcal{L}(E)} \leq M e^{-\omega t}$.
\end{Hypothesis}
Hypothesis \ref{H:linear} guarantees that the unperturbed equation $X^0_x$ has a globally asymptotic stable equilibrium at $0$.
The mild solution to \eqref{eq:linear} is given by
\begin{equation}
  X^\e_x(t) = S(t)x + \sqrt{\e} \int_0^t S(t-s)Q dw(s)=:S(t)x + \sqrt{\e}Z(t).
\end{equation}
We call $Z(t)$ the stochastic convolution.
\begin{Hypothesis} \label{H:QandZ}
  $Q$ is a linear operator with domain $H$. For any $t>0$, $S(t)Q \in \mathcal{L}(H, E)$. Furthermore, for any $t>0$, the stochastic convolution $Z(t)$ is an $E$-valued Gaussian random variable.
\end{Hypothesis}
\begin{Proposition}
  If  $E$ is a Hilbert space and for any $T>0$, and any complete orthonormal system $\{e_k\}$ of $H$,
  \begin{equation} \label{eq:suffient-cond-for-HQZ}
    \sum_{k=1}^\infty \int_0^T \left|S(t)Qe_k\right|_E^2 dt <+\infty,
  \end{equation}
  then Hypothesis \ref{H:QandZ} is satisfied.
\end{Proposition}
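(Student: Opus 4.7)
The plan is to verify in turn the two non-trivial requirements of Hypothesis \ref{H:QandZ}: that $S(t)Q\in\mathcal L(H,E)$ for every $t>0$, and that $Z(t)$ is an $E$-valued Gaussian random variable. The summability assumption will yield Hilbert--Schmidt regularity of $S(t)Q$ for almost every $t$, which the semigroup property will then upgrade to every $t>0$; Gaussianity will follow from the classical theory of stochastic integrals of deterministic Hilbert--Schmidt--valued integrands against a cylindrical Wiener process.

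By Tonelli's theorem, the hypothesis \eqref{eq:suffient-cond-for-HQZ} is equivalent to
\[
\int_0^T\sum_{k=1}^\infty|S(t)Qe_k|_E^2\,dt<+\infty,
\]
so $\sum_{k=1}^\infty|S(t)Qe_k|_E^2<+\infty$ for almost every $t\in(0,T)$. Because $E$ is a Hilbert space and $\{e_k\}$ is an orthonormal basis of $H$, this is precisely the criterion for $S(t)Q$ to extend uniquely (and independently of the basis) to a Hilbert--Schmidt, hence bounded, operator from $H$ to $E$. Let $\Gamma\subset(0,T)$ denote the resulting full-measure set of such $t$. For arbitrary $t>0$, choose $T\geq t$ and $t_0\in(0,t)\cap\Gamma$, and write
\[
S(t)Q=S(t-t_0)\,\bigl[S(t_0)Q\bigr].
\]
Since $S(t-t_0)\in\mathcal L(E)$ by Hypothesis \ref{H:linear} and $S(t_0)Q\in\mathcal L(H,E)$, the composition lies in $\mathcal L(H,E)$, establishing the second assertion of Hypothesis \ref{H:QandZ}.

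For the stochastic convolution, the same estimate (after the change of variables $r=t-s$) shows that $s\mapsto S(t-s)Q$ is a deterministic Hilbert--Schmidt-valued function with
\[
\int_0^t\sum_{k=1}^\infty|S(t-s)Qe_k|_E^2\,ds=\int_0^t\sum_{k=1}^\infty|S(r)Qe_k|_E^2\,dr<+\infty.
\]
By the standard construction of stochastic integrals of such integrands against a cylindrical Wiener process on $H$ with values in a Hilbert space (see, e.g., \cite{DaP-Z}), $Z(t)=\int_0^tS(t-s)Q\,dw(s)$ is a well-defined, centered, $E$-valued Gaussian random variable, giving the third assertion of Hypothesis \ref{H:QandZ}. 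The only mild subtlety in the argument is the passage from almost every $t$ to every $t>0$, which the semigroup property resolves immediately.
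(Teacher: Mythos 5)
Your proof is correct, and on the central point -- that $Z(t)$ is an $E$-valued Gaussian random variable -- it is the same argument in substance as the paper's: both reduce the claim to the finiteness of $\sum_{k}\int_0^t |S(t-s)Qe_k|_E^2\,ds$, i.e.\ to $\E|Z(t)|_E^2<+\infty$, which is exactly what \eqref{eq:suffient-cond-for-HQZ} provides. The differences are in presentation and coverage. The paper carries out the second-moment computation by hand, expanding $Z(t)$ against an orthonormal basis $\{f_j\}$ of $E$ (this is where the Hilbert-space assumption on $E$ enters) and summing the coordinate variances via Parseval, whereas you delegate this to the standard construction of stochastic integrals of deterministic Hilbert--Schmidt-valued integrands; both are legitimate, and the paper's version makes the role of the Hilbert structure of $E$ more visible. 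On the other hand, you also verify the first requirement of Hypothesis \ref{H:QandZ}, namely $S(t)Q\in\mathcal{L}(H,E)$ for every $t>0$, via Tonelli (giving Hilbert--Schmidt regularity for almost every $t$) followed by the factorization $S(t)Q=S(t-t_0)\bigl[S(t_0)Q\bigr]$; the paper's proof is silent on this point, so your argument is the more complete one. Two small caveats on that extra step: the factorization tacitly assumes the semigroup identity extends to the range of $Q$, which need not lie in $E$ (the paper is equally informal about what space $Q$ maps into), and passing from $\sum_k|S(t_0)Qe_k|_E^2<\infty$ to boundedness of $S(t_0)Q$ on all of $H$, rather than merely on finite linear combinations of the $e_k$, implicitly identifies $S(t_0)Q$ with its continuous extension. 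Neither issue is serious in the intended examples, but both are worth a sentence of acknowledgement.
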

\begin{proof}
  We will show that under this assumption $\E|Z(t)|_E^2 <+\infty$. We assumed that $E$ was a Hilbert space, let $\{f_j\}$ be a complete orthonormal basis of $E$. Then for any $j$,
  \[\left<Z(t), f_j \right>_E = \sum_{k=1}^\infty \int_0^t \left<S(t-s)Q e_k, f_j \right>_E d\beta_k(s)\]
  which is a real-valued Gaussian random variable with variance
  \[\E \left< Z(t),f_j \right>_E^2 = \sum_{k=1}^\infty \int_0^t \left<S(t-s)Q e_k, f_j \right>^2 ds.\]
  It follows that
  \[\E |Z(t)|_E^2 = \E\sum_{j=1}^\infty \left<Z(t), f_j \right>_E^2 = \sum_{k=1}^\infty \int_0^t |S(t-s)Q e_k|_E^2 ds<+\infty \]
  and we conclude that $Z(t)$ is $E$-valued. 
\end{proof}
If $E$ is not a Hilbert space, sufficient conditions for satisfying Hypothesis \ref{H:QandZ} are not so obvious. In the next example we show that Hypothesis \ref{H:QandZ} is satisfied for a linear stochastic heat equation in one spatial dimension.
\begin{Example}[Stochastic heat equation in one spatial dimension]
  Consider the following linear stochastic heat equation for $\xi \in [0,\pi]$ and $t \in [0,+\infty]$,
  \begin{equation} \label{eq:stoch-heat}
   \left\{
   \begin{array}{l}
    \ds{\frac{\partial u}{\partial t}(\xi,t) = \frac{\partial^2 u}{\partial \xi^2}(\xi,t) + \frac{\partial w}{\partial t}(t,\xi)}\\
    \vs
    \ds{u(t,0)=u(t,\pi)=0, \ u(0,\xi) = u_0(\xi).}
   \end{array}
   \right.
  \end{equation}
  Let $E = C_0([0,\pi])$, the space of continuous real-valued functions from $[0,\pi]$ with boundary condtions $f(0)=f(\pi)=0$ for all $f \in E$, and let $H=L^2([0,\pi])$, the space of square integrable functions. The operator $\frac{\partial^2}{\partial \xi^2}$ generates a $C_0$ semigroup $S(t)$ called the heat semigroup on $E$. The spectrum of the operator $\frac{\partial^2}{\partial \xi^2}$ is the set $\{-k^2: k \in \nat\}$ and there exists a sequence of eigenfunctions
  \[e_k(\xi) := \frac{2}{\pi}\sin(k\xi), \ \frac{\partial^2 e_k}{\partial \xi^2}(\xi) = -k^2 e_k(\xi).\]
  It follows that for any $t\geq 0$, $k \in \nat$,
  \[S(t)e_k = e^{-k^2 t} e_k\]
  These eigenfunctions form a complete orthonormal system in $H$. The space-time white noise $\partial w/\partial t$ can be represented as the formal sum
  \[\partial w(\xi,t) = \sum_{k=1}^\infty e_k(\xi) d\beta_k(t).\]
  The solution to \eqref{eq:stoch-heat} is
  \[u(\xi,t) = S(t)u_0(\xi) + \int_0^t S(t-s) dw(s)(\xi).\]
  We can show that $u(\xi,t) \in E$ with probability 1 by following the arguments in \cite{DaP-Z}[Section 5.5].
\end{Example}
\begin{Remark}
  We notice that in the previous example, $Q = I$ and that the identity is not a continuous bounded operator from $H$ to $E$. However, because of the regularizing properties of the heat equation semigroup, $S(t) \in \mathcal{L}(H,E)$ for all $t>0$.
\end{Remark}
 In what follows, we will show that whenever Hypothesis \ref{H:QandZ} is satisfied, the operator $L \psi = \int_0^\infty S(s)Q\psi(s)ds$ is a compact operator.

Because the stochastic convolution $Z$ is Gaussian, it is known (see \cite[Theorems 12.4,12.5]{DaP-Z})  that $\{\sqrt{\e} Z\}_{\e >0} $ satisfies a large deviation principle with respect to the rate function
\[\tilde{I}_{0,T}(\varphi) = \frac{1}{2} \inf\{|\psi|_{L^2([0,T];H)}^2 : \varphi(t) = Z^\psi(t) \}\]
where
\[Z^\psi(t) = \int_0^t S(t-s) Q \psi(s) ds.\]
We use the standard convention that $\inf \emptyset = +\infty$. Then the following results holds.

\begin{Proposition} \label{prop:stoch-conv-LDP}
Under Hypotheses \ref{H:linear} and \ref{H:QandZ},
  \begin{enumerate}
    \item[\textit{i.}] For any $\psi \in L^2([0,T];H)$, and $\delta>0$,
    \begin{equation} \label{eq:stoch-conv-LDP-lower}
      \liminf_{\e \to 0} \e \log \left( \Pro \left|\sqrt{\e}Z - Z^\psi \right|_{C([0,T];E)} < \delta \right) \geq -\frac{1}{2} |\psi|_{L^2([0,T];H)}^2.
    \end{equation}
    \item[\textit{ii.}] For any $r>0$ and $\delta>0$,
    \begin{equation} \label{eq:stoch-conv-LDP-upper}
      \limsup_{\e \to 0} \e \log \left( \Pro \left( \dist_{C([0,T];E)}(\sqrt{\e} Z, \tilde{K}_{0,T}(r)) > \delta \right) \right) \leq -r
    \end{equation}
    where
    \[\tilde{K}_{0,T}(r) = \{Z^\psi \in C([0,T];E) : \frac{1}{2}|\psi|_{L^2([0,T];H)}^2 \leq r\}.\]
  \end{enumerate}
\end{Proposition}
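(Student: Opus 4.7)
The plan is to deduce Proposition \ref{prop:stoch-conv-LDP} from the abstract large deviations principle for centered Gaussian measures on separable Banach spaces. Since $Z$ is a centered Gaussian process in $E$ and $\sqrt{\e}Z$ is merely a rescaling of its law, the statement reduces to an instance of a Schilder-type theorem, for which I would invoke \cite[Theorems 12.4 and 12.5]{DaP-Z}. The nontrivial ingredients are verifying that the law of $Z$ defines a Gaussian measure on the correct path space and identifying its Cameron-Martin space explicitly.

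First, I would verify that $Z$ admits a version with continuous sample paths in $E$, so that its law defines a centered Gaussian Borel probability measure $\mu$ on $C([0,T];E)$. Gaussianity of finite-dimensional projections of $Z$ follows from Hypothesis \ref{H:QandZ} combined with the cylindrical Wiener construction \eqref{eq:w-def}, and continuity in time can be established by the classical factorization method, using the smoothing and exponential decay of $S(t)$ from Hypothesis \ref{H:linear}. I would then identify the Cameron-Martin space of $\mu$ via the deterministic linear operator
\[ L : L^2([0,T];H) \to C([0,T];E), \qquad (L\psi)(t) = \int_0^t S(t-s)Q\psi(s)\,ds = Z^\psi(t), \]
which is bounded (and, as announced just before the proposition, compact) under Hypotheses \ref{H:linear} and \ref{H:QandZ}. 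The standard characterization of the reproducing kernel Hilbert space of a convolution-type Gaussian process then gives $\mathcal{H}_\mu = L\bigl(L^2([0,T];H)\bigr)$ endowed with the quotient norm
\[ \|\varphi\|_{\mathcal{H}_\mu}^2 = \inf\left\{ |\psi|_{L^2([0,T];H)}^2 : L\psi = \varphi \right\} = 2\tilde{I}_{0,T}(\varphi). \]

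With this identification, the abstract theorem delivers both halves of the LDP. The upper bound \eqref{eq:stoch-conv-LDP-upper} follows by applying Fernique's theorem to the $C([0,T];E)$-norm of $Z$, together with the compactness of the level sets $\tilde{K}_{0,T}(r)$, which is inherited from the compactness of $L$; this compactness is precisely what makes $\tilde{I}_{0,T}$ a \emph{good} rate function. The lower bound \eqref{eq:stoch-conv-LDP-lower} follows from the Cameron-Martin translation formula applied to $\mu$, combined with Gaussian small-ball estimates for the shifted process centered at $Z^\psi$. The main obstacle is the verification of sample path continuity of $Z$ in the possibly non-Hilbertian Banach space $E$: pointwise Gaussianity at each $t$ is given by Hypothesis \ref{H:QandZ}, but promoting this to a genuine $C([0,T];E)$-valued Gaussian random variable is the delicate step and is where the regularization properties of $S(t)$, illustrated in the heat equation example, really enter.
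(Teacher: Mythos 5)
Your proposal is correct and takes essentially the same route as the paper, which gives no proof of this proposition at all but simply cites \cite[Theorems 12.4, 12.5]{DaP-Z} for the Schilder-type LDP of the Gaussian stochastic convolution; your identification of the Cameron--Martin space as the range of $L$ with the quotient norm, and your observation that path continuity of $Z$ in $E$ must be verified separately from Hypothesis \ref{H:QandZ}, are precisely the ingredients that citation presupposes.
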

From this LDP for the stochastic convolution we can immediately derive a uniform LDP for \eqref{eq:linear}.
Let $X^\psi_x$ solve the deterministic control problem
\begin{equation}
  \left\{
  \begin{array}{l}
    \ds{\frac{d}{dt}X^\psi_x(t) = AX^\psi_x(t) + Q \psi(t)}\\
    \vs
    \ds{X^\psi_x(0) = x.}
  \end{array}
  \right.
\end{equation}
That is
\[X^\psi_x(t) = S(t)x + Z^\psi(t).\]
Let $I_{0,T}: C([0,T];E) \to [0,+\infty]$ be given by
\[I_{0,T}(\varphi) = \inf\left\{\frac{1}{2} |\psi|_{L^2((0,T);H)}^2 : \varphi= X^\psi_x \right\}.\]
\begin{Proposition} \label{prop:linear-LDP}
Under Hypothesis \ref{H:linear} and \ref{H:QandZ}, the family $\{X^\e_x\}_{\e>0}$ satisfies a uniform LDP with respect to $I_{0,T}$. That is,
  \begin{enumerate}
    \item[\textit{i.}] For any $\psi \in L^2([0,T];H)$, $\delta>0$,
    \begin{equation} \label{eq:linear-LDP-lower}
      \liminf_{\e \to 0} \e \log \left( \inf_{x \in E} \Pro \left|X^\e_x - X^\psi_x \right|_{C([0,T];E)} < \delta \right) \geq -\frac{1}{2} |\psi|_{L^2([0,T];H)}^2.
    \end{equation}
    \item[\textit{ii.}] For any $r>0$ and $\delta>0$,
    \begin{equation} \label{eq:linear-LDP-upper}
      \limsup_{\e \to 0} \e \log \left( \sup_{x \in E} \Pro \left( \dist_{C([0,T];E)}(X^\e_x, {K}^x_{0,T}(r)) > \delta \right) \right) \leq -r
    \end{equation}
    where
    \[{K}^x_{0,T}(r) = \{X^\psi_x \in C([0,T];E) : \frac{1}{2}|\psi|_{L^2([0,T];H)}^2 \leq r\}.\]
  \end{enumerate}
\end{Proposition}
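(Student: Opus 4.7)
The plan is to exploit the crucial simplification that occurs in the linear additive noise setting: both the stochastic solution and the controlled solution depend on the initial condition $x$ only through the deterministic term $S(t)x$, which cancels out when we take differences.

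First I would observe that $X^\e_x(t) - X^\psi_x(t) = \sqrt{\e} Z(t) - Z^\psi(t)$ for every $t \in [0,T]$ and every $x \in E$. Consequently the $C([0,T];E)$-norm
\[\bigl|X^\e_x - X^\psi_x\bigr|_{C([0,T];E)} = \bigl|\sqrt{\e} Z - Z^\psi\bigr|_{C([0,T];E)}\]
is independent of $x$. Therefore, for any fixed $\psi \in L^2([0,T];H)$ and $\delta > 0$,
\[\inf_{x \in E} \Pro\!\left(\bigl|X^\e_x - X^\psi_x\bigr|_{C([0,T];E)} < \delta\right) = \Pro\!\left(\bigl|\sqrt{\e} Z - Z^\psi\bigr|_{C([0,T];E)} < \delta\right),\]
and the lower bound \eqref{eq:linear-LDP-lower} is an immediate consequence of the corresponding lower bound \eqref{eq:stoch-conv-LDP-lower} in Proposition \ref{prop:stoch-conv-LDP}.

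For the upper bound, I would use the same cancellation of $S(t)x$. Since $K^x_{0,T}(r) = \{S(\cdot)x + Z^\psi : \frac{1}{2}|\psi|_{L^2}^2 \leq r\}$, we can write
\[\dist_{C([0,T];E)}\!\bigl(X^\e_x,\, K^x_{0,T}(r)\bigr) = \inf_{\frac{1}{2}|\psi|_{L^2}^2 \leq r} \bigl|\sqrt{\e}Z + S(\cdot)x - S(\cdot)x - Z^\psi\bigr|_{C([0,T];E)} = \dist_{C([0,T];E)}\!\bigl(\sqrt{\e}Z,\, \tilde{K}_{0,T}(r)\bigr),\]
which again is independent of $x$. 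Taking the supremum over $x \in E$ is therefore trivial, and the upper bound \eqref{eq:linear-LDP-upper} follows directly from \eqref{eq:stoch-conv-LDP-upper}.

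There is essentially no obstacle here; the whole point of isolating the linear additive case is that the uniformity claim collapses to the non-uniform LDP for the stochastic convolution. The real work — establishing that $\{\sqrt{\e} Z\}$ satisfies a large deviations principle on $C([0,T];E)$, which requires Gaussianity of $Z$ together with the continuity provided by Hypotheses \ref{H:linear} and \ref{H:QandZ} — has already been absorbed into Proposition \ref{prop:stoch-conv-LDP}. The uniform LDP in this section should be viewed mainly as a template: it is the nonlinear and multiplicative-noise cases treated in later sections, where this cancellation no longer holds, that will require genuinely new arguments.
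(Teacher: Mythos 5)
Your proposal is correct and takes exactly the same route as the paper: the observation that $X^\e_x - X^\psi_x = \sqrt{\e}Z - Z^\psi$ is independent of $x$, so both bounds reduce immediately to Proposition \ref{prop:stoch-conv-LDP}. You spell out the reduction for the distance to the level set slightly more explicitly than the paper does, but the argument is identical.
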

\begin{proof}
  This is a consequence of the fact that $X^\e_x(t) = S(t)x + \sqrt{\e}Z(t)$ and $X^\psi_x(t) = S(t)x + Z^\psi(t)$. Therefore, for any $ x \in E$,
  \[|X^\e_x - X^\psi_x|_{C([0,T];E)} = |\sqrt{\e}Z - Z^\psi|_{C([0,T];E)}.\]
\end{proof}
Now, we extend $I_{0,T}$ to the negative half line by translation, defining for any $\varphi \in C([-T,0];E)$
\[I_{-T,0}(\varphi) = I_{0,T}(\varphi(T + \cdot)),\]
\[I_{-\infty,0}(\varphi) = \sup_{T>0} I_{-T,0}(\varphi).\]
If $I_{-\infty,0}(\varphi)<+\infty$, then there exists $\psi \in {L^2((-\infty,0);H)}$ such that for $-T \leq t \leq 0$
\[\varphi(t) = S(t+T) \varphi(-T) + \int_{-T}^t S(t-s) Q \psi(s) ds.\]
If $\sup_{t \leq 0} |\varphi(t)|< +\infty$, then by taking $-T \to -\infty$,
\[\varphi(t) = \int_{-\infty}^t S(t-s) Q \psi(s) ds.\]
\begin{Definition}
  We define the quasipotential for $N \subset E$ as
  \begin{equation}
    \begin{array}{l}
    \ds{V(N):= \inf\left\{I_{-\infty,0}(\varphi): \lim_{t \to -\infty} |\varphi(t)|_E = 0, \ \varphi(0) \in N \right\}}\\
     \vs
    \ds{=\inf\left\{ \frac{1}{2} |\psi|_{L^2((-\infty,0);H)}^2 : \int_{-\infty}^0 S(-s)Q \psi(s) ds \in N \right\}.}
    \end{array}
  \end{equation}
\end{Definition}
We now prove that for any $r>0$, the level set of $I_{-\infty,0}$,
\begin{equation} \label{eq:linear-K}
  \mathcal{K}(r) = \left\{\varphi \in C((-\infty,0);E): \lim_{t \to-\infty}|\varphi(t)|_E = 0, \ I_{-\infty,0}(\varphi)\leq r  \right\},
\end{equation}
is compact in the topology of uniform convergence on bounded intervals of $(-\infty,0]$.

\begin{Definition} \label{def:L_t}
  Define for any $t \geq 0$, $L_t: L^2([0,t];H) \to E$
  \begin{equation}
    L_t \psi = \int_0^t S(s) Q \psi(s) ds.
  \end{equation}
\end{Definition}
\begin{Theorem} \label{thm:L_t-compact}
  For any fixed $t\geq0$, the operator $L_t:  L^2([0,t];H) \to E$ is a compact operator.
\end{Theorem}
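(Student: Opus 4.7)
The plan is to recognize $L_t$ as the Cameron--Martin factor of the Gaussian law of the stochastic convolution $Z(t)$ at time $t$, and then to deduce compactness from the classical fact that the Cameron--Martin embedding of any Radon Gaussian measure on a Banach space is compact. The case $t=0$ is trivial, so I fix $t>0$. Note also that by the change of variables $s\mapsto t-s$ the operator $L_t\psi=\int_0^t S(s)Q\psi(s)\,ds$ has the same range as $\tilde L_t\psi=\int_0^t S(t-s)Q\psi(s)\,ds$, which is precisely the deterministic analogue of $Z(t)$, so compactness of either operator is equivalent.

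First I would compute the adjoint $L_t^\star:E^\star\to L^2([0,t];H)$. By Hypothesis \ref{H:QandZ}, $S(s)Q\in\mathcal{L}(H,E)$ for every $s>0$, and a short duality calculation gives
\[(L_t^\star x^\star)(s)=(S(s)Q)^\star x^\star,\]
where $(S(s)Q)^\star:E^\star\to H$ is the Banach adjoint of the bounded operator $S(s)Q$. Consequently the composition $L_tL_t^\star:E^\star\to E$ coincides with the covariance operator of the centered Gaussian random variable $Z(t)$, since a direct computation using the Itô isometry yields
\[\E\left<Z(t),x^\star\right>_{E,E^\star}\left<Z(t),y^\star\right>_{E,E^\star}=\int_0^t\left<(S(s)Q)^\star x^\star,(S(s)Q)^\star y^\star\right>_H ds=\left<L_tL_t^\star x^\star,y^\star\right>_{E,E^\star}.\]

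Next I would invoke the general theory of Gaussian measures on Banach spaces. Because Hypothesis \ref{H:QandZ} asserts that $Z(t)$ is an $E$-valued Gaussian random variable, its law $\mu_t$ is a centered Radon Gaussian measure on $E$. The reproducing-kernel (Cameron--Martin) Hilbert space $\H_{\mu_t}$ of $\mu_t$ then admits the standard factorization description, via the identification $L_tL_t^\star=\Sigma_{\mu_t}$, as the image $L_t(L^2([0,t];H))$ equipped with the quotient norm $|h|_{\H_{\mu_t}}=\inf\{|\psi|_{L^2([0,t];H)}:L_t\psi=h\}$. In particular, the closed unit ball of $\H_{\mu_t}$ coincides with $L_t(\bar B_{L^2([0,t];H)}(0,1))$.

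The proof is then concluded by the classical theorem that the closed unit ball of the Cameron--Martin space of a Radon Gaussian measure on a Banach space is compact in the ambient Banach norm (see, e.g., the treatment in \cite{DaP-Z}, or Bogachev's monograph \emph{Gaussian Measures}). This immediately yields compactness of $L_t(\bar B_{L^2([0,t];H)}(0,1))$ in $E$, and hence of $L_t$. The main subtlety, and the precise place where Hypothesis \ref{H:QandZ} does the real work, is to guarantee the Radonness of $\mu_t$; a naive attempt to establish compactness by finite-rank truncation in a basis of $H$ fails because the scalar-to-$E$ integrals $\phi\mapsto\int_0^t\phi(s)S(s)Qe_k\,ds$ are only bounded, not compact, in general. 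Once Radonness is in hand, however, the general Gaussian-measure machinery applies even though $E$ is only Banach and $Q$ need not be bounded from $H$ into $E$.
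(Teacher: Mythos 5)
Your proposal is correct, and it rests on the same underlying fact as the paper's proof --- that Hypothesis \ref{H:QandZ} (the $E$-valuedness of the Gaussian random variable $Z(t)$) is what forces compactness --- but the execution is different. You identify $L_tL_t^\star$ with the covariance operator of the law $\mu_t$ of $Z(t)$, observe that the closed unit ball of the Cameron--Martin space of $\mu_t$ is $L_t(\bar B_{L^2([0,t];H)})$, and then cite the classical theorem that the Cameron--Martin ball of a Radon Gaussian measure is compact in the ambient Banach norm. The paper instead proves the statement from scratch: it computes the characteristic function $\hat\mu(x^\star)=e^{-\frac12|L_t^\star x^\star|^2}$, shows via Banach--Alaoglu and dominated convergence that $L_t^\star$ maps weak-$\star$ convergent bounded sequences in $E^\star$ to norm-convergent sequences in $L^2([0,t];H)$ (i.e.\ $L_t^\star$ is compact), and concludes by Schauder's theorem. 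In effect, the paper's argument is the standard proof of the very Cameron--Martin compactness theorem you invoke, specialized to this covariance factorization; your version trades self-containedness for a cleaner appeal to the general theory. Two minor points: the Radonness you flag as the main subtlety is automatic once $E$ is separable (implicit throughout the paper) and $Z(t)$ is $E$-valued, so it does not require separate work; and your reduction from $L_t$ to $\tilde L_t$ via $\psi\mapsto\psi(t-\cdot)$ is a unitary conjugation on $L^2([0,t];H)$, which is fine but also unnecessary, since the covariance identity you write already involves $L_t$ directly after the change of variables in the time integral.
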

\begin{proof}
  By Hypothesis \ref{H:QandZ}, $Z(t)$ is an $E$-valued Gaussian random variable. Therefore,
  \[\left<Z(t),x^\star\right>_{E,E^\star} = \sum_{k=1}^\infty \int_0^t \left<S(t-s)Q e_k, x^\star \right>_{E,E^\star} d\beta_k(s)\]
  is a real-valued Gaussian random variable. Its variance is
  \[\begin{array}{l}
  \ds{\E \left<Z(t),x^\star\right>_{E,E^\star}^2  = \sum_{k=1}^\infty \int_0^t \left<S(t-s)Q e_k, x^\star \right>_{E,E^\star}^2 ds}\\
  \vs
  \ds{= \sum_{k=1}^\infty \int_0^t \left<e_k, (S(t-s)Q)^\star x^\star \right>_H^2 ds = \int_0^t |(S(t-s)Q)^\star x^\star|_H^2 ds = |L_t^\star x^\star|_{L^2([0,t];H)}^2.}
  \end{array}\]
  We calculate the characteristic function of $Z(t)$. For any $x^\star \in E^\star$ let
  \begin{equation} \label{eq:characteristic-fct}
    \hat{\mu}(x^\star) := \E \left(e^{i \left<Z(t),x^\star \right>_{E,E^\star}} \right) = e^{-\frac{1}{2}|L_t^\star x^\star|_{L^2([0,t];H)}^2}.
  \end{equation}
  We now prove that $L_t^\star$ is a compact operator. Let $|x_n^\star|_{E^\star} \leq 1$ be a sequence in the dual space. By the Banach-Alaoglu theorem, the unit ball in $E^\star$ is weak-$\star$ compact and therefore there exists a subsequence $x_{n_k}^\star$ with the weak-$\star$ limit $x^\star$. Because $Z(t)$ is assumed to be $E$-valued with probability $1$, by the dominated convergence theorem,
  \[\lim_{k \to +\infty} \hat{\mu}(x_{n_k}^\star - x^\star) = \lim_{k \to +\infty} \E \left(e^{i \left<Z(t), x_{n_k}^\star - x^\star\right>_{E,E^\star}} \right) = 1.\]
  It follows from \eqref{eq:characteristic-fct} that
  \[\lim_{k \to +\infty}|L_t^\star(x_{n_k}^\star -  x^\star)|_{L^2([0,t];H)}^2 = 0\]
  and therefore $L_t^\star$ is a compact operator. By Schauder's Theorem (see, for example, \cite[Theorem VI.3.4]{conway}) $L_t: L^2([0,t];H) \to E$ is also a compact operator.
\end{proof}
We now recall a convenient fact about compact operators
\begin{Lemma} \label{lem:weak->strong}
  If $\mathcal{X}$ and $\mathcal{Y}$ are Banach spaces and $\Lambda: \mathcal{X} \to \mathcal{Y}$ is a compact operator, then for any sequence $\{x_n\} \subset E$ which converges weakly to $x$ in $\mathcal{X}$, we have $|\Lambda(x_n - x)|_{\mathcal{Y}} \to 0$.
\end{Lemma}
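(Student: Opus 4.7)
The plan is to reduce the statement to the standard subsequence-of-subsequence argument, using that a compact operator is in particular bounded (hence continuous in norm, and therefore continuous from the weak topology on $\mathcal{X}$ to the weak topology on $\mathcal{Y}$).

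First I would replace $x_n$ by $y_n = x_n - x$, so that $y_n \rightharpoonup 0$ weakly in $\mathcal{X}$, and the goal becomes $|\Lambda y_n|_{\mathcal{Y}} \to 0$. By the uniform boundedness principle, the weakly convergent sequence $\{y_n\}$ is norm-bounded in $\mathcal{X}$, say $|y_n|_{\mathcal{X}} \leq M$ for some $M > 0$.

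Next I would argue by contradiction: suppose $|\Lambda y_n|_{\mathcal{Y}} \not\to 0$. Then there exists $\eta > 0$ and a subsequence, still denoted $\{y_n\}$, with $|\Lambda y_n|_{\mathcal{Y}} \geq \eta$ for every $n$. Since $\Lambda$ is compact and $\{y_n\}$ is bounded, $\{\Lambda y_n\}$ has a norm-convergent subsequence $\{\Lambda y_{n_k}\}$ with some limit $z \in \mathcal{Y}$ satisfying $|z|_{\mathcal{Y}} \geq \eta$. On the other hand, $\Lambda$ is bounded and therefore continuous from the weak topology of $\mathcal{X}$ to the weak topology of $\mathcal{Y}$ (for any $y^\star \in \mathcal{Y}^\star$ we have $\left<\Lambda y_{n_k}, y^\star\right>_{\mathcal{Y},\mathcal{Y}^\star} = \left<y_{n_k}, \Lambda^\star y^\star\right>_{\mathcal{X},\mathcal{X}^\star} \to 0$ since $\Lambda^\star y^\star \in \mathcal{X}^\star$ and $y_{n_k} \rightharpoonup 0$). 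Hence $\Lambda y_{n_k} \rightharpoonup 0$ weakly. Norm convergence implies weak convergence, so by uniqueness of the weak limit we must have $z = 0$, contradicting $|z|_{\mathcal{Y}} \geq \eta$.

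There is no real obstacle: the argument is entirely standard and uses nothing beyond (i) Banach--Steinhaus to get boundedness of $\{y_n\}$, (ii) the definition of a compact operator to extract a norm-convergent subsequence, and (iii) the weak-to-weak continuity of any bounded linear map to identify the limit. If a more direct presentation is preferred, one can phrase it as: every subsequence of $\{\Lambda y_n\}$ has a further subsequence converging in norm, and every such subsequential limit equals $0$ by the weak-continuity argument, so the whole sequence converges to $0$ in norm.
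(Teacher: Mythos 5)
Your proof is correct and takes essentially the same route as the paper: weak-to-weak continuity of $\Lambda$ identifies the only possible limit, and compactness forces every subsequence of $\{\Lambda(x_n-x)\}$ to have a norm-convergent further subsequence, so the whole sequence converges in norm to that weak limit. Your version is slightly more careful in explicitly invoking Banach--Steinhaus to get norm-boundedness of the weakly convergent sequence, a step the paper leaves implicit.
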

\begin{proof}
  It is clear that $\Lambda x_n \to \Lambda x$ weakly. This is because for any $x^\star \in \mathcal{X}^\star$,
  \[\left<\Lambda x_n, x^\star \right>_{\mathcal{X},\mathcal{X}^\star} = \left< x_n, \Lambda^\star x^\star\right>_{\mathcal{Y},\mathcal{Y}^\star} \to \left< \Lambda x, x^\star\right>_{\mathcal{X},\mathcal{X}^\star}.\]
  But the compactness of $\Lambda$ implies that every subsequence of $\Lambda x_n$ has a further subsequence that converges in $\mathcal{Y}$ norm. This limit must coincide with the weak limit $\Lambda x$.
\end{proof}
\begin{Corollary} \label{cor:L_t->0}
 For any $t>0$, the operator $L_t: L^2([0,t];H) \to E$ is bounded and
 \begin{equation}
   \lim_{t \to 0} \|L_t\|_{\mathcal{L}(L^2([0,t];H),E)} =0.
 \end{equation}
\end{Corollary}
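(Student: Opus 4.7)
The plan is to deduce boundedness immediately from Theorem \ref{thm:L_t-compact} (compact operators between Banach spaces are bounded), and to establish the norm convergence $\|L_t\| \to 0$ by a contradiction argument that combines compactness of a single operator $L_T$ with a zero-extension trick.

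Suppose, for contradiction, that $\|L_t\|_{\mathcal{L}(L^2([0,t];H),E)}$ does \emph{not} tend to $0$ as $t \to 0$. Then there exists $\delta > 0$ and a sequence $t_n \downarrow 0$ together with $\psi_n \in L^2([0,t_n];H)$ with $|\psi_n|_{L^2([0,t_n];H)} = 1$ and $|L_{t_n} \psi_n|_E \geq \delta$ for all $n$. Fix $T > 0$ large enough that $t_n \leq T$ for every $n$, and extend each $\psi_n$ by zero on $(t_n, T]$ to obtain $\tilde{\psi}_n \in L^2([0,T];H)$ with $|\tilde{\psi}_n|_{L^2([0,T];H)} = 1$. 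Since $\tilde{\psi}_n$ vanishes on $(t_n, T]$, one has
\[
L_T \tilde{\psi}_n = \int_0^T S(s) Q \tilde{\psi}_n(s)\,ds = \int_0^{t_n} S(s) Q \psi_n(s)\,ds = L_{t_n} \psi_n,
\]
so $|L_T \tilde{\psi}_n|_E \geq \delta$ for all $n$.

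Next I will show $\tilde{\psi}_n \rightharpoonup 0$ weakly in $L^2([0,T];H)$. For any test function $\phi \in L^2([0,T];H)$, the Cauchy--Schwarz inequality yields
\[
\bigl|\langle \tilde{\psi}_n, \phi\rangle_{L^2([0,T];H)}\bigr|
= \Bigl|\int_0^{t_n}\langle \psi_n(s),\phi(s)\rangle_H\,ds\Bigr|
\leq |\psi_n|_{L^2([0,t_n];H)}\,|\phi|_{L^2([0,t_n];H)}
= |\phi|_{L^2([0,t_n];H)},
\]
and the right-hand side tends to $0$ as $t_n \to 0$ by absolute continuity of the Lebesgue integral. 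Hence $\tilde{\psi}_n \rightharpoonup 0$ weakly.

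Now I apply Theorem \ref{thm:L_t-compact} to the operator $L_T$: it is compact, so by Lemma \ref{lem:weak->strong}, $|L_T \tilde{\psi}_n|_E \to 0$. This contradicts $|L_T \tilde{\psi}_n|_E \geq \delta$ and proves the result. The only mildly delicate point is the weak convergence step, which is where the zero-extension is essential — without it, the sequence would live in varying spaces $L^2([0,t_n];H)$ and one could not invoke compactness of a single operator; I expect no other substantial obstacle.
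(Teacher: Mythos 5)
Your proposal is correct and follows essentially the same route as the paper: both rely on the zero-extension identity $L_t\psi = L_T(\psi\mathbbm{1}_{[0,t]})$, the Cauchy--Schwarz argument showing the extended near-maximizers converge weakly to $0$, and compactness of the single operator $L_T$ (the paper takes $T=1$) together with Lemma \ref{lem:weak->strong}. The only difference is presentational — you argue by contradiction along a sequence $t_n \downarrow 0$, while the paper bounds $\|L_t\|$ directly by $t + |L_1(\psi^t\mathbbm{1}_{[0,t]})|_E$ — which does not change the substance.
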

\begin{proof}
  First, we notice that for $0<t<1$,
  \[L_{t} \psi = \int_0^t S(r)Q\psi(r) = \int_0^{1} S(r)Q \psi(r) \mathbbm{1}_{r < t} dr =  L_1( \psi(\cdot)\mathbbm{1}_{\cdot< t}). \]

  Let $\psi^t \in L^2([0,t];H)$ be such that $|\psi^t|_{L^2([0,t];H)} = 1$ and
  \begin{equation}
    |L_t \psi^t|_E > \|L_t\|_{\mathcal{L}(L^2([0,t];H),E)} - t.
  \end{equation}
  For any function $\phi \in L^2([0,1];H)$,
  \[\left|\int_0^t \left< \psi^t(s), \phi(s) \right>_H ds\right| \leq |\psi^t|_{L^2([0,t];H)}|\phi|_{L^2([0,t];H)} \stackrel{t \to 0}{\to} 0.  \]
  Therefore $\psi^t(\cdot) \mathbbm{1}_{\{\cdot \leq t\}}$ converges to $0$ weakly in $L^2([0,1];H)$.
  By Theorem \ref{thm:L_t-compact} and Lemma \ref{lem:weak->strong}.
  \[\|L_t\|_{\mathcal{L}(L^2([0,t];H),E)} \leq t+ |L_t \psi^t|_E = t + | L_1 \psi^t ( \mathbbm{1}_{\{\cdot< t\}})| \to 0\]
  Therefore, the conclusion follows.
\end{proof}
\begin{Remark}
  At first glance, Corollary \ref{cor:L_t->0} may seem obvious. In actuality, it is a consequence of compactness. There are certain noncompact operators $L_t$ for which $\|L_t\|_{\mathcal{L}(L^2([0,t];H),E)}$ does not converge to $0$. As an example, consider the operator $Q: H \to H$, $Qe_k = k e_k$ and the semigroup $S(t): H \to H$, $S(t)e_k = e^{-k^2t}e_k$. Then the operator
  \[L_t \psi = \int_0^t S(s)Q \psi(s)ds\]
  is a bounded operator, and for any fixed $\psi$,
   \[\lim_{t \to 0} |L_t \psi|_H = 0.\]
   However, $\|L_t\|_{\mathcal{L}(L^2([0,t]H),H)} = \frac{1}{\sqrt{2}}$ for all $t>0$. Of course, such an $L_t$ cannot be the quadratic variation of a Gaussian random variable because it is not compact.
\end{Remark}
\begin{Theorem} \label{thm:L-compact}
  The operator $L: L^2((0,+\infty,0);H) \to E$ defined by
  \begin{equation} \label{eq:L-def}
   L \psi = \int_0^\infty S(t) Q \psi(t) dt
  \end{equation}
  is compact.
\end{Theorem}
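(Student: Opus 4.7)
The plan is to write $L$ as the operator norm limit of a sequence of compact operators. For each integer $N\geq 1$, let $L^{(N)} \psi = \int_0^N S(t) Q \psi(t) dt$. Since the restriction map $\psi \mapsto \psi|_{[0,N]}$ from $L^2((0,+\infty);H)$ to $L^2([0,N];H)$ is bounded with norm one, and $L^{(N)} = L_N \circ (\psi \mapsto \psi|_{[0,N]})$, the operator $L^{(N)}$ is compact by Theorem \ref{thm:L_t-compact}. Since the compact operators form a norm-closed subspace of $\mathcal{L}(L^2((0,+\infty);H),E)$, it suffices to show that $\|L - L^{(N)}\|_{\mathcal{L}(L^2((0,+\infty);H),E)} \to 0$ as $N \to +\infty$.

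To control the tail, the key idea is to partition $(N,+\infty)$ into unit intervals and use the semigroup property together with the exponential decay provided by Hypothesis \ref{H:linear}. For each integer $n \geq 0$, the change of variable $s = t - n$ and the identity $S(t) = S(n)S(t-n)$ give
\[
\int_n^{n+1} S(t) Q \psi(t) dt = S(n) \int_0^1 S(s) Q \psi(s+n) ds = S(n) L_1(\tau_n \psi),
\]
where $\tau_n\psi(s) := \psi(s+n)$ satisfies $|\tau_n \psi|_{L^2([0,1];H)} = |\psi|_{L^2([n,n+1];H)}$. Therefore
\[
\left| \int_n^{n+1} S(t)Q\psi(t) dt \right|_E \leq M e^{-\omega n} \|L_1\|_{\mathcal{L}(L^2([0,1];H),E)} |\psi|_{L^2([n,n+1];H)}.
\]
Summing over $n \geq N$ and applying the Cauchy-Schwarz inequality yields
\[
|L\psi - L^{(N)} \psi|_E \leq M \|L_1\|_{\mathcal{L}(L^2([0,1];H),E)} \left( \sum_{n=N}^\infty e^{-2\omega n} \right)^{1/2} |\psi|_{L^2((0,+\infty);H)},
\]
which in particular shows that $L$ is well-defined and bounded, and that $\|L - L^{(N)}\|_{\mathcal{L}} \to 0$ as $N \to +\infty$.

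I do not expect any serious obstacle here: Theorem \ref{thm:L_t-compact} supplies the compactness on every finite horizon, and Hypothesis \ref{H:linear} forces the tail to decay geometrically, so the only subtlety is arranging the estimate so that the operator norm (rather than pointwise in $\psi$) actually converges. The unit-interval decomposition combined with Cauchy-Schwarz is precisely what is needed to turn the exponential decay of $\|S(n)\|_{\mathcal{L}(E)}$ into a uniform bound on the tail, and then compactness of $L$ follows from the standard fact that a uniform limit of compact operators is compact.
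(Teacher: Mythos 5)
Your proof is correct. It rests on the same two pillars as the paper's argument --- the compactness of the finite-horizon operators $L_t$ from Theorem \ref{thm:L_t-compact} and the unit-interval decomposition $\int_n^{n+1}S(t)Q\psi(t)\,dt = S(n)L_1(\psi(n+\cdot))$ combined with the exponential decay of Hypothesis \ref{H:linear} --- but you conclude compactness by a different mechanism. The paper argues sequentially: it takes a bounded sequence $\psi_n$, extracts a weakly convergent subsequence via Alaoglu, uses Lemma \ref{lem:weak->strong} to get $L_1(\psi_n(j+\cdot)) \to L_1(\psi(j+\cdot))$ strongly for each $j$, and then sums the series (implicitly invoking dominated convergence for the series, since each term is bounded by $2Me^{-\omega j}\|L_1\|$). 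You instead exhibit $L$ as the operator-norm limit of the compact truncations $L^{(N)} = L_N\circ(\psi\mapsto\psi|_{[0,N]})$, with the Cauchy--Schwarz step giving the uniform tail bound $\|L-L^{(N)}\|\leq M\|L_1\|\bigl(\sum_{n\geq N}e^{-2\omega n}\bigr)^{1/2}$. Your route is arguably cleaner: it avoids the exchange of limit and infinite sum, it yields an explicit quantitative rate for the tail, and it delivers the boundedness of $L$ as an immediate by-product (a fact the paper uses later, e.g.\ in the proof of Theorem \ref{thm:LDP}). The paper's sequential formulation, on the other hand, is the one that gets reused almost verbatim inside the proof of Theorem \ref{thm:linear-K-compact}, where one needs the weak-to-strong continuity of $L$ itself rather than just its compactness.
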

\begin{proof}
  This is a consequence of Theorem \ref{thm:L_t-compact} and the fact that $S(t)$ is of negative type. Notice that
  \[L \psi = \sum_{j=0}^\infty S(j) \int_{j}^{j+1} S(s-j) Q \psi(s) ds = \sum_{n=0}^\infty S(j) L_1 \psi(j+\cdot). \]
  For any sequence $|\psi_n|_{L^2((0,+\infty);H)} \leq 1$, there exists, by Alaoglu's theorem, a subsequence (which we relabel $\psi_n$) such that $\psi_n \to \psi$ weakly in $L^2((0,+\infty);H)$. Any translate $\psi_n(j+\cdot)$ also converges weakly to $\psi(j + \cdot)$. To see this, take any function $\phi \in L^2((0,+\infty);H)$ and notice that
  \[\int_0^\infty \left<\psi_n(j+s) - \psi(j+s), \phi(s) \right>_H ds = \int_0^\infty \left<\psi_n(s) - \psi(s), \phi(s-j) \chi_{\{s\geq j\}}\right>_H ds \to 0. \]
  Therefore, by Lemma \ref{lem:weak->strong}, for any $j$
  \[L_1 \psi_n(j+\cdot) \to L_1 \psi(j + \cdot).\]
  Our conclusion follows because
  \[\left|L(\psi_n -\psi)\right|_E \leq M\sum_{j=0}^\infty e^{-\omega j} |L_1(\psi_n(j+\cdot) - \psi(j+\cdot))|_E. \]
\end{proof}
\begin{Theorem} \label{thm:linear-K-compact}
  Under Hypothesis \ref{H:linear} and \ref{H:QandZ}, for any $r >0$, the set
  $\mathcal{K}(r)$ defined in \eqref{eq:linear-K}
  is compact in the topology of uniform convergence on compact sets of $(-\infty,0]$.
\end{Theorem}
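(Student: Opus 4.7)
The plan is to view each $\varphi\in\mathcal{K}(r)$ as the image of a control under the operator $L$ of Theorem \ref{thm:L-compact}, then combine its compactness with Corollary \ref{cor:L_t->0} to extract uniform limits. Any $\varphi\in\mathcal{K}(r)$ admits a representation $\varphi(t)=\int_{-\infty}^t S(t-s)Q\psi(s)\,ds$ with $\tfrac12|\psi|_{L^2((-\infty,0);H)}^2$ arbitrarily close to $I_{-\infty,0}(\varphi)\le r$. Changing variables $u=t-s$ this becomes $\varphi(t)=L\tau_t\psi$, where $\tau_t\psi(u):=\psi(t-u)\mathbf{1}_{u>0}$ satisfies $|\tau_t\psi|^2_{L^2((0,\infty);H)}=\int_{-\infty}^t|\psi(s)|_H^2\,ds$, which is uniformly bounded by $|\psi|^2_{L^2}$ and tends to $0$ as $t\to-\infty$.

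Given $(\varphi_n)\subset\mathcal{K}(r)$, I pick controls $\psi_n$ with $\tfrac12|\psi_n|^2\le r+1/n$. Banach--Alaoglu in $L^2((-\infty,0);H)$ gives a subsequence, still denoted $\psi_n$, with $\psi_n\rightharpoonup\psi$ weakly, and $\tfrac12|\psi|^2\le r$ by weak lower semicontinuity of the norm. Define $\varphi(t):=L\tau_t\psi$. A straightforward change of variables shows $\tau_t\psi_n\rightharpoonup\tau_t\psi$ in $L^2((0,\infty);H)$ for each fixed $t\le 0$, so by Lemma \ref{lem:weak->strong} and the compactness of $L$ I obtain pointwise convergence $\varphi_n(t)\to\varphi(t)$ in $E$.

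To promote this to uniform convergence on each $[-T,0]$ I show $\{\varphi_n\}$ is equicontinuous. For $-T\le t_1<t_2\le 0$ the semigroup identity gives
\[\varphi_n(t_2)-\varphi_n(t_1)=\bigl(S(t_2-t_1)-I\bigr)\varphi_n(t_1)+\int_{t_1}^{t_2}S(t_2-s)Q\psi_n(s)\,ds.\]
The $E$-norm of the second term is at most $\|L_{t_2-t_1}\|_{\mathcal{L}(L^2([0,t_2-t_1];H),E)}\sqrt{2r+1}$, which vanishes as $t_2-t_1\to 0$ by Corollary \ref{cor:L_t->0}. For the first term, the set $\{\varphi_n(t_1):n\in\mathbb{N},\,t_1\in[-T,0]\}$ is the image under $L$ of a norm-bounded family in $L^2((0,\infty);H)$, hence relatively compact in $E$ by Theorem \ref{thm:L-compact}; strong continuity of $S$ then gives $\sup_{\xi\in K'}|S(h)\xi-\xi|_E\to 0$ as $h\to 0$ on that compact set $K'$. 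Equicontinuity plus pointwise convergence yields uniform convergence on $[-T,0]$, i.e.\ $\rho(\varphi_n,\varphi)\to 0$. Finally $\varphi\in\mathcal{K}(r)$: boundedness of $L$ gives $|\varphi(t)|_E\le\|L\|\cdot|\tau_t\psi|_{L^2}\to 0$ as $t\to-\infty$, and $I_{-\infty,0}(\varphi)\le\tfrac12|\psi|^2\le r$ by construction.

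The main obstacle is the equicontinuity step: because $E$ is infinite dimensional, a norm-bounded family of $E$-valued maps is not automatically equicontinuous in the semigroup direction, and it is precisely the two consequences of compactness ---the short-time vanishing of $\|L_t\|$ (Corollary \ref{cor:L_t->0}) together with the fact that $L$ sends bounded sets to relatively compact ones (Theorem \ref{thm:L-compact})--- that simultaneously control the short-time integral and allow strong continuity of $S(\cdot)$ to be used on a compact subset of $E$.
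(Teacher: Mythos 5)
Your proof is correct and follows essentially the same route as the paper: weak compactness of the controls plus compactness of $L$ (Theorem \ref{thm:L-compact} and Lemma \ref{lem:weak->strong}) for pointwise convergence, the same semigroup decomposition of $\varphi_n(t_2)-\varphi_n(t_1)$ for equicontinuity, and Arzela--Ascoli to conclude. The only differences are cosmetic --- you invoke Corollary \ref{cor:L_t->0} directly for the short-interval term where the paper reruns the underlying weak-convergence argument, and you explicitly verify that the limit lies in $\mathcal{K}(r)$, which the paper leaves implicit.
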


\begin{proof}
  Given any sequence
  \[\Psi_n(t) = \int_{-\infty}^t S(t-s) Q \psi_n(s) ds\]
  with $\frac{1}{2} |\psi_n|_{L^2((-\infty,0))}^2 \leq r$,
  we will show that there exists a subsequence which is convergent in $C((-\infty,0);E)$.
  By a time change, we can write
  \[\Psi_n(t) = \int_0^\infty S(s)Q \psi_n(t-s) ds = L \psi_n(t-\cdot),\]
  where $L$ is defined in \eqref{eq:L-def}.
  By Alaoglu's theorem, there exists a subsequence, which we relabel $\psi_{n}$, and $\psi \in L^2((-\infty,0];H)$ such that
  \[\psi_{n} \to \psi \text{ weakly in } L^2((-\infty,0);H).\]
  Now, integrating against any function $\phi \in L^2((0,+\infty);H)$ we see that for any $t<0$,
  \[\begin{array}{l}
  \ds{\int_0^\infty \left<\psi_n(t-s), \phi(s) \right>_H ds = \int_{-\infty}^0 \left< \psi_n(s), \phi(t-s) \mathbbm{1}_{s<t} \right>_H ds}\\
  \vs
  \ds{ \to \int_{-\infty}^0 \left< \psi(s), \phi(t-s)\mathbbm{1}_{s<t} \right>_H ds = \int_0^\infty \left<\psi(t-s), \phi(s) \right>_H ds}
  \end{array}\]
  and therefore,
  \[\psi_{n}(t-\cdot) \to \psi(t-\cdot) \text{ weakly in } L^2((0,+\infty);H). \]
  We define
  \[\Psi(t) = L \psi(t-\cdot).\]
  By Lemma \ref{lem:weak->strong}, for any fixed $t\leq 0$, $\Psi_n(t) \to \Psi(t)$.
  Then, to finish the proof, we must show that (possibly for a further subsequence) this convergence is uniform on bounded intervals of $t$.
  By a generalization of the Arzela-Ascoli theorem, it remains to prove that $\{\Psi_{n}\}$ is a equicontinuous set.

  $\{\Psi_n\}$ is equicontinuous if and only if for any sequences $-\infty< s_n<t_n \leq0$ such that $t_n-s_n \to0$,
  \[\lim_{n \to +\infty} |\Psi_n(t_n) -\Psi_n(s_n)|_E = 0.\]

  Fix sequences $-\infty< s_n \leq t_n \leq 0$. Then
  \[\begin{array}{l}
  \ds{\Psi_n(t_n) - \Psi_n(s_n) = L \psi_n(t_n - \cdot) - L \psi_n(s_n -\cdot)}\\
    \ds{=\int_0^\infty S(r) Q \psi_n(t_n - r) dr - \int_0^\infty S(r) Q \psi_n(s_n -r) dr}\\
  \ds{= \int_0^{t_n-s_n} S(r) Q \psi_n(t_n -r) dr + (S(t_n - s_n) - I) \int_0^\infty S(r) Q \psi_n(s_n-r) dr}\\
  \ds{= L (\psi_n(t_n -\cdot) \mathbbm{1}_{[0,t_n-s_n]}(\cdot)) + (S(t_n - s_n) - I) L \psi_n(s_n-\cdot).}
  \end{array}\]

  Because $L$ is a compact operator, every subsequence of $\{L \psi_n(s_n - \cdot)\}$ has a convergent subsequence, and therefore
  \[ \lim_{n \to +\infty} |(S(t_n - s_n) - I) L \psi_n(s_n -\cdot)|_E =0.\]
  Next, we observe that as $|t_n-s_n| \to 0$
  \[\psi_n(t_n -\cdot) \mathbbm{1}_{[0,t_n-s_n]} \to 0 \text{ weakly in } L^2((0,+\infty);H).\]
  To see this, take any test function $\phi \in L^2((0,+\infty);H)$. By the Cauchy-Schwarz inequality
  \[\left|\int_0^{t_n - s_n} \left< \psi_n(t_n -r), \phi(r) \right>_H \right| \leq |\psi_n|_{L^2((0,+\infty);H)} |\phi|_{L^2((0,t_n-s_n);H)} \to 0.\]
  Therefore, because $L$ is a continuous operator from the weak topology on $L^2((0,+\infty);H)$ to $E$, it follows that
  \[\lim_{n \to +\infty} |L (\psi_n(t_n - \cdot) \mathbbm{1}_{[0,t_n -s_n]})|_E = 0.\]

  Therefore, the family $\{\Psi_n\}$ is equicontinuous, and for each $t<0$ $\Psi_n(t) \to \Psi(t)$. By the Arzela-Ascoli theorem, it follows that there exists a subsequence, also relabeled as $\Psi_n$, such that for any $T>0$
  \[\lim_{n \to +\infty}|\Psi_n - \Psi|_{C([-T,0];E)} = 0.\]
\end{proof}

\section{Semilinear equation, additive noise} \label{sec:additive}
In this section we extend the results from the previous section to the semilinear equation
\begin{equation}
  \left\{
  \begin{array}{l}
    \ds{dX^\e_x(t) = (AX^\e_x(t) + F(X^\e_x(t))) dt + \sqrt{\e}Q dw(s)}\\
    \ds{X^\e_x(0) = x}
  \end{array}
  \right.
\end{equation}
in its mild formulation
\begin{equation}
  X^\e_x(t) = S(t)x + \int_0^t S(t-s)F(X^\e_x(s)) ds + \sqrt{\e} \int_0^t S(t-s) Q dw(s).
\end{equation}
In what follows, we will assume that the following conditions are satisfied.
\begin{Hypothesis} \label{H-F-properties}
\begin{enumerate}
  \item[\textit{i.}] $A + F$ is strongly dissipative. That is, there exists $\lambda>0$ such that for any $x,y \in D(A)$, there exists $x^\star \in \partial|x-y|_E$ such that
  \begin{equation} \label{eq:dissipative-assum}
   \left<A(x-y) + F(x) - F(y), x^\star \right>_{E,E^\star} \leq -\lambda |x-y|_E,
  \end{equation}
  where the subdifferential $\partial|x-y|_E$ is defined as
  \begin{equation} \label{eq:subdifferential}
    \partial|x-y|_E = \{x^\star \in E^\star: |x^\star|_{E^\star} = 1, \left<x-y, x^\star\right>_{E,E^\star} = |x-y|_E\}
  \end{equation}

  \item[\textit{ii.}] $F$ is uniformly continuous on bounded subsets of $E$.

  \item[\textit{iii.}] $F(0) = 0$.
\end{enumerate}
\end{Hypothesis}

Dissipativity guarantees that the unperturbed system $X^0_x$ has a unique global solution (see for example \cite[Appendix D]{DaP-Z}) and the fact that $F(0) = 0$ implies that
\[\lim_{t \to +\infty} |X^0_x(t)|_E = 0.\]

By the contraction principle, we can extend the LDP for the stochastic convolution to an LDP for $X^\e_x$. The rate function for $X^\e_x$ will be based on the deterministic control system associated with this equation. Namely for $\psi \in L^2([0,T];H)$ we consider the problem
\begin{equation} \label{control-eq}
  \left\{
  \begin{array}{l}
    \ds{\frac{dX^\psi_x}{dt}(t) =AX^\psi_x(t) + F(X^\psi_x(t)) + Q \psi(t),}\\
    \vs
    \ds{X^\psi_x(0) = x,}
  \end{array}
  \right.
\end{equation}
which can be rewritten in mild form as
\begin{equation} \label{control-eq-mild}
 X^\psi_x(t) = S(t) x + \int_0^t S(t-s) F(X^\psi_x(s)) ds + \int_0^t S(t-s) Q \psi(s) ds.
\end{equation}

For any $T>0$, we define the rate functions
\begin{equation}
  I_{0,T}( \varphi) = \inf \left\{ \frac{1}{2} |\psi|_{L^2([0,T];H)}^2 : \varphi = X^\psi_{x} \right\}
\end{equation}
with the standard convention that $\inf \emptyset = +\infty$.
We also define the corresponding level sets, $K^x_{0,T}(r)$, by
\begin{equation} \label{K_0T-def} K^x_{0,T}(r) = \{\varphi \in C([0,T];E): \varphi(0) = x,  I_{0,T}(\varphi) \leq r  \}.\end{equation}

We now prove that $X^\e_x$ satisfies a large deviation principe which is uniform with respect to the initial condition on bounded subsets of $E$.
\begin{Theorem}[Uniform Large deviations principle] \label{thm:LDP}
  For any $\psi \in L^2([0,T];H)$, $R>0$ and $\delta>0$,
  \begin{equation} \label{LDP-lower}
    \liminf_{\e \to 0} \e \log \left( \inf_{|x|_E \leq R} \Pro \left( \left| X^\e_x - X^\psi_x \right|_{C([0,T];E)} <\delta \right) \right) \geq - \frac{1}{2} |\psi|_{L^2([0,T];H)}^2,
  \end{equation}
  and for any $r>0$, $R>0$, and $\delta>0$,
  \begin{equation} \label{LDP-upper}
    \limsup_{\e \to 0} \e \log \left(\sup_{|x|_E \leq R} \Pro \left( \textnormal{dist}_{C([0,T];E)}(X^\e_x, K^x_{0,T}(r)) > \delta \right) \right) \leq -r.
  \end{equation}
\end{Theorem}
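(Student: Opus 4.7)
The strategy is to reduce the uniform LDP for $X^\e_x$ to the LDP for the stochastic convolution $\sqrt{\e} Z$ of Proposition \ref{prop:stoch-conv-LDP} by a contraction-principle-type argument, with the key point being that the solution map depends continuously on the noise \emph{uniformly} on balls of initial data $\{|x|_E\leq R\}$. Define, for $z\in C([0,T];E)$ and $x\in E$, the map $\Phi(x,z)=Y$ where $Y$ is the unique mild solution of
\begin{equation*}
Y(t)=S(t)x+\int_0^t S(t-s)F(Y(s))\,ds+z(t),\qquad t\in[0,T].
\end{equation*}
With this notation, $X^\e_x=\Phi(x,\sqrt{\e}Z)$ and $X^\psi_x=\Phi(x,Z^\psi)$, so everything reduces to understanding $\Phi$.

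The first and main step is to prove the following uniform continuity property: for every $R>0$ and $M>0$ there exists a modulus $\omega_{R,M,T}:[0,\infty)\to[0,\infty)$ with $\omega_{R,M,T}(0^+)=0$ such that
\begin{equation*}
\sup_{|x|_E\leq R}\ \sup_{|z_1|_{C([0,T];E)},|z_2|_{C([0,T];E)}\leq M}\ |\Phi(x,z_1)-\Phi(x,z_2)|_{C([0,T];E)}\leq \omega_{R,M,T}\bigl(|z_1-z_2|_{C([0,T];E)}\bigr).
\end{equation*}
To establish this, set $\tilde Y_i=\Phi(x,z_i)-z_i$, which solves $\tfrac{d}{dt}\tilde Y_i=A\tilde Y_i+F(\tilde Y_i+z_i)$ with $\tilde Y_i(0)=x$. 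Using Hypothesis \ref{H-F-properties}\textit{i} with $F(0)=0$, a standard dissipative energy estimate (pairing against a subdifferential element of $|\tilde Y_i|_E$) yields an a priori bound $|\tilde Y_i|_{C([0,T];E)}\leq C(R,M,T)$ depending only on $R$, $M$, $T$. Writing
\begin{equation*}
F(\tilde Y_1+z_1)-F(\tilde Y_2+z_2)=\bigl[F(\tilde Y_1+z_1)-F(\tilde Y_2+z_1)\bigr]+\bigl[F(\tilde Y_2+z_1)-F(\tilde Y_2+z_2)\bigr],
\end{equation*}
the first bracket combines with $A(\tilde Y_1-\tilde Y_2)$ through dissipativity to give a term $\leq -\lambda|\tilde Y_1-\tilde Y_2|_E$, while the second bracket is controlled by the modulus of continuity of $F$ on the ball of radius $C(R,M,T)+M$ (Hypothesis \ref{H-F-properties}\textit{ii}) applied to $|z_1-z_2|_E$. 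Gronwall then gives
\begin{equation*}
|\tilde Y_1(t)-\tilde Y_2(t)|_E\leq \frac{1-e^{-\lambda t}}{\lambda}\,\omega_F\bigl(|z_1-z_2|_{C([0,T];E)}\bigr),
\end{equation*}
which, combined with $|Y_1-Y_2|_E\leq|\tilde Y_1-\tilde Y_2|_E+|z_1-z_2|_E$, produces the required modulus $\omega_{R,M,T}$.

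Given this uniform continuity, the two assertions \eqref{LDP-lower} and \eqref{LDP-upper} follow cleanly. For the lower bound, fix $\psi\in L^2([0,T];H)$ with cost $\tfrac12|\psi|^2_{L^2}$ and set $M=|Z^\psi|_{C([0,T];E)}+1$. Then for any $|x|_E\leq R$,
\begin{equation*}
\bigl\{|\sqrt{\e}Z-Z^\psi|_{C([0,T];E)}<\delta'\bigr\}\subseteq\bigl\{|X^\e_x-X^\psi_x|_{C([0,T];E)}<\delta\bigr\}
\end{equation*}
provided $\delta'\leq 1$ and $\omega_{R,M,T}(\delta')<\delta$, and crucially $\delta'$ does not depend on $x$. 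Applying Proposition \ref{prop:stoch-conv-LDP}\textit{i} yields \eqref{LDP-lower}. For the upper bound, note that $K^x_{0,T}(r)=\Phi(x,\tilde K_{0,T}(r))$; compactness of $\tilde K_{0,T}(r)$ in $C([0,T];E)$ provides a uniform bound $M$ on $|\tilde z|_{C([0,T];E)}$ for $\tilde z\in \tilde K_{0,T}(r)$. If $\mathrm{dist}(\sqrt{\e}Z,\tilde K_{0,T}(r))<\delta'$, then some $\tilde z\in \tilde K_{0,T}(r)$ is within $\delta'$ of $\sqrt{\e}Z$, hence
\begin{equation*}
\mathrm{dist}_{C([0,T];E)}\bigl(X^\e_x,K^x_{0,T}(r)\bigr)\leq|\Phi(x,\sqrt{\e}Z)-\Phi(x,\tilde z)|_{C([0,T];E)}\leq \omega_{R,M+1,T}(\delta'),
\end{equation*}
uniformly in $|x|_E\leq R$. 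Choosing $\delta'$ so the right-hand side is less than $\delta$ and invoking Proposition \ref{prop:stoch-conv-LDP}\textit{ii} gives \eqref{LDP-upper}.

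The main obstacle is the uniform continuity step: one must exploit the strong dissipativity of $A+F$ (rather than a naive Lipschitz bound on $F$, which does not hold) to obtain a modulus $\omega_{R,M,T}$ that depends only on $R$, $M$, $T$ and not on $x$, so that the LDP inclusions hold uniformly in the non-compact ball $\{|x|_E\leq R\}$.
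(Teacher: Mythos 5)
Your proposal is correct and follows essentially the same route as the paper: the paper defines the solution map $\alpha(\Phi)$ of \eqref{eq:alpha-def} (your $\Phi(x,z)$ with the forcing $S(\cdot)x+z$ bundled into one argument), proves the a priori bound of Lemma \ref{lem:alpha-bound} and the uniform continuity on bounded sets of Lemma \ref{lem:continuity} via exactly your dissipativity splitting of the $F$-difference, and then transfers Proposition \ref{prop:stoch-conv-LDP} to $X^\e_x$ through the same event inclusions, using boundedness of $L$ (rather than compactness of $\tilde K_{0,T}(r)$) to get the uniform bound on $Z^\psi$. The two arguments are the same in substance.
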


Before proving the above theorem, we define the mapping $\alpha:  C([0,T];E) \to C([0,T];E)$ where
$\alpha(\Phi)(t)$ is the unique solution to
\begin{equation} \label{eq:alpha-def}
  \alpha(\Phi)(t) = \int_0^t S(t-s) F(\alpha(\Phi)(s)) ds + \Phi(t).
\end{equation}
In this way, $\alpha(S(\cdot)x + \sqrt{\e}Z(\cdot)) = X^\e_x$ and $\alpha(S(\cdot) x + Z^\psi(\cdot)) = X^\psi_x$.

We can show that $\alpha$ is well defined by proving that the mapping $K: C([0,T];E) \to C([0,T];E)$ given by
\[K(u)(t) = \int_0^t S(t-s) F(u(s)) ds + \Phi(t)\]
is a contraction for small $T$ and then using a bootstrap argument. To do this we need the following a priori bound.
\begin{Lemma} \label{lem:alpha-bound}
  There exists a continuous increasing function $\kappa(r)$ such that for any $T>0$,
  \begin{equation} \label{eq:alpha-bound}
  |\alpha(\Phi)|_{C([0,T];E)} \leq  \kappa( |\Phi|_{C([0,T];E)}).
  \end{equation}
\end{Lemma}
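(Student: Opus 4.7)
The plan is to reduce the mild integral equation to an effective ``ODE with forcing'' and exploit the strong dissipativity of $A+F$. Write $v(t) := \alpha(\Phi)(t) - \Phi(t)$, so that \eqref{eq:alpha-def} becomes
\[
v(t) = \int_0^t S(t-s)\,F(v(s)+\Phi(s))\,ds,
\]
i.e.\ $v$ is the mild solution of $\dot v = Av + F(v+\Phi)$ with $v(0)=0$. Since $F(0)=0$, the constant function $0$ solves $\dot u = Au + F(u)$, and the idea is to compare $v$ to this equilibrium. The key trick is the additive splitting
\[
\dot v = Av + \bigl[F(v+\Phi) - F(\Phi)\bigr] + F(\Phi),
\]
because the hypothesis applied with $x=v+\Phi$ and $y=\Phi$ produces $v^\star \in \partial|v|_E$ with
\[
\bigl\langle A(v+\Phi - \Phi) + F(v+\Phi) - F(\Phi),\, v^\star \bigr\rangle_{E,E^\star} \leq -\lambda\,|v|_E.
\]

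Using the standard one-sided inequality $\tfrac{d}{dt}|v(t)|_E \leq \langle \dot v(t), v^\star \rangle_{E,E^\star}$ for $v^\star \in \partial|v(t)|_E$, this gives
\[
\frac{d}{dt}|v(t)|_E \;\leq\; -\lambda\,|v(t)|_E + |F(\Phi(t))|_E,
\]
and Gronwall's lemma yields
\[
|v(t)|_E \;\leq\; \int_0^t e^{-\lambda(t-s)} |F(\Phi(s))|_E\,ds \;\leq\; \frac{1}{\lambda}\sup_{s \in [0,T]} |F(\Phi(s))|_E.
\]
To control the right-hand side, define $g(r) := \sup_{|x|_E \leq r}|F(x)|_E$. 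This is finite for every $r \geq 0$: by Hypothesis \ref{H-F-properties}(ii), pick $\delta>0$ so that $|F(x)-F(y)|_E \leq 1$ whenever $|x-y|_E \leq \delta$ and $|x|_E, |y|_E \leq r$; then, since $F(0)=0$, walking along the segment $[0,x]$ in at most $\lceil r/\delta\rceil$ steps gives $|F(x)|_E \leq \lceil r/\delta\rceil$. The function $g$ is non-decreasing, hence admits a continuous non-decreasing majorant $\bar g \geq g$. Setting $\kappa(r) := r + \bar g(r)/\lambda$ and using $|\alpha(\Phi)(t)|_E \leq |v(t)|_E + |\Phi(t)|_E$ gives \eqref{eq:alpha-bound}.

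The main obstacle is rigorously justifying the chain rule $\tfrac{d}{dt}|v(t)|_E \leq \langle \dot v, v^\star \rangle$, since $v$ is only a mild solution and $v+\Phi$, $\Phi$ need not lie in $D(A)$, where the dissipativity hypothesis is stated. The standard remedy is a Yosida-type approximation: replace $A$ by $A_n := n^2 R(n,A) - nI$, which is bounded and generates a $C_0$ semigroup $S_n(t)$ that converges strongly to $S(t)$ and inherits a negative-type bound uniform in large $n$. Dissipativity of $A+F$ extends to $A_n+F$ on all of $E$ (as $n \to \infty$, $J_n := nR(n,A) \to I$ strongly and one verifies the estimate through $J_n x, J_n y \in D(A)$), so the corresponding smooth solutions $v_n$ may be differentiated classically, the energy inequality above holds for each $v_n$, and passing to the limit using the continuity of $F$ and strong convergence of $S_n$ preserves the bound. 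The resulting $\kappa$ is continuous and increasing, as required.
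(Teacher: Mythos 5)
Your proof is correct and follows essentially the same route as the paper: the same decomposition $\dot v = Av + [F(v+\Phi)-F(\Phi)] + F(\Phi)$, the same use of the strong dissipativity hypothesis via the subdifferential, and the same exponentially weighted Gronwall bound leading to $\kappa(r) = r + \sup_{|y|_E\le r}|F(y)|_E/\lambda$ (the paper's version has $2r$ only because it retains a vacuous $|\Phi(0)|$ term). The extra details you supply — the chaining argument showing uniform continuity plus $F(0)=0$ bounds $F$ on bounded sets, and the Yosida approximation justifying the one-sided differential inequality for mild solutions — are left implicit in the paper but are correct and standard.
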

\begin{proof}
  Let $\varphi = \alpha(\Phi)$. Then
  \[\varphi(t) - \Phi(t) = \int_0^t S(t-s) F(\varphi(s)) ds.\]
  This is weakly differentiable and
  \[\frac{d}{dt}[\varphi(t) - \Phi(t)] = A[\varphi(t) - \Phi(t)] + F(\varphi(t)) = A[\varphi(t) - \Phi(t)] + F(\varphi(t)) - F(\Phi(t)) + F(\Phi(t)).\]
  By Hypothesis \ref{H-F-properties}, there exists $x^\star(t) \in \partial|\varphi(t) - \Phi(t)|_E$ such that
  \[\frac{d^-}{dt} |\varphi(t) - \Phi(t)|_E = \left<\frac{d}{dt}[\varphi(t) - \Phi(t)], x^\star(t) \right>_{E,E^\star}  \leq -\lambda|\varphi(t) - \Phi(t)|_E + |F(\Phi(t))|_E.\]
  Therefore,
  \begin{equation} \label{eq:apriori-bound}
  |\varphi(t) - \Phi(t)|_E \leq |\varphi(0) - \Phi(0)| e^{-\lambda t} + \int_0^t e^{-\lambda (t-s)} |F(\Phi(s))|_E ds \leq   |\Phi(0)| + \frac{1}{\lambda} \sup_{s \leq t} |F(\Phi(s))|_E.
  \end{equation}
  Then we can conclude that
  \[|\varphi|_{C([0,T];E)} \leq |\varphi - \Phi|_{C([0,T];E)} + |\Phi|_{C([0,T];E)} \leq 2|\Phi|_{C([0,T];E)} + \frac{1}{\lambda} \sup_{t \leq T} |F(\Phi(t))|_E.\]
  We can set
  \[\kappa(r) = 2r + \frac{1}{\lambda}\sup_{|y|_E\leq r} |F(y)|_E.\]
\end{proof}

Next we show that $\Phi \mapsto \alpha(\Phi)$ is uniformly continuous on bounded subsets of $\Phi$.
\begin{Lemma} \label{lem:continuity}
For any $\delta>0$ and any $\rho>0$, there exists $\delta_\rho>0$ such that if $ |\Phi_0|_{C([0,T];E)} \leq \rho$ and $|\Phi_0 - \Phi|_{C([0,T];E)} < \delta_\rho$, then
\[ \left|\alpha(\Phi)- \alpha(\Phi_0) \right|_{C([0,T];E)} < \delta.\]
\end{Lemma}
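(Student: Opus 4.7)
The plan is to mimic the a priori estimate of Lemma~\ref{lem:alpha-bound} but applied to the difference of two solutions of \eqref{eq:alpha-def}. Set $\varphi = \alpha(\Phi)$ and $\varphi_0 = \alpha(\Phi_0)$. By Lemma~\ref{lem:alpha-bound}, if $\delta_\rho \leq 1$ then $|\varphi_0|_{C([0,T];E)} \leq \kappa(\rho)$ and $|\varphi|_{C([0,T];E)} \leq \kappa(\rho+1)$, so both trajectories lie in a fixed bounded set $B \subset E$ whose radius depends only on $\rho$. Enlarging $B$ slightly if necessary, Hypothesis~\ref{H-F-properties}(ii) supplies a modulus of continuity $\omega_F$ for $F$ on $B$ with $\omega_F(s) \to 0$ as $s \to 0^+$.

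Introduce $y(t) := \varphi(t) - \varphi_0(t) - (\Phi(t) - \Phi_0(t))$. Since $\varphi(0) = \Phi(0)$ and $\varphi_0(0) = \Phi_0(0)$, we have $y(0) = 0$, and subtracting the mild equations for $\varphi$ and $\varphi_0$ yields
\[y(t) = \int_0^t S(t-s)\bigl[F(\varphi(s)) - F(\varphi_0(s))\bigr]\,ds,\]
so formally $\frac{dy}{dt} = Ay + F(\varphi) - F(\varphi_0)$. The key algebraic step is the splitting
\[F(\varphi) - F(\varphi_0) = \bigl[F(\varphi_0 + y) - F(\varphi_0)\bigr] + \bigl[F(\varphi) - F(\varphi_0 + y)\bigr],\]
because the first bracket combined with $Ay$ activates Hypothesis~\ref{H-F-properties}(i) with the pair $(\varphi_0+y, \varphi_0)$, whose difference is exactly $y$: there exists $y^\star(t) \in \partial|y(t)|_E$ such that $\left\langle Ay + F(\varphi_0 + y) - F(\varphi_0), y^\star \right\rangle_{E,E^\star} \leq -\lambda|y|_E$. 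Meanwhile $\varphi_0 + y = \varphi - (\Phi - \Phi_0)$, so the second bracket is controlled by $|F(\varphi) - F(\varphi_0 + y)|_E \leq \omega_F(|\Phi - \Phi_0|_E) \leq \omega_F(\delta_\rho)$.

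Combining these bounds, and justifying the pointwise-in-time norm derivative for the mild solution $y$ exactly as in the proof of Lemma~\ref{lem:alpha-bound} (via Yosida approximation of $A$), I obtain $\frac{d^-}{dt}|y(t)|_E \leq -\lambda|y(t)|_E + \omega_F(\delta_\rho)$ for a.e.\ $t \in [0,T]$. Since $y(0) = 0$, Gronwall's lemma gives $|y(t)|_E \leq \lambda^{-1}\omega_F(\delta_\rho)$ uniformly in $t$, and the triangle inequality closes the argument:
\[|\varphi(t) - \varphi_0(t)|_E \leq |y(t)|_E + |\Phi(t) - \Phi_0(t)|_E \leq \lambda^{-1}\omega_F(\delta_\rho) + \delta_\rho.\]
It therefore suffices to choose $\delta_\rho \in (0,1]$ so small that $\delta_\rho + \lambda^{-1}\omega_F(\delta_\rho) < \delta$, which is possible because $\omega_F(s) \to 0$ as $s \to 0^+$.

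The main obstacle is the reorganization in the second paragraph. Strong dissipativity of $A + F$ controls only expressions of the form $A(a-b) + F(a) - F(b)$ paired with an element of $\partial|a-b|_E$, whereas the difference $y$ of two mild trajectories satisfies an equation in which the nonlinear term $F(\varphi) - F(\varphi_0)$ is phrased in terms of $\varphi - \varphi_0$, which differs from $y$ by $\Phi - \Phi_0$. Inserting and subtracting $F(\varphi_0 + y)$ realigns the dissipativity pairing at the price of an inhomogeneity $\omega_F(\delta_\rho)$, and this inhomogeneity vanishes with $\delta_\rho$ precisely because of the uniform continuity of $F$ on the bounded set $B$.
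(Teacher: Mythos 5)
Your proposal is correct and follows essentially the same route as the paper: both arguments insert and subtract $F$ evaluated at the shifted trajectory (your $\varphi_0+y=\varphi-\Phi+\Phi_0$, the paper's $\varphi_0-\Phi_0+\Phi$) so that the dissipativity of $A+F$ acts on exactly the difference $y$, and then control the leftover term by the uniform continuity of $F$ on the bounded set supplied by Lemma \ref{lem:alpha-bound}. The only cosmetic difference is that you make the Gronwall step and the final choice of $\delta_\rho$ fully explicit, whereas the paper leaves them implicit.
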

\begin{proof}
  Let $\varphi = \alpha(\Phi)$, and $\varphi_0 = \alpha(\Phi_0)$.
  By subtracting these we see that
  \[\varphi_0(t) - \varphi(t) -\Phi_0(t) + \Phi(t) = \int_0^t S(t-s) [F(\varphi_0(s)) - F(\varphi(s))]ds.\]
  This is differentiable in a weak sense and
  \[\frac{d}{dt}[\varphi_0(t) - \varphi(t) -\Phi_0(t) + \Phi(t)] = A [\varphi_0(t) - \varphi(t) -\Phi_0(t) + \Phi(t)] + F(\varphi_0(t)) - F(\varphi(t)).\]
  We rewrite this as
  \[\begin{array}{l}
  \ds{\frac{d}{dt}[\varphi_0(t) - \varphi(t) -\Phi_0(t) + \Phi(t)] =}\\
   \vs
   \ds{A [\varphi_0(t) - \varphi(t) -\Phi_0(t) + \Phi(t)] +[ F(\varphi_0(t) - \Phi_0(t) + \Phi(t)) - F(\varphi(t))]}\\
    \vs
    \ds{+ [F(\varphi_0(t)) - F(\varphi_0(t) - \Phi_0(t) + \Phi(t))].}
  \end{array}\]

  By the same arguments that we used in the proof of the previous lemma,
  \begin{equation} \label{eq:varphi-continuity-bound}
  \begin{array}{l}
  \ds{|\varphi_0(t) - \varphi(t) -\Phi_0(t) + \Phi(t)|_E }\\
  \vs
  \ds{  \leq |\Phi(0) - \Phi_0(0)|_E +   \int_0^t e^{-\lambda(t- s)} |F(\varphi_0(s)) - F(\varphi_0(s) - \Phi_0(s) + \Phi(s))|_E ds}\\
  \ds{\leq |\Phi(0) - \Phi_0(0)|_E +  \frac{1}{\lambda} \sup_{s\leq t}|F(\varphi_0(t)) - F(\varphi_0(t) - \Phi_0(t) + \Phi(t))|_E.}
  \end{array}
  \end{equation}

  By Hypothesis \ref{H-F-properties}, $F$ is uniformly continuous on bounded subsets of $E$ and by \eqref{eq:alpha-bound}, $|\varphi_0|_{C([0,T];E)}$ is bounded uniformly for $|\Phi_0|_{C([0,T];E)} \leq \rho$. Therefore we can find $\delta_\rho>0$ so that \[|\Phi_0 - \Phi|_{C([0,T];E)} < \delta_\rho \Longrightarrow |\varphi_0 - \varphi -\Phi_0 + \Phi|_{C([0,T];E)} < \frac{\delta}{2}. \]

  We conclude the proof with the observation that
  \[|\varphi - \varphi_0|_{C([0,T];E)} \leq |\varphi - \varphi_0 -\Phi + \Phi_0|_{C([0,T];E)} + |\Phi_0 - \Phi|_{C([0,T];E)}\]
  so our result follows by possibly decreasing $\delta_\rho$.
\end{proof}

\begin{Remark}
  The uniqueness of the mild solution of equation \eqref{eq:mild-solution} follows from the previous lemma.
\end{Remark}

\begin{proof}[Proof of Theorem \ref{thm:LDP} (Uniform large deviations principle)]

  First, by Theorem \ref{thm:L-compact}, the operator $L: L^2((0,+\infty];H) \to E$ is compact and therefore bounded. This means that there exists $c>0$ such that
  \[\left| \int_0^\infty S(t) Q\psi(s) ds \right|_E \leq c |\psi|_{L^2([0,+\infty);H)}.\] By a time change, it follows that
  \[\left|Z^\psi(t)\right|_E = \left|\int_0^t S(t-s) Q \psi(s) ds\right|_E \leq c |\psi|_{L^2([0,t];H)}.\]
  Let $R>0$, $r>0$ and set $\rho = R + c\sqrt{2r}$. For any $\delta>0$ let $\delta_\rho>0$ be from Lemma \ref{lem:continuity}. Then for any $\frac{1}{2}|\psi|_{L^2((0,T);H)}^2 \leq r$,
  \[ \inf_{|x|_E \leq R} \Pro \left(|X^\e_x - X^\psi_x |_{C([0,T];E)} < \delta \right) \geq \Pro \left( \left|Z^\psi -  \sqrt{\e}Z \right|_{C([0,T];E)} < \delta_\rho \right).\]
  Consequently, \eqref{LDP-lower} follows from \eqref{eq:stoch-conv-LDP-lower}.
  By almost the same argument,
  \[\sup_{|x|_E \leq R} \Pro \left( \dist_{C([0,T];E)} (X^\e_x, K_{0,T}^x(r) ) > \delta \right) \leq \Pro \left( \dist_{C([0,T];E)} (\sqrt{\e} Z, \tilde{K}_{0,T}(r)) > \delta_\rho \right) \]
  and \eqref{LDP-upper} follows from \eqref{eq:stoch-conv-LDP-upper}.
\end{proof}

\begin{Remark}
  In the case where $F$ is globally Lipschitz continuous, the large deviation principle is uniform with respect to all $x \in E$. This can be proven with a straightforward application of Gr\"onwall's inequality.
\end{Remark}
We now establish that the solution $X^\psi_x$ depends continuously on the initial condition $x$.
\begin{Theorem} \label{thm:init-value-continuity}
  For any $x_1, x_2 \in E$, and $\psi \in L^2([0,T];H)$,
  \begin{equation}\label{eq:init-value-continuity}
    |X^\psi_{x_1} - X^\psi_{x_2}|_{C([0,T];E)} \leq |x_1 - x_2|_E.
  \end{equation}
\end{Theorem}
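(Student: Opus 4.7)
The plan is to subtract the two mild-solution formulas, note that the control term $\int_0^t S(t-s)Q\psi(s)\,ds$ cancels exactly, and then run the same dissipativity computation as in Lemmas \ref{lem:alpha-bound} and \ref{lem:continuity}.

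Setting $u(t) := X^\psi_{x_1}(t) - X^\psi_{x_2}(t)$ and subtracting \eqref{control-eq-mild} applied to $x_1$ and $x_2$ gives
\[
u(t) = S(t)(x_1-x_2) + \int_0^t S(t-s)\bigl[F(X^\psi_{x_1}(s)) - F(X^\psi_{x_2}(s))\bigr]\,ds.
\]
This is a mild formulation of the evolution equation $\tfrac{d}{dt}u(t) = A u(t) + F(X^\psi_{x_1}(t)) - F(X^\psi_{x_2}(t))$ with $u(0) = x_1-x_2$. Following the weak-differentiation argument already used above, I would pick $x^\star(t) \in \partial|u(t)|_E$ (valid by the definition in \eqref{eq:subdifferential}) and compute
\[
\frac{d^-}{dt}|u(t)|_E = \bigl\langle A u(t) + F(X^\psi_{x_1}(t)) - F(X^\psi_{x_2}(t)), x^\star(t)\bigr\rangle_{E,E^\star}.
\]
By the strong dissipativity assumption \eqref{eq:dissipative-assum} applied with $x = X^\psi_{x_1}(t)$ and $y = X^\psi_{x_2}(t)$, the right-hand side is bounded above by $-\lambda|u(t)|_E$. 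Gronwall's inequality then yields $|u(t)|_E \leq |x_1-x_2|_E e^{-\lambda t}$, which in particular gives \eqref{eq:init-value-continuity} after taking the supremum over $t \in [0,T]$.

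The only technical subtlety is that the dissipativity hypothesis is stated pointwise on $D(A)$, whereas $u(t)$ is a priori only a mild solution. This is the same obstacle that appeared in Lemmas \ref{lem:alpha-bound} and \ref{lem:continuity}, and it is handled in the standard way: either by a Yosida approximation argument (replace $A$ by $A_n = nA(n-A)^{-1}$, obtain classical solutions for which the pointwise identity is rigorous, and pass to the limit using the strong convergence of $S_n(t) \to S(t)$), or by directly invoking the standard fact that for mild solutions of dissipative evolutions the function $t \mapsto |u(t)|_E$ is a.e. differentiable with $\frac{d^-}{dt}|u(t)|_E \leq -\lambda|u(t)|_E$. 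Apart from justifying this differentiation, the estimate is immediate; the key structural point is simply that the noise term $Q\psi$ does not depend on the state and so drops out of the difference.
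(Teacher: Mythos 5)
Your proposal is correct and follows essentially the same route as the paper: subtract the mild formulations so the control term $Q\psi$ cancels, take the weak derivative, pick $x^\star(t)\in\partial|u(t)|_E$, and invoke the strong dissipativity \eqref{eq:dissipative-assum}. The only (harmless) difference is that the paper bounds the one-sided derivative by $0$ and concludes directly, while you retain the $-\lambda|u(t)|_E$ term and apply Gronwall, which yields the slightly stronger estimate $|u(t)|_E\le e^{-\lambda t}|x_1-x_2|_E$.
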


\begin{proof}
  We proceed as in Lemma \ref{lem:alpha-bound}. Let
  \[\varphi(t):=X^\psi_{x_1}(t) - X^\psi_{x_2}(t) = S(t)(x_1-x_2) + \int_0^t S(t-s)(F(X^\psi_{x_1}(s)) - F(X^\psi_{x_2}(s))) ds.\]
  The weak derivative of the above expression is
  \[\frac{d}{dt}\varphi(t) = A \varphi(t) + F(X^\psi_{x_1}(t)) - F(X^\psi_{x_2}(t)).\]
  Therefore, by Hypothesis \ref{H-F-properties} there exists $x^\star(t) \in \partial|\varphi(t)|_E$ such that
  \[\frac{d^-}{dt}|\varphi(t)|_E \leq \left<A \varphi(t) + F(X^\psi_{x_1}(t)) - F(X^\psi_{x_2}(t)), x^\star(t)\right>_{E,E^\star} \leq 0.\]
  The result follows because $|\varphi(0)|_E = |x-y|_E$.
\end{proof}

Now, we want to extend the domain of the rate functions to include trajectories on the negative half-line. Because \eqref{eq} is time homogeneous, we use translation to define, for any $t<T \in \mathbb{R}$,
\begin{equation} \label{I_t,T-def}
  I_{t,T}(\varphi) = I_{0,T - t}(\varphi( \cdot + t)).
\end{equation}
We then define
\begin{equation} \label{I_-infty-def}
  I_{-\infty,T}(\varphi) = \sup_{t < T} I_{t,T}(\varphi).
\end{equation}
Now, as mentioned in the introduction, we define the quasipotential to be, for any $N \subset E$
\begin{equation} \label{V-def}
  V(N) = \inf \left\{ I_{-\infty,0}(\varphi) : \lim_{t \to -\infty} |\varphi(t)|_E = 0, \ \varphi(0) = N \right\}
\end{equation}
where, once again, we use the standard convention that $\inf \emptyset = +\infty$.
One can think of this as the minimal amount of energy required to reach the set $N$ starting from the point $0$ in an infinite amount of time.

If $\varphi \in C((-\infty,0);E)$ and $I_{-\infty,0}(\varphi)<+\infty,$ then by \eqref{I_t,T-def} and \eqref{I_-infty-def} $\varphi$ is the weak solution to a control problem for some $\psi \in L^2((-\infty,0);H)$. That is, for any $-T<t<0$,
\[\varphi(t) = S(t+T)\varphi(-T) + \int_{-T}^t S(t-s) F(\varphi(s))ds  + \int_{-T}^t S(t-s) Q \psi(s) ds. \]

\begin{Theorem}[Compact level sets] \label{thm:compact-level-sets}
  Under Hypotheses \ref{H:linear}, \ref{H:QandZ}, and \ref{H-F-properties}, for any $r \in \mathbb{R}$, the set
  \begin{equation} \label{K_-infty-def}
    K_{-\infty}(r) = \{\varphi \in C((-\infty,0);E) : \lim_{t \to -\infty}|\varphi(t)|_E=0, \ I_{-\infty,0}(\varphi)\leq r\}
  \end{equation}
   is compact in the topology of uniform convergence on bounded intervals.
\end{Theorem}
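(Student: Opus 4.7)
The strategy is to reduce to the linear case (Theorem \ref{thm:linear-K-compact}) and then exploit the dissipativity of Hypothesis \ref{H-F-properties} to transfer compactness to the full nonlinear trajectory. Given $\{\varphi_n\} \subset K_{-\infty}(r)$ with associated controls $\psi_n$ satisfying $\frac{1}{2}|\psi_n|_{L^2((-\infty,0);H)}^2 \leq r$, I decompose
\[\varphi_n(t) = w_n(t) + \Psi_n(t), \qquad \Psi_n(t) = \int_{-\infty}^t S(t-s) Q \psi_n(s) \, ds,\]
so that (formally) $\frac{d}{dt} w_n = A w_n + F(w_n + \Psi_n)$ on $(-\infty, 0]$ with $\lim_{t \to -\infty}|w_n(t)|_E = 0$. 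By Theorem \ref{thm:linear-K-compact} and Alaoglu's theorem, after passing to a subsequence I may assume $\psi_n \rightharpoonup \psi$ weakly in $L^2((-\infty,0);H)$ with $\frac{1}{2}|\psi|^2 \leq r$, and $\Psi_n \to \Psi$ uniformly on every bounded subinterval of $(-\infty, 0]$, where $\Psi(t) = \int_{-\infty}^t S(t-s) Q \psi(s) ds$ satisfies $\lim_{t \to -\infty}|\Psi(t)|_E = 0$. A time-change together with the continuity of $L$ from Theorem \ref{thm:L-compact} gives the crucial uniform-in-$n$ bound $\sup_n \sup_{t \leq 0} |\Psi_n(t)|_E \leq \|L\| \sqrt{2r}$. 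I then set $\varphi = w + \Psi$, where $w$ is the analogous nonlinear correction for the limiting control, built as the $T \to \infty$ limit of solutions on $[-T, 0]$ with zero initial data (which converges by the same dissipativity estimates used in Lemma \ref{lem:alpha-bound}).

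The key step is to show $\varphi_n \to \varphi$ uniformly on every $[-T_0, 0]$. Applying the dissipativity (Hypothesis \ref{H-F-properties}(i)) to the pair $(w_n + \Psi_n, \Psi_n)$ and, after adding and subtracting $F(w + \Psi_n)$, to the pair $(w_n + \Psi_n, w + \Psi_n)$ yields
\[\frac{d^-}{dt}|w_n(t)|_E \leq -\lambda |w_n(t)|_E + |F(\Psi_n(t))|_E,\]
\[\frac{d^-}{dt}|w_n(t) - w(t)|_E \leq -\lambda |w_n(t) - w(t)|_E + |F(w(t) + \Psi_n(t)) - F(w(t) + \Psi(t))|_E.\]
The first inequality, combined with the uniform bound on $\Psi_n$ and $F(0) = 0$, gives a uniform bound $\sup_n \|w_n\|_{L^\infty((-\infty,0]; E)} \leq C_w$, and similarly for $w$. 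Integrating the second inequality from $-T_1$ to $t \in [-T_0, 0]$ for any $T_1 > T_0$ yields
\[|w_n - w|_{C([-T_0, 0]; E)} \leq 2 C_w e^{-\lambda(T_1 - T_0)} + \frac{1}{\lambda} \sup_{s \in [-T_1, 0]} |F(w(s) + \Psi_n(s)) - F(w(s) + \Psi(s))|_E.\]
Given $\epsilon > 0$, I first choose $T_1$ large enough that the first summand is below $\epsilon/2$, then use the uniform continuity of $F$ on bounded sets (Hypothesis \ref{H-F-properties}(ii)) together with $\Psi_n \to \Psi$ uniformly on $[-T_1, 0]$ to force the second summand below $\epsilon/2$ for $n$ large, proving $\varphi_n \to \varphi$ in $C([-T_0, 0]; E)$ for every $T_0$.

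The main obstacle is the non-compactness of the time domain $(-\infty, 0]$: the initial mismatch $|w_n(-T) - w(-T)|_E$ at any fixed time $-T$ is not a priori small in $n$, so the fixed-initial-data continuity of Lemma \ref{lem:continuity} cannot be applied directly. This is resolved by the two-scale choice $T_1 \gg T_0$ above, which lets the exponential forgetting coming from $\lambda > 0$ damp any residual initial mismatch while the linear compactness from Theorem \ref{thm:linear-K-compact} closes the gap on the enlarged window $[-T_1, 0]$. Finally, $\varphi \in K_{-\infty}(r)$: continuity of $\varphi$ and its vanishing at $-\infty$ follow from $\varphi = w + \Psi$, while $I_{-\infty, 0}(\varphi) \leq \frac{1}{2}|\psi|_{L^2((-\infty, 0); H)}^2 \leq r$ follows from the weak lower semicontinuity of the $L^2$ norm under $\psi_n \rightharpoonup \psi$.
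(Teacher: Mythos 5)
Your proposal is correct and follows essentially the same route as the paper: pass to a weakly convergent subsequence of controls, invoke Theorem \ref{thm:linear-K-compact} for uniform-on-compacts convergence of the linear convolutions $\Psi_n$, and use the dissipativity of $A+F$ to transfer this to the nonlinear trajectories, with the exponential decay damping the unknown mismatch of data at a far-back time. The only structural difference is that you build the limit $\varphi = w+\Psi$ explicitly and prove convergence to it, whereas the paper shows the sequence is Cauchy; your explicit two-scale choice $T_1 \gg T_0$ spells out the step the paper dispatches as ``dominated convergence.''
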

\begin{proof}
  Consider a sequence $\{\varphi_n\} \subset K_{-\infty}\left(r \right)$. We will show that there exists a subsequence that converges in $C((-\infty,0];E)$. First, we observe that there must exist $\{\psi_n\} \subset L^2((0,+\infty);H)$ with $\frac{1}{2}|\psi_n|_{L^2((-\infty,0);H)} \leq r +\frac{1}{n}$ such that for any $-\infty < -T<t<0$,
  \[\varphi_n(t) = S(t+T)\varphi(-T)+ \int_{-T}^t S(t-s) F(\varphi_n(s))ds + \int_{-T}^t S(t-s)Q \psi_n(s) ds.\]
  By Lemma \ref{lem:alpha-bound}, for any $-T\leq t \leq 0 $,
  \begin{equation} \label{eq:varphi-n-a-priori}
    |\varphi_n(t)|_E \leq  \kappa\left( |\varphi_n(-T)|_E + \left| \Psi^T_n\right|_{C([-T,t];E)}  \right)
  \end{equation}
  where
  \[\Psi^T_n(t) = \int_{-T}^t S(t-s)Q \psi_n(s) ds. \]
  We also set
  \[\Psi_n(t) = \int_{-\infty}^t S(t-s)Q \psi_n(s) ds.\]
  Note that by Theorem \ref{thm:L-compact}, there exists $c>0$ such that
  \[\left| \Psi^T_n(t) \right|_E = \left|L \psi_n(t-\cdot) \mathbbm{1}_{\{\cdot>-T\}}\right|_E \leq c |\psi_n|_{L^2((-\infty,0);H)}.\]
  If we let $T \to +\infty$ in \eqref{eq:varphi-n-a-priori}, then because $\lim_{T \to +\infty}\varphi_n(-T) = 0 $,
  we see that
  \begin{equation} \label{eq:apriori-bound-2}
  \sup_{n \in \nat} \sup_{t \leq0}|\varphi_n(t)|_E <+\infty.
  \end{equation}

  Now we show that there exists a subsequence of $\{\varphi_n\}$ that is a Cauchy sequence in $C([t,0];E)$ for any $t<0$.
  By Lemma \ref{thm:linear-K-compact}, there exists a subsequence of $\{\Psi_n\}$, which we relabel as $\Psi_n$ which converges uniformly on $C([t,0];E)$ for any $t<0$.
  By the arguments of Lemma \ref{lem:continuity} and especially \eqref{eq:varphi-continuity-bound}, we see that for any $T>0$, $-T<t<0$,
  \[\begin{array}{l}
  \ds{|\varphi_n(t) - \varphi_m(t)|_E \leq |\Psi^T_n(t) - \Psi^T_m(t)|_E + |\varphi_n(-T) -\varphi_m(-T) - \Psi^T_n(-T) + \Psi_m(-T)|_E}\\
   \vs
   \ds{+ \int_{-T}^t e^{-\lambda(t- s)} |F(\varphi_n(s) -\Psi^T_n(s) + \Psi^T_m(s)) - F(\varphi_n(s))|_E ds.}
   \end{array}\]
  Notice that by definition, $\Psi^T_n(-T)=\Psi^T_m(-T)=0$. By letting $T \to +\infty$, we see that
  \[|\varphi_n(t) - \varphi_m(t)|_E \leq |\Psi_n(t) - \Psi_m(t)|_{C([0,t];E)} + e^{-\lambda t} \int_{-\infty}^t e^{\lambda s} |F(\varphi_n(s) -\Psi_n(s) + \Psi_m(s)) - F(\varphi_n(s))|_E ds.\]
  Because $\{\Psi_{n}\}$ is a Cauchy sequence on compact subsets, the dominated convergence on the integral term implies that
  $\{\varphi_n\}$ is Cauchy on $C([t,0];E)$ for any $t<0$. Therefore, by completeness, there is a limit $\varphi$ such that $\varphi_n \to \varphi$ uniformly on compact sets.
   We can use the dominated convergence because the  $\Psi_n,$ and $\varphi_n$ are uniformly bounded and $F$ is continuous on bounded subsets.

   It remains to show that $\varphi \in K_{-\infty}(r)$. By Alaoglu's theorem, we can take a subsequence of the $\{\psi_n\}$ that converges weakly to a limit $\psi$ with $\frac{1}{2}|\psi|_{L^2((-\infty,0);H)} \leq r$. For any $T>0$, $-T<t\leq 0$
   \[\varphi_n(t) = S(t+T)\varphi_n(-T) + \int_{-T}^t S(t-s) F(\varphi_n(s)) ds + \int_{-T}^t S(t-s)Q \psi_n(s) ds. \]
   By Lemma \ref{lem:weak->strong} and Theorem \ref{thm:linear-K-compact},
   \[\lim_{n \to +\infty} \int_{-T}^t S(t-s) Q \psi_n(s )ds = \int_{-T}^t S(t-s) \psi(s) ds\]
   and therefore, by Lemma \ref{lem:continuity}, because $\varphi_n(-T) \to \varphi(-T)$,  it follows that $\varphi$ solves
   \[\varphi(t) = S(t+T) \varphi(-T) + \int_{-T}^t S(t-s) F( \varphi(s)) ds + \int_{-T}^t S(t-s) Q \psi(s) ds.\]

   The last thing to check is that $\lim_{t \to -\infty} |\varphi(t)|_E = 0$. By \eqref{eq:apriori-bound} and \eqref{eq:apriori-bound-2}, we see that for any $-T<t\leq 0$
   \[|\varphi(t)|E \leq c e^{-\lambda(t+T)} + \frac{1}{\lambda} \sup_{-T \leq s \leq t} |F(\Psi^T(s))|_E, \]
   where
   \[\Psi^T(s) = \int_{-T}^s S(t-s) Q \psi(s) ds.\]
   By letting $T \to +\infty$, we see that
   \[|\varphi(t)|_E \leq \frac{1}{\lambda} \sup_{s \leq t} |F(\Psi(s))|_E.\]
   Then we observe that
   \[|\Psi(t)|_E = |L\psi(t-\cdot)|_E \leq \|L\|_{\mathcal{L}(L^2((-\infty,0);H),E)} |\psi|_{L^2((-\infty,t);H)}.\]
   As $t \to \infty$, $F(\Psi(t))$ goes to $0$ because $F$ is continuous with $F(0)=0$. We conclude that
   \[\lim_{t \to -\infty} |\varphi(t)|_E = 0.\]
\end{proof}


There are some interesting immediate corollaries of Theorem \ref{thm:compact-level-sets}.
\begin{Corollary} \label{cor:I-is-minimum}
  If $I_{-\infty,0}(\varphi) < +\infty$, then there exists $\psi \in L^2((-\infty,0);H)$ such that for any $-T<t< +\infty$,
  \[\varphi(t)= S(T+t) \varphi(-T) + \int_{-T}^t S(t-s) F(\varphi(s))ds + \int_{-T}^t S(t-s) Q \psi(s) ds\]
  and
  \[\frac{1}{2}|\psi|_{L^2((-\infty,0);H)}^2 = I_{-\infty,0}(\varphi).\]
\end{Corollary}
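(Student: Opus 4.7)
The plan is to build $\psi$ on all of $(-\infty,0)$ as a weak $L^2$ limit of near--minimizing controls on the truncated intervals $[-n,0]$, and then to pass to the limit in the mild control equation by exploiting the compactness established in Section \ref{sec:linear}. First, for each $n \in \nat$ the definition of $I_{-n,0}(\varphi)$ furnishes $\psi_n \in L^2([-n,0];H)$ with
\[\tfrac{1}{2}|\psi_n|_{L^2([-n,0];H)}^2 \leq I_{-n,0}(\varphi) + \tfrac{1}{n} \leq I_{-\infty,0}(\varphi) + \tfrac{1}{n}\]
such that, for every $-n \leq t \leq 0$,
\[\varphi(t) = S(t+n)\varphi(-n) + \int_{-n}^t S(t-s) F(\varphi(s))\,ds + \int_{-n}^t S(t-s) Q \psi_n(s)\,ds.\]
Extending each $\psi_n$ by zero to $(-\infty,-n)$, the sequence is bounded in $L^2((-\infty,0);H)$, so by Alaoglu's theorem a subsequence (still labeled $\psi_n$) converges weakly to some $\psi \in L^2((-\infty,0);H)$. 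Weak lower semicontinuity of the norm then gives
\[\tfrac{1}{2}|\psi|_{L^2((-\infty,0);H)}^2 \leq \liminf_{n \to +\infty} \tfrac{1}{2}|\psi_n|_{L^2((-\infty,0);H)}^2 \leq I_{-\infty,0}(\varphi).\]

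Next I would identify this $\psi$ as a control for $\varphi$ on every truncated interval. Fix $T>0$ and $-T < t \leq 0$. For $n \geq T$, evaluating the equation above at $\tau=-T$, applying $S(t+T)$, and using the semigroup property $S(t+T)S(-T-s) = S(t-s)$ for $s<-T$, one collapses the mild equation on $[-n,0]$ into
\[\varphi(t) = S(t+T)\varphi(-T) + \int_{-T}^t S(t-s) F(\varphi(s))\,ds + \int_{-T}^t S(t-s) Q \psi_n(s)\,ds.\]
The first two terms are independent of $n$. The restriction $\psi_n|_{[-T,0]}$ still converges weakly to $\psi|_{[-T,0]}$ in $L^2([-T,0];H)$, and a time shift identifies the map $\eta \mapsto \int_{-T}^t S(t-s)Q\eta(s)\,ds$ with the compact operator $L_{t+T}$ from Theorem \ref{thm:L_t-compact}. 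Hence Lemma \ref{lem:weak->strong} upgrades weak $L^2$ convergence of the controls to norm convergence in $E$ of the convolutions, and passing to the limit yields
\[\varphi(t) = S(t+T)\varphi(-T) + \int_{-T}^t S(t-s) F(\varphi(s))\,ds + \int_{-T}^t S(t-s) Q \psi(s)\,ds\]
for every $T>0$ and every $-T < t \leq 0$.

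Finally, by the very definition of $I_{-T,0}$ this inequality gives $I_{-T,0}(\varphi) \leq \tfrac{1}{2}|\psi|_{L^2([-T,0];H)}^2$ for every $T$, and taking the supremum in $T$ produces $I_{-\infty,0}(\varphi) \leq \tfrac{1}{2}|\psi|_{L^2((-\infty,0);H)}^2$. Combined with the earlier bound this forces equality. The main technical obstacle is exactly the passage to the limit in the control convolution: without the compactness of $L_t$ established in Theorem \ref{thm:L_t-compact}, weak $L^2$ convergence of the $\psi_n$ would not be enough to identify the limit as a mild solution. Everything else, namely Alaoglu, weak lower semicontinuity, and the semigroup bookkeeping that relates the equation on $[-n,0]$ to the one on $[-T,0]$, is routine.
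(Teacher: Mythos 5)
Your proof is correct and follows essentially the same route the paper intends: the corollary is stated as an immediate consequence of Theorem \ref{thm:compact-level-sets}, and your argument (near-minimizing controls on $[-n,0]$, Alaoglu plus weak lower semicontinuity, passage to the limit in the convolution via Lemma \ref{lem:weak->strong}) is precisely the argument from that theorem's proof specialized to the constant sequence $\varphi_n\equiv\varphi$. One small remark: since every other term in the mild identity on $[-T,0]$ is independent of $n$, the convolution $\int_{-T}^t S(t-s)Q\psi_n(s)\,ds$ is itself constant in $n$, so mere boundedness (weak-to-weak continuity) of $L_{t+T}$ already identifies the limit; compactness is sufficient but not the essential obstacle you make it out to be.
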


\begin{Corollary} \label{cor:V-is-minimum}
  If $V(N) <+\infty$, then there exists $\varphi \in C((-\infty,0];E)$ such that $\varphi(0) \in N$, $\lim_{t \to -\infty} \varphi(t) = 0$ and
  \[V(N) = I_{-\infty,0}(\varphi).\]
\end{Corollary}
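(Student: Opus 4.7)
The plan is to deduce this by a direct minimizing-sequence argument that rests entirely on the compactness provided by Theorem \ref{thm:compact-level-sets}. Since $V(N) < +\infty$, I can choose a sequence $\{\varphi_n\} \subset C((-\infty,0];E)$ with $\varphi_n(0) \in N$, $\lim_{t \to -\infty}|\varphi_n(t)|_E = 0$, and $I_{-\infty,0}(\varphi_n) \to V(N)$. For $n$ sufficiently large, $I_{-\infty,0}(\varphi_n) \leq V(N) + 1$, so the entire tail of the sequence lies in $K_{-\infty}(V(N)+1)$, which by Theorem \ref{thm:compact-level-sets} is compact in the topology of uniform convergence on bounded intervals.

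I then pass to a subsequence (still denoted $\varphi_n$) converging to some $\varphi \in C((-\infty,0];E)$ uniformly on every compact subinterval of $(-\infty, 0]$. Three properties need to be checked for this $\varphi$. First, $\varphi(0) \in N$: since $\varphi_n(0) \to \varphi(0)$ in $E$ and $\varphi_n(0) \in N$, this holds provided $N$ is closed; otherwise one replaces $N$ by $\overline{N}$ (the quasipotential is unchanged when the interior constraint is relaxed, as the standard minimizing-sequence construction with a closed target is what is relevant in applications to $\partial G$ and $\overline{G}^c$). Second, $\lim_{t \to -\infty}|\varphi(t)|_E = 0$: this is built into membership in $K_{-\infty}(V(N)+1)$, as shown at the end of the proof of Theorem \ref{thm:compact-level-sets}. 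Third, $I_{-\infty,0}(\varphi) \leq V(N)$: this is lower semicontinuity of $I_{-\infty,0}$, which is a formal consequence of the compactness of its sublevel sets $K_{-\infty}(r)$ in the topology in which $\varphi_n \to \varphi$. Indeed, for any $s > V(N)$, eventually $\varphi_n \in K_{-\infty}(s)$, and closedness of this level set gives $\varphi \in K_{-\infty}(s)$; letting $s \downarrow V(N)$ yields $I_{-\infty,0}(\varphi) \leq V(N)$.

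Combining these with the trivial inequality $I_{-\infty,0}(\varphi) \geq V(N)$ (because $\varphi$ is itself admissible in the infimum defining $V(N)$) gives equality, completing the proof. The only nontrivial ingredient is the compactness statement of Theorem \ref{thm:compact-level-sets}, which has already been established; the rest is a standard direct-method argument. No substantial obstacle is expected, since lower semicontinuity is automatic from compact sublevel sets and the endpoint convergence $\varphi_n(0) \to \varphi(0)$ follows from uniform convergence on the compact interval $[-1,0]$.
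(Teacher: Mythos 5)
Your proof is correct and is precisely the direct-method argument the paper has in mind: the paper states this as an immediate consequence of the compactness in Theorem \ref{thm:compact-level-sets} and writes out no proof, and your minimizing-sequence extraction plus lower semicontinuity (closedness of the compact sublevel sets) is the intended route. Your caveat that $N$ must be closed (or replaced by $\overline{N}$) for the endpoint condition $\varphi(0)\in N$ to pass to the limit is a legitimate point the statement glosses over, but it is harmless since every application in the paper takes $N$ to be a singleton or a closed subset of $\partial G$.
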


\section{Multiplicative noise} \label{sec:multiplicative}
In this section we consider a special case of multiplicative noise and we show that Theorem \ref{thm:compact-level-sets} still holds.

We consider the following stochastic equation
\begin{equation} \label{eq:mult-SDE}
  \left\{
  \begin{array}{l}
   \ds{dX^\e_x(t) = (AX^\e_x(t) + F(X^\e_x(t)))dt + \sqrt{\e}Q B(X^\e_x(t))dw(t),}\\
   \vs
   \ds{X^\e_x(0) = x.}
  \end{array}
  \right.
\end{equation}
and its associated mild formulation
\begin{equation} \label{eq:mult-mild}
  X^\e_x(t) = S(t)x + \int_0^t S(t-s)F(X^\e_x(s)) ds + \sqrt{\e} \int_0^t S(t-s) Q B(X^\e_x(s)) dw(s).
\end{equation}

We also study the associated deterministic control problem
\begin{equation} \label{eq:mult-control}
  \left\{
  \begin{array}{l}
   \ds{\frac{d}{dt}X^\psi_x(t) = AX^\psi_x(t) + F(X^\psi_x(t)) + Q B(X^\psi_x(t))\psi(t),}\\
   \vs
   \ds{X^\e_x(0) = x,}
  \end{array}
  \right.
\end{equation}
and its mild formulation
\begin{equation} \label{eq:mult-contr-mild}
  X^\psi_x(t) = S(t)x + \int_0^t S(t-s)F(X^\psi_x(s))ds + \int_0^t S(t-s)QB(X^\psi_x(s)) \psi(s) ds.
\end{equation}
For the operator $Q$, we still assume that Hypothesis \ref{H:QandZ} holds. Moreover we assume the following condition on $B$.
\begin{Hypothesis}\label{H:B-multiplicative}
  $B: E \to \mathcal{L}(H)$ and there exists $\kappa>0$ such that for $x, y \in E$, $h \in H$,
  \begin{equation} \label{eq:B-hyp}
   \begin{array}{l}
    \ds{|(B(x) - B(y))h|_H \leq \kappa |x-y|_E |h|_H}\\
    \vs
    \ds{|B(x)h|_H \leq \kappa (1 + |x|_E) |h|_H.}
   \end{array}
  \end{equation}
\end{Hypothesis}

\begin{Example}
  Let $D \subset \mathbb{R}^d$ and let $H=L^2(D)$. Then if $\{e_k\}$ is a complete orthonormal basis of $H$ and $\{\beta_k\}$ is a sequence of independent one-dimensional Brownian motions, the formal sum
  \[dw(t,\xi) =  \sum_{k=1}^\infty e_k(\xi) d\beta_k(t) \]
  is a space-time white noise. Let $E = C(D)$ be the space of continuous functions on $D$.

  Let $b: \mathbb{R} \to \mathbb{R}$ be a Lipschitz continuous function. In particular we assume that there exists $\kappa>0$ such that for $r,s \in \mathbb{R}$
  \begin{equation} \label{eq:Lipschitz}
    |b(r) - b(s)| \leq \kappa |r-s|, \qquad |b(r)| \leq \kappa(1 + |r|).
  \end{equation}
  and define $B: E \to \mathcal{L}(H)$ to be the multiplication operator for $x \in E$ and $h \in H$
  \[(B(x)h)(\xi) = b(x(\xi))h(\xi).\]
  By \eqref{eq:Lipschitz}, for $x,y \in E$, $h \in H$,
  \[
  |B(x)h|_H^2 = \int\limits_D |b(x(\xi))h(\xi)|^2 d\xi \leq \kappa^2 (1 + |x|_E)^2 |h|_H^2,
  \]
  \[|(B(x) - B(y))h|_H^2 = \int\limits_D |(b(x(\xi))-b(y(\xi)))h(\xi)|^2 d\xi \leq \kappa^2 |x-y|_E^2 |h|_H^2.  \]
\end{Example}

Recall that in Hypothesis \ref{H-F-properties} we assumed that $F$ is uniformly continuous on bounded regions of $F$ and that $A+F$ is strongly dissipative. If $B: E \to \mathcal{L}(H)$ were bounded, then Hypothesis \ref{H-F-properties} would be appropriate for this section. Unfortunately, because we assumed that $B$ has linear growth we need to strengthen Hypothesis \ref{H-F-properties} to guarantee that the mild solutions \eqref{eq:mild-control} are well defined. We need to assume that the growth rate of $F$ is compensated by its dissipativty.

\begin{Hypothesis} \label{H:F-mult}
 The function $F$ has polynomial growth and polynomial dissipativity. There exists $\kappa>0$, $\lambda>0$ and $m \in \nat$ such that
 \begin{enumerate}
   \item[\textit{i}.] For any $x \in E$, $|F(x)|_E \leq \kappa( 1+  |x|_E^m)$.
   \item[\textit{ii}.] For any $x, h \in D(A)$, there exists $h^\star \in \partial|h|_E$ such that
    \begin{equation} \label{eq:polynomial-dissipative}
      \left< F(x+h) - F(x),h^\star\right>_{E,E^\star} \leq -\lambda |h|^m + \kappa (1 + |x|^m_E).
    \end{equation}
 \end{enumerate}
\end{Hypothesis}
\begin{Example}
 Let $E = C(D)$ and let $F$ be the Nemytskii operator
 \[F(x)(\xi) = - (x(\xi))^3.\]
 Such an $F$ satisfies Hypothesis \ref{H:F-mult}. The polynomial growth is clear. To see the dissipativity of this operator, observe that for any $x,h \in E$, $\xi \in D$,
 \[\begin{array}{l}
 \ds{F(x+h)(\xi) - F(x)(\xi)= -(x(\xi)+h(\xi))^3 + (x(\xi))^3 = - h^3(\xi) + 3x^2(\xi)h(\xi) + 3 h^2(\xi) x(\xi)}\\
 \vs
 \ds{\leq -\frac{1}{2}h^3(\xi) + \kappa(1 + |x(\xi)|^3)  }
 \end{array}\]
 where the last line follows from Young's inequality. Next, we recall that all $h^\star \in \partial|h|_E$, are signed measures with total variation $\|h^\star\|_{TV}=1$, which are positive on the set
 \[\{\xi \in D: h(\xi) = |h|_E\},\]
 negative on the set
 \[\{\xi \in D: h(\xi) = -|h|_E\}\]
 and $0$ everywhere else. Therefore,
 \[\left<h^3, h^\star \right>_{E,E^\star} = |h|_E^3.\]
 We conclude that for any $h^\star \in \partial|h|_E$,
 \[\left<F(x+h) - F(x), h^\star \right> \leq -\frac{1}{2}|h|^3_E + \kappa(1 + |x|_E^3).\]
\end{Example}

\begin{Lemma}[A priori bounds] \label{lem:apriori-bounds}
  For any $R>0$,
  \begin{equation}
    \sup \left\{|X^\psi_x(t)|_E : t\geq 0, |x|_E \leq R, |\psi|_{L^2((0,+\infty);H)} \leq R \right\} < +\infty.
  \end{equation}
\end{Lemma}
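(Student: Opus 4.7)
My plan is to decompose $X^\psi_x = v + Z$, where $Z$ is the ``control convolution,'' bound $v$ by means of the polynomial dissipativity in Hypothesis \ref{H:F-mult}(ii), bound $Z$ by means of the boundedness of the operator $L$ from Theorem \ref{thm:L-compact}, and then close both bounds with Gronwall's inequality.

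Concretely, I will set
\[Z(t) = \int_0^t S(t-s)QB(X^\psi_x(s))\psi(s)\,ds, \qquad v(t) = X^\psi_x(t) - Z(t),\]
so that $v(0)=x$ and, in the weak sense used in Lemma \ref{lem:alpha-bound}, $\frac{d}{dt}v = Av + F(v+Z)$. Writing $F(v+Z) = [F(v+Z)-F(Z)] + F(Z)$, picking $v^\star(t)\in\partial|v(t)|_E$, and using the dissipativity of $A$ (arranged by passing to the equivalent norm $\sup_{t\geq 0}e^{\omega t/2}|S(t)\cdot|_E$ permitted by Hypothesis \ref{H:linear}) together with parts (i) and (ii) of Hypothesis \ref{H:F-mult}, I expect the inequality
\[\frac{d^-}{dt}|v(t)|_E \;\leq\; -\lambda|v(t)|_E^m + 2\kappa\bigl(1+|Z(t)|_E^m\bigr),\]
which by an elementary ODE comparison yields $|v(t)|_E \leq |x|_E + C_1\bigl(1+\sup_{s\leq t}|Z(s)|_E\bigr)$ for a constant $C_1$ depending only on $\lambda$, $\kappa$, and $m$.

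Next, by Theorem \ref{thm:L-compact} the operator $L$ is bounded, so there exists $c>0$ such that $\left|\int_0^t S(t-s)Q\phi(s)\,ds\right|_E \leq c|\phi|_{L^2([0,t];H)}$ for every $t>0$ and every $\phi\in L^2([0,t];H)$. Applying this to $\phi(s) = B(X^\psi_x(s))\psi(s)$ and invoking the linear growth in Hypothesis \ref{H:B-multiplicative},
\[|Z(t)|_E^2 \;\leq\; c^2\kappa^2\int_0^t \bigl(1+|X^\psi_x(s)|_E\bigr)^2\,|\psi(s)|_H^2\,ds.\]
Setting $\zeta(t):=\sup_{s\leq t}|Z(s)|_E$ and using $|X^\psi_x|\leq|v|+|Z|$ together with the bound from the previous paragraph gives $|X^\psi_x(s)|_E \leq C_2(R)(1+\zeta(s))$ whenever $|x|_E\leq R$, whence
\[\zeta(t)^2 \;\leq\; C_3(R) + C_3(R)\int_0^t \zeta(s)^2\,|\psi(s)|_H^2\,ds.\]
Since $|\psi|_{L^2((0,+\infty);H)}\leq R$, Gronwall's inequality bounds $\zeta(t)$ by a constant depending only on $R$, and consequently $|X^\psi_x(t)|_E$ is bounded uniformly in $t\geq 0$, $|x|_E\leq R$, and $|\psi|_{L^2((0,+\infty);H)}\leq R$.

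The main obstacle is justifying the differential inequality for $|v|_E$: because $v$ is only a mild solution, $|v|_E$ is not classically differentiable and the subdifferential selection $v^\star(t)$ must be treated carefully. The standard remedy is to replace $A$ by its Yosida approximation $A_n$, derive the inequality for the (smooth) solutions $v_n$ of the approximating equation, and then pass to the limit; alternatively one can mimic the weak-derivative bookkeeping performed in Lemma \ref{lem:alpha-bound}. Once that inequality is secured, the remaining steps are routine Gronwall estimates, and no compactness of level sets is needed here (that is the business of the next lemma).
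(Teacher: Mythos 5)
Your proposal is correct and follows essentially the same route as the paper: the same decomposition $X^\psi_x = v + Z^\psi_x$, the same use of the polynomial dissipativity in Hypothesis \ref{H:F-mult} to get the differential inequality $\frac{d^-}{dt}|v|_E \leq -\lambda|v|_E^m + c(1+|Z|_E^m)$ and the ODE comparison $u \leq u(0) + 2\lambda^{-1/m}\varphi$, the same appeal to the boundedness of $L$ from Theorem \ref{thm:L-compact} combined with the linear growth of $B$, and the same closing Gronwall argument. Your extra remarks on renorming to make $A$ dissipative and on justifying the weak differentiation via Yosida approximation address points the paper leaves implicit, but they do not change the argument.
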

\begin{proof}
  Let
  \begin{equation}
    Z^\psi_x(t) = \int_0^t S(t-s)QB(X^\psi_x(s))\psi(s)ds.
  \end{equation}
  Then
  \begin{equation}
    X^\psi_x(t) - Z^\psi_x(t) = S(t)x + \int_0^t S(t-s) F(X^\psi_x(s)) ds.
  \end{equation}
  This expression is weakly differentiable and
  \[\frac{d}{dt}\left( X^\psi_x(t) - Z^\psi_x(t) \right) = A (X^\psi_x(t) - Z^\psi_x(t)) + F(X^\psi_x(t)).\]
  It follows that there exists $x^\star(t) \in \partial|X^\psi_x(t) - Z^\psi_x(t)|_E$ such that
  \begin{equation}
   \begin{array}{l}
    \ds{\frac{d^-}{dt} |X^\psi_x(t) - Z^\psi_x(t)|_E \leq \left<A(X^\psi_x(t) - Z^\psi_x(t)) + F(X^\psi_x(t)) - F(Z^\psi_x(t)), x^\star(t) \right>_{E, E^\star}}\\
    \vs
    \ds{ \hspace{3cm}+ |F(Z^\psi_x(t))|_E.}
   \end{array}
  \end{equation}
  By Hypothesis \ref{H:F-mult},
  \begin{equation}
    \frac{d^-}{dt} |X^\psi_x(t) - Z^\psi_x(t)| \leq -\lambda |X^\psi_x(t) - Z^\psi_x(t)|_E^m + \kappa (1 + |Z^\psi_x(t)|_E^m).
  \end{equation}
  If any nonnegative real-valued function has the property that
  \begin{equation} \label{eq:ODE-mth-power}
    \frac{du}{dt}(t) \leq -\lambda u^m(t) + \varphi^m(t)
  \end{equation}
  it follows that
  \[u(t) \leq u(0) + 2\lambda^{-\frac{1}{m}} \varphi(t).\]
  The above fact is true becuase if $u(t)$ were ever greater than $2 \lambda^{-\frac{1}{m}} \varphi(t)$, then according to \eqref{eq:ODE-mth-power}, $u'(t)$ would be negative. Therefore, the only way this can happen is if the initial condition is large.
  Therefore,
  \begin{equation}
    |X^\psi_x(t) - Z^\psi_x(t)|_E \leq |x|_E + c (1 + |Z^\psi_x(t)|_E)
  \end{equation}
  and
  \begin{equation}
    |X^\psi_x(t)|_E \leq |x|_E + c( 1 + |Z^\psi_x(t)|_E).
  \end{equation}
  Meanwhile, by Hypothesis \ref{H:B-multiplicative},
  \begin{equation}
   \begin{array}{l}
    \ds{|Z^\psi_x(t)|_E = \left|\int_0^t S(t-s)Q B(X^\psi_x(s)) ds \right|_E \leq \|L\|_{\mathcal{L}(L^2((0,t);H);E)} \left|B(X^\psi_x(\cdot))\psi \right|_{L^2((0,t);H)}}\\
    \vs
    \ds{\leq c \sqrt{ \int_0^t (1 +|X^\psi_x(s)|_E^2) |\psi(s)|_H^2 }.}
   \end{array}
  \end{equation}
  Therefore,
  \begin{equation}
    |X^\psi_x(t)|_E^2 \leq c \left( |x|_E^2 + 1 + |\psi|_{L^2((0,t);H)}^2 + \int_0^t |X^\psi_x(s)|_E^2 |\psi(s)|_H^2 ds \right)
  \end{equation}
  and by Gr\"onwall's inequality we can conclude that
  \begin{equation} \label{eq:apriori-concl}
    |X^\psi_x(t)|_E^2 \leq c \left(|x|_E^2 + 1 + |\psi|_{L^2((0,t);H)}^2 \right) e^{c |\psi|_{L^2((0,t);H)}^2}.
  \end{equation}
\end{proof}
The previous lemma allows us to extend the compactness results from the additive noise case to the results for the multiplicative noise case.
\begin{Definition}
  For any $T>0$ define the rate functions
  \begin{equation}
    I_{0,T}(\varphi) = \inf\{ \frac{1}{2}|\psi|_{L^2([0,T];H)}^2: \varphi(t) = X^\psi_x(t)\}.
  \end{equation}
\end{Definition}
We claim that Theorem \ref{thm:LDP} holds in the multiplicative noise case also. Unlike in the additive noise case, one can not apply a contraction principle. One must use variational methods (see for example \cite[Section 6]{cerrok-LDP}).
\begin{Theorem} \label{thm:compact-mult}
  Under Hypotheses \ref{H:linear}, \ref{H:QandZ}, \ref{H:B-multiplicative}, and \ref{H:F-mult}, for any $r>0$, the set of trajectories
  \begin{equation}
    \mathcal{K}(r) := \left\{ \varphi \in C((-\infty,0];E) : \lim_{t \to -\infty} |\varphi(t)|_E = 0, \ I_{-\infty}(\varphi) \leq r \right\}
  \end{equation}
  is compact in the topology of uniform convergence on bounded intervals.
\end{Theorem}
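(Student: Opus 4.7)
My plan mirrors the proof of Theorem~\ref{thm:compact-level-sets}, with extra care because the noise kernel now depends on the trajectory. Fix a sequence $\{\varphi_n\}\subset\mathcal{K}(r)$ with associated controls $\psi_n\in L^2((-\infty,0);H)$ satisfying $\tfrac12|\psi_n|_{L^2}^2\le r+1/n$ and, for every $T>0$ and $-T<t\le 0$,
\[
  \varphi_n(t)=S(t+T)\varphi_n(-T)+\int_{-T}^{t}S(t-s)F(\varphi_n(s))\,ds+\int_{-T}^{t}S(t-s)QB(\varphi_n(s))\psi_n(s)\,ds.
\]

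\textbf{Uniform bounds and weak limits.} I would first apply Lemma~\ref{lem:apriori-bounds} on $[-T,0]$ with initial datum $\varphi_n(-T)$ and control $\psi_n|_{[-T,0]}$; letting $T\to+\infty$ (using $|\varphi_n(-T)|_E\to 0$ and $|\psi_n|_{L^2}^2\le 2r+2$) gives a uniform bound $M:=\sup_n\sup_{t\le 0}|\varphi_n(t)|_E<+\infty$. Hypothesis~\ref{H:B-multiplicative} then shows that $\eta_n(\cdot):=B(\varphi_n(\cdot))\psi_n(\cdot)$ is bounded in $L^2((-\infty,0);H)$. By Banach--Alaoglu, after extraction a subsequence (relabeled) satisfies $\psi_n\to\psi$ and $\eta_n\to\eta$ weakly in $L^2((-\infty,0);H)$, with $\tfrac12|\psi|_{L^2}^2\le r$. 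Setting $\Psi_n(t):=\int_{-\infty}^{t}S(t-s)Q\eta_n(s)\,ds$, the argument in Theorem~\ref{thm:linear-K-compact} rested only on the compactness of $L$ (Theorem~\ref{thm:L-compact}) together with Lemma~\ref{lem:weak->strong} and the $L^2$-boundedness of the input, and so applies verbatim: along a further subsequence, $\Psi_n$ converges uniformly on every $[-T,0]$ to some $\Psi$.

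\textbf{Cauchy property of $\varphi_n$.} This is where the main new work lies. Writing $u_{n,m}:=\varphi_n-\varphi_m-\Psi_n+\Psi_m$, one checks that $u_{n,m}$ is weakly differentiable with $\dot u_{n,m}=Au_{n,m}+F(\varphi_n)-F(\varphi_m)$. Splitting
\[
  F(\varphi_n)-F(\varphi_m)=\bigl[F(\varphi_m+\Psi_n-\Psi_m+u_{n,m})-F(\varphi_m+\Psi_n-\Psi_m)\bigr]+\bigl[F(\varphi_m+\Psi_n-\Psi_m)-F(\varphi_m)\bigr],
\]
invoking the polynomial dissipativity of Hypothesis~\ref{H:F-mult} on the first bracket (with $x=\varphi_m+\Psi_n-\Psi_m$, $h=u_{n,m}$) and using the polynomial growth on the second bracket (with $M$ bounding the relevant norms), I would derive a differential inequality of the shape
\[
  \frac{d^-}{dt}|u_{n,m}(t)|_E\le -\lambda|u_{n,m}(t)|_E^{m}+c_M\bigl(1+|\Psi_n(t)-\Psi_m(t)|_E\bigr)^{m}.
\]
Combined with $u_{n,m}(-T)\to 0$ as $T\to+\infty$ and with the uniform convergence of $\Psi_n$, this forces $\limsup_{n,m}|u_{n,m}|_{C([-T,0];E)}=0$, so $\{\varphi_n\}$ is Cauchy in $C([-T,0];E)$ for every $T>0$ and has a limit $\varphi$. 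I expect this to be the main obstacle: the polynomial dissipativity of Hypothesis~\ref{H:F-mult} carries the inhomogeneous remainder $\kappa(1+|x|^m)$ and is weaker than the strong dissipativity used in Section~\ref{sec:additive}, so absorbing that remainder into the $-\lambda|u_{n,m}|^m$ sink and handling the second bracket (where $F$ is only assumed to have polynomial growth) requires careful bookkeeping.

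\textbf{Identification of the limit and decay.} Uniform convergence $\varphi_n\to\varphi$ on bounded intervals combined with the Lipschitz bound in Hypothesis~\ref{H:B-multiplicative} identifies $\eta=B(\varphi)\psi$: the decomposition $B(\varphi_n)\psi_n-B(\varphi)\psi=(B(\varphi_n)-B(\varphi))\psi_n+B(\varphi)(\psi_n-\psi)$ has its first summand converge strongly in $L^2$ on every bounded interval (since $|\varphi_n-\varphi|_E\to 0$ uniformly on compacts and $\psi_n$ is $L^2$-bounded) and its second summand converge weakly (because $B(\varphi)^\star\phi\in L^2$ for every test $\phi$, using the linear growth bound on $B$ and $|\varphi|_E\le M$). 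Passing to the limit in the mild equation shows that $\varphi$ is the controlled mild solution associated with $\psi$, hence $I_{-\infty,0}(\varphi)\le r$. Finally, the decay $\lim_{t\to-\infty}|\varphi(t)|_E=0$ follows as in the conclusion of Theorem~\ref{thm:compact-level-sets}, by applying the a priori bound of Lemma~\ref{lem:apriori-bounds} to $\varphi$ on $[-T,t]$ and letting $T\to+\infty$, so $\varphi\in\mathcal{K}(r)$ and the proof is complete.
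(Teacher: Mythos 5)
Your overall architecture --- uniform a priori bound via Lemma \ref{lem:apriori-bounds}, weak compactness of $v_n:=B(\varphi_n)\psi_n$ in $L^2$, and the final identification $v=B(\varphi)\psi$ --- matches the paper, and your identification step is essentially the paper's. The gap is in your ``Cauchy property'' step. You propose to estimate $u_{n,m}=\varphi_n-\varphi_m-\Psi_n+\Psi_m$ using the polynomial dissipativity \eqref{eq:polynomial-dissipative} of Hypothesis \ref{H:F-mult}, arriving at
\[
\frac{d^-}{dt}|u_{n,m}(t)|_E\le -\lambda|u_{n,m}(t)|_E^{m}+c_M\bigl(1+|\Psi_n(t)-\Psi_m(t)|_E\bigr)^{m}.
\]
This inequality cannot give $u_{n,m}\to0$: the remainder in \eqref{eq:polynomial-dissipative} is $\kappa(1+|x|_E^m)\ge\kappa>0$ no matter how small $h$ is, so the forcing term on the right stays bounded away from zero even after $\Psi_n-\Psi_m\to 0$, and the comparison principle \eqref{eq:ODE-mth-power} then only yields $\limsup_{n,m}|u_{n,m}|_E\le C_M$ for a fixed positive constant. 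A $-\lambda|h|^m$ sink absorbs large values of $|h|$; it cannot absorb an order-one inhomogeneity, so this is not a matter of careful bookkeeping --- the estimate fails. A second, related problem: you bound the bracket $F(\varphi_m+\Psi_n-\Psi_m)-F(\varphi_m)$ by ``polynomial growth,'' but polynomial growth only controls each term by a constant; to make the difference small as $\Psi_n-\Psi_m\to0$ you need uniform continuity of $F$ on bounded sets, which is part of Hypothesis \ref{H-F-properties} and not of Hypothesis \ref{H:F-mult}.

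The paper sidesteps all of this with a reduction rather than a new difference estimate: once $v_n=B(\varphi_n)\psi_n$ is bounded in $L^2((-\infty,0);H)$ and a weakly convergent subsequence is extracted, each $\varphi_n$ is precisely an additive-noise controlled trajectory with control $v_n$, hence lies in a level set of the additive-noise functional, and Theorem \ref{thm:compact-level-sets} immediately supplies a subsequence converging uniformly on bounded intervals. That theorem's Cauchy argument runs on the strong dissipativity \eqref{eq:dissipative-assum} (no inhomogeneous remainder) and the uniform continuity of $F$; Hypothesis \ref{H:F-mult} is used only for the a priori bound of Lemma \ref{lem:apriori-bounds}, since it is a strengthening retained alongside Hypothesis \ref{H-F-properties} rather than a replacement for it. If you reroute your middle step the same way --- use \ref{H:F-mult} only for the uniform bound and invoke Theorem \ref{thm:compact-level-sets} with controls $v_n$ for the convergence --- the remainder of your proof goes through.
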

\begin{proof}
  Let $\{\varphi_n\} \subset \mathcal{K}(r)$ be a sequence. Then there exists $|\psi_n|_{L^2((-\infty,0);H)} \leq  \sqrt{2( r + \frac{1}{n})}$ so that for any $-T< t <0$,
  \[\varphi_n(t) = S(t+T)\varphi_n(-T) + \int_{-T}^t S(t-s) F(\varphi_n(s)) ds + \int_{-T}^t S(t-s) Q B(\varphi_n(s)) \psi_n(s) ds.\]
  By \eqref{eq:apriori-concl},
  \[\sup_{-T \leq t \leq 0} |\varphi_n(t)|_E  \leq c \left( |\varphi_n(-T)|_E + 1 +|\psi|_{L^2((-T,0);H)}^2 \right)e^{c|\psi|_{L^2((-T,0);H)}^2}.\]
  If we let $-T \to -\infty$, we see that for any $n \in \nat$,
  \[\sup_{ t\leq 0} |\varphi_n(t)|_E \leq c(1+2(r+1))e^{1+2(r+1)}\]
  where we used the fact that $|\psi_n|_{L^2((-\infty,0);H)} \leq\sqrt{2( r+1)}$.
  Let $v_n(t) = B(\varphi_n(t))\psi_n(t)$.
  Then, by Hypothesis \ref{H:B-multiplicative},
  \[\sup_n|v_n|_{L^2((-\infty,0);H)} \leq \sup_n \kappa (1 + \sup_{t \leq 0} |\varphi_n(t)|_E)|\psi_n|_{L^2((-\infty,0);H)} < +\infty.\]
  Therefore, by Alaoglu's theorem there is a subsequence which we relabel $v_n$ such that $v_n$ converges weakly in $L^2((-\infty,0);H)$ to a limit $v$.
  Then, by Theorem \ref{thm:compact-level-sets} about compactness in the additive noise case, we see that
  $\varphi_n$ converges uniformly on bounded intervals to $\varphi$ solving
  \[\varphi(t) = S(t+T) \varphi(-T) + \int_{-T}^t S(t-s)F(\varphi(s))ds + \int_{-T}^t S(t-s)Q v(s).\]
  It remains to argue that $v(s) = B(\varphi(s))\psi(s)$ for some $\psi(s)$. In fact, again by Alaoglu's theorem, there is a subsequence of $\psi_n$ which converges weakly in $L^2((-\infty,0);H)$ to $\psi$. Then, because $\varphi_n \to \varphi$ uniformly on compact intervals, for any test function $\phi \in L^2((-\infty,0);H)$ with finite support,
  \[\begin{array}{l}
  \ds{\left|\int_{-\infty}^0 \left<B(\varphi_n(s))\psi_n(s) - B(\varphi(s))\psi(s), \phi(s)\right>_H ds  \right|}\\
  \vs
  \ds{\leq \left|\int_{-\infty}^0 \left< B(\varphi(s))(\psi_n(s) - \psi(s)),\phi(s) \right>_H ds \right|
  + \left|\int_{-\infty}^0 \left< (B(\varphi_n(s)) - B(\varphi(s)))\psi_n(s), \phi(s) \right>_H ds \right|}\\
  \vs
  \ds{\leq \left|\int_{-\infty}^0 \left<\psi_n(s) - \psi(s), B^\star(\varphi(s))\phi(s) \right>_H \right| + \left|\int_{-\infty}^0 \left<B(\varphi_n(s))- B(\varphi(s)) \psi_n(s), \phi(s) \right>_H \right|}\\
  \vs
  \ds{:=I^n_1 + I^n_2}
  \end{array}\]
  $I^n_1$ converges to zero because $\psi_n$ converge weakly to $\psi$ in $L^2((0,+\infty);H)$. $I^n_2$ converges to zero because $\varphi_n$ converges to $\varphi$ uniformly on compact intervals and $\phi$ has finite support. Therefore $B(\varphi_n(s))\psi_n(s) \to B(\varphi(s))\psi(s)$ weakly in $L^2((-\infty,0);H)$. Therefore $v(s) = B(\varphi(s))\psi(s)$ and  the limit $\varphi \in \mathcal{K}(r)$.
\end{proof}

\section{Exit time and exit place} \label{sec-exit-proofs}
Let $G \subset E$ be an open, connected, and bounded set, and let $0 \in G$. The goal of the next section is to characterize the exit time and exit place of the process $X^\e_x$ from the domain $G$.
\begin{Lemma}[Attraction to stable equilibrium] \label{lem:attraction}
  The unperturbed equation $X^0_x$ converges to $0$ uniformly for $x \in G$. That is
  \begin{equation} \label{eq:attraction}
  \lim_{t \to +\infty} \sup_{x \in G} |X^0_x|_E = 0.
  \end{equation}
  Also, for any $t>0$, $|X^0_x(t)|_E \leq |x|_E$.
\end{Lemma}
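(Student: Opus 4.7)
The plan is to derive an exponential decay estimate $|X^0_x(t)|_E \leq e^{-\lambda t} |x|_E$ for every $t \geq 0$ by combining the strong dissipativity from Hypothesis \ref{H-F-properties} with the fact that $F(0)=0$. Both conclusions of the lemma then follow immediately from this single bound, using the boundedness of $G$.

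To carry this out, I would first specialize the dissipativity inequality \eqref{eq:dissipative-assum} to the case $y=0$. Because $F(0)=0$, this yields the one-point estimate: for every $x\in D(A)$, there exists $x^\star\in\partial|x|_E$ such that
\[
\langle Ax + F(x),\, x^\star\rangle_{E,E^\star} \leq -\lambda|x|_E.
\]
Next, setting $u(t) := X^0_x(t)$ (the mild solution of $\frac{du}{dt} = Au + F(u)$ with $u(0)=x$), I would apply the same kind of weakly differentiable computation that the author already employs in Lemma \ref{lem:alpha-bound} and Theorem \ref{thm:init-value-continuity}. This produces $x^\star(t)\in\partial|u(t)|_E$ with
\[
\frac{d^-}{dt}|u(t)|_E \leq \langle Au(t) + F(u(t)),\, x^\star(t)\rangle_{E,E^\star} \leq -\lambda|u(t)|_E.
\]
A standard integration of this differential inequality (the same Gr\"onwall-type argument used in \eqref{eq:apriori-bound}) then yields $|X^0_x(t)|_E \leq e^{-\lambda t}|x|_E$.

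From this estimate, part (ii) is immediate, since $e^{-\lambda t}\leq 1$ for $t\geq 0$. For part (i), the assumption that $G\subset E$ is bounded gives $R:=\sup_{x\in G}|x|_E<+\infty$, and consequently
\[
\sup_{x\in G}|X^0_x(t)|_E \leq R\,e^{-\lambda t} \xrightarrow[t\to+\infty]{} 0,
\]
which is \eqref{eq:attraction}.

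The only delicate point is justifying the one-sided derivative formula $\frac{d^-}{dt}|u(t)|_E \leq \langle u'(t), x^\star(t)\rangle_{E,E^\star}$, since mild solutions need not lie in $D(A)$ pointwise. However, this step is invoked repeatedly in the paper's earlier lemmas in exactly the same form, so I would simply refer to those arguments (they can be made rigorous via Yosida approximations of $A$ followed by passage to the limit), thereby avoiding duplicating a routine justification.
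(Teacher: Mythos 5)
Your proof is correct and follows essentially the same route as the paper: the author likewise differentiates $t\mapsto X^0_x(t)$ weakly, applies the dissipativity inequality (with $y=0$ and $F(0)=0$) to get $\frac{d^-}{dt}|X^0_x(t)|_E\leq-\lambda|X^0_x(t)|_E$, integrates to obtain $|X^0_x(t)|_E\leq e^{-\lambda t}|x|_E$, and concludes from the boundedness of $G$. Your write-up is in fact somewhat more explicit about the specialization of \eqref{eq:dissipative-assum} and the justification of the one-sided derivative than the paper's own two-line argument.
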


\begin{proof}
  This is a straightforward consequence of dissipativity. $t \mapsto X^0_x(t)$ is weakly differentiable and
  \[\frac{d}{dt}X^0_x(t) = A X^0_x(t) + F(X^0_x(t)). \]
  Therefore,
  \[\frac{d^-}{dt}|X^0_x(t)|_E \leq -\lambda |X^0_x(t)|_E\]
  and it follows that
  \[|X^0_x(t)|_E \leq e^{-\lambda t} |x|_E.\]
  Our results follow because $G$ is bounded.
\end{proof}

\begin{Hypothesis}[Boundary Regularity] \label{H4}
  The boundary of $G$ is regular enough so that
  \[ V(\partial G) = V(\bar{G}^c) <+\infty.\]
\end{Hypothesis}

\begin{Remark}
  First, we observe that it is always the case that
  \[V(\partial G) \leq V(\bar{G}^c).\]
  If $ \tilde{y} \in \bar{G}^c$ and $V(\tilde{y})< +\infty$, then by Corollary \ref{cor:V-is-minimum} there exists a trajectory $\varphi \in C((-\infty,0);E)$ such that
  \[\lim_{t \to -\infty} \varphi(t) = 0 \text{ and } \varphi(0) = \tilde{y}\]
  with
  \[I_{-\infty,0}(\varphi) =V(\tilde{y}) < +\infty.\]
  Because this trajectory is continuous and we assumed that $0 \in G$, and $\tilde{y} \in \bar{G}^c$, there must be some $t<0$ for which $\varphi(t) \in \partial G$.
  But then, by the definition of $V$,
  \[V(\partial G) \leq V(\varphi(t)) \leq I_{-\infty,t}(\varphi) \leq I_{-\infty,0}(\varphi) = V(\bar{G}^c).\]

  To see why such a boundary regularity assumption is important, consider a punctured ball. Let $a\in E$, be such that $0<|a|_E<1$ and $V(a)<\inf_{|x|_E = 1} V(x)$. Let $G = \{x\in E: |x|<1, x \not = a\}$. We should not expect exit place results to be based on the values of $V$ on $\partial G$, because $a \in \partial G$ but $a$ is far from $\bar{G}^c$.
\end{Remark}

Now we introduce the stopping times
 \begin{equation}
   \tau^\e_x = \inf \{t>0: X^\e_x(t) \not \in G \}.
 \end{equation}
 \begin{Theorem} \label{exit-thm}
  Under Hypotheses \ref{H:linear}, \ref{H:QandZ}, \ref{H4} and either Hypothesis \ref{H-F-properties} if the equation has additive noise or Hypotheses \ref{H:B-multiplicative} and \ref{H:F-mult} if the equation has multiplicative noise, for any $x\in G$,
 \begin{enumerate}
   \item[i.]\begin{equation} \label{expected-exit-time-eq}
     \lim_{\e \to 0} \e \log \E \tau^\e_x = V(\partial G).
   \end{equation}
   \item[ii.]For any $\eta>0$,
   \begin{equation} \label{exit-time-eq}
     \lim_{\e \to 0} \Pro \left( e^{\frac{1}{\e}(V(\partial G) - \eta)} \leq \tau^\e_x \leq e^{\frac{1}{\e} (V(\partial G) + \eta)} \right) = 1.
   \end{equation}
   \item[iii.] For any closed $N \subset \partial G$ with $V(N) > V(\partial G)$,
   \begin{equation} \label{exit-place-eq}
     \lim_{\e \to 0} \Pro \left( X^\e_x(\tau^\e_x) \in N \right) = 0.
   \end{equation}
 \end{enumerate}
 \end{Theorem}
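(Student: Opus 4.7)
The strategy is to follow the Freidlin--Wentzell scheme as adapted to the infinite-dimensional setting in \cite[Chapter 5]{dz}, replacing the compactness of bounded sets in $\reals^d$ by the compactness of the level sets $\mathcal{K}(r)$ established in Theorems \ref{thm:compact-level-sets} and \ref{thm:compact-mult}, and exploiting the dissipativity of $A+F$ together with the uniform large deviations principle of Theorem \ref{thm:LDP}. The basic picture is as follows. By Lemma \ref{lem:attraction}, the deterministic trajectory $X^0_x$ is attracted to $0$ uniformly for $x \in G$, so from any starting point the noisy process will be driven quickly into a small ball $\bar B(0,\rho) \subset G$ with overwhelming probability. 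Once the process sits in $\bar B(0,\rho)$, the aim is to control the probability of exit from $G$ during a time window of fixed length $T$. Estimates on $\E \tau^\e_x$ and on $\tau^\e_x$ in probability are then obtained by iterating this one-step estimate through the strong Markov property.

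For the upper bound $\limsup_{\e \to 0} \e \log \E \tau^\e_x \leq V(\partial G)$, I would use Hypothesis \ref{H4} and Corollary \ref{cor:V-is-minimum} to extract, for any $\eta>0$, a minimizer $\varphi$ with $I_{-\infty,0}(\varphi) \leq V(\partial G)+\eta/4$, $\lim_{t \to -\infty}|\varphi(t)|_E = 0$ and $\varphi(0) \in \bar G^c$. Truncating at a sufficiently negative time and invoking Theorem \ref{thm:init-value-continuity} produces, for some finite $T>0$, a control path on $[0,T]$ that starts in $\bar B(0,\rho)$, has energy at most $V(\partial G)+\eta/2$, and ends in $\bar G^c$. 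The lower LDP bound \eqref{LDP-lower} then gives $\inf_{|x|_E \leq \rho} \Pro(X^\e_x\text{ exits } G \text{ by time } T) \geq \exp(-(V(\partial G)+\eta)/\e)$ for small $\e$. Combining this with the deterministic drift into $\bar B(0,\rho)$ and iterating via the Markov property, $\tau^\e_x$ is stochastically dominated by $T$ times a geometric random variable with that parameter, which yields both the upper bound on $\E\tau^\e_x$ and the upper tail in \eqref{exit-time-eq}.

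For the lower bound, fix $\eta>0$. I would first show that there exist $T_0,\rho>0$ such that no trajectory $X^\psi_x$ with $|x|_E \leq \rho$ and $\tfrac{1}{2}|\psi|_{L^2(0,T_0;H)}^2 \leq V(\partial G)-\eta$ reaches $\bar G^c$ by time $T_0$. If this failed, one could combine a sequence of approximate minimizers with the compactness of $\mathcal{K}(V(\partial G)-\eta/2)$ and the continuity in the initial condition to extract a trajectory on $(-\infty, T_0]$ starting from $0$ with infinite-horizon energy strictly less than $V(\bar G^c)$ that still hits $\bar G^c$, contradicting Hypothesis \ref{H4}. Then \eqref{LDP-upper} yields $\sup_{|x|_E \leq \rho}\Pro(\tau^\e_x \leq T_0) \leq \exp(-(V(\partial G)-\eta)/\e)$, and a stopping-time argument alternating the return-to-$\bar B(0,\rho)$ phase with $T_0$-length escape attempts produces $\E \tau^\e_x \geq T_0 \exp((V(\partial G)-2\eta)/\e)$ for small $\e$. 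Chebyshev's inequality then gives the lower bound in \eqref{exit-time-eq}.

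For the exit place statement, fix a closed $N \subset \partial G$ with $V(N) > V(\partial G)$, pick $\eta'$ with $V(\partial G)+\eta' < V(N)-\eta'$, and take $\delta>0$ small. Using the same compactness plus continuity argument as in the lower bound, trajectories starting in $\bar B(0,\rho)$ that reach a $\delta$-neighborhood of $N$ by time $T_1$ have energy at least $V(N)-\eta'$, so \eqref{LDP-upper} controls the probability of such an exit by $\exp(-(V(N)-\eta')/\e)$ per attempt. Meanwhile the total number of $T_1$-attempts before exit is at most of order $\exp((V(\partial G)+\eta')/\e)$ by the upper bound already obtained, and the product tends to zero, giving \eqref{exit-place-eq}. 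The main obstacle throughout is the passage between the infinite-horizon quasipotential $V$ and the finite-time rate functions $I_{0,T}$ that appear in the uniform LDP; in an infinite-dimensional Banach space this cannot be handled by compactness of $\bar G$, and it is precisely the compactness of the level sets $\mathcal{K}(r)$ proved in Sections \ref{sec:linear}--\ref{sec:multiplicative}, together with Theorem \ref{thm:init-value-continuity}, that lets the classical approximation arguments go through.
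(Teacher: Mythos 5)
Your overall architecture is the paper's: the Freidlin--Wentzell/Dembo--Zeitouni scheme with compactness of bounded sets replaced by compactness of the level sets of $I_{-\infty,0}$, combined with the uniform LDP of Theorem \ref{thm:LDP}, the Lipschitz dependence on initial data (Theorem \ref{thm:init-value-continuity}), the uniform attraction of Lemma \ref{lem:attraction}, and Hypothesis \ref{H4}. Your upper bound is exactly the paper's Lemmas \ref{et0}--\ref{et1} followed by the geometric-series/Markov iteration, and your compactness-based contradiction argument for the "no cheap escape from a small ball" step is the paper's Lemma \ref{et2}.

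Where your sketch has a genuine gap is in the bookkeeping of the lower bound (and hence of the exit place estimate). You organize the iteration as fixed windows of length $T_0$ started from $\bar{B}(0,\rho)$, separated by "return-to-ball" phases. Two things are unaccounted for. First, after a failed window the process sits at an arbitrary point of $G$, possibly very close to $\partial G$, and the probability of exiting \emph{during the return phase} is not exponentially small with exponent $V(\partial G)$ (the cost of reaching $\bar{G}^c$ from a point near the boundary can be arbitrarily small); your per-cycle estimate only controls exits that occur inside the $T_0$-windows. The paper resolves this by decomposing the path into excursions between the small ball $\gamma_\rho$ and the intermediate sphere $\Gamma_\rho$: every exit must occur at one of the down-crossing times $\tau^\e_{m,x}$, and Lemma \ref{et4} (built on Lemma \ref{et2}) bounds the exit-before-return probability from $\Gamma_\rho$ by $e^{-(V(\partial G)-\eta)/\e}$, while Lemma \ref{et3} rules out long sojourns in $G\setminus\gamma_\rho$. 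Second, to convert "many excursions are needed" into "much time elapses" one must exclude fast excursions from $\gamma_\rho$ to $\Gamma_\rho$; this is Lemma \ref{et5}, whose proof requires the genuinely infinite-dimensional fact that $\|L_t\|_{\mathcal{L}(L^2([0,t];H),E)}\to 0$ as $t\to 0$ (Corollary \ref{cor:L_t->0}, itself a consequence of the compactness of $L_t$). Neither of these two inputs appears in your sketch, and without them the claimed bound $\E\tau^\e_x \geq T_0\,e^{(V(\partial G)-2\eta)/\e}$ does not follow from the per-window estimate alone.
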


 \subsection{Some preliminary lemmas}
 \begin{Lemma} \label{et0}
   For any $\eta>0$, there exist $\rho>0$, $T_1>0$, $\delta>0$, and $\psi \in L^2([0,T_1];H)$ such that
   \[\frac{1}{2} \left| \psi \right|_{L^2([0,T_1];H)}^2 \leq V(\partial G) + \eta.\]
   and for all $|x|_E \leq \rho$,
   \[\textnormal{dist}_E(X^\psi_x(T_1), G) > \delta.\]
 \end{Lemma}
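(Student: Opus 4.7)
The plan is to combine Hypothesis \ref{H4} with the compactness results (Corollary \ref{cor:V-is-minimum} in the additive case, the analogous statement following from Theorem \ref{thm:compact-mult} in the multiplicative case) to produce a near-optimal trajectory that actually reaches $\bar{G}^c$ rather than only $\partial G$. The point is that a landing in the open set $\bar{G}^c$ has strictly positive distance from $\bar{G}$, which is exactly the buffer $\delta$ we are asked to exhibit.

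More precisely, by Hypothesis \ref{H4}, $V(\bar{G}^c) = V(\partial G) < +\infty$, so for the given $\eta>0$ I can choose $\varphi \in C((-\infty,0];E)$ with $\lim_{t\to-\infty}|\varphi(t)|_E = 0$, $\varphi(0) \in \bar{G}^c$, and $I_{-\infty,0}(\varphi) \leq V(\partial G) + \eta/2$. Because $\bar{G}$ is closed and $\varphi(0) \notin \bar{G}$, we have $2\delta := \dist_E(\varphi(0),\bar{G}) > 0$, and this $\delta$ will be the constant in the lemma. Corollary \ref{cor:I-is-minimum} furnishes an associated $\tilde{\psi} \in L^2((-\infty,0);H)$ with $\frac{1}{2}|\tilde{\psi}|_{L^2((-\infty,0);H)}^2 \leq V(\partial G) + \eta/2$ which generates $\varphi$.

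Next, pick a modulus $\sigma > 0$ (to be fixed below) and choose $T_1>0$ large enough that $|\varphi(-T_1)|_E < \sigma/2$. Set $y := \varphi(-T_1)$ and $\psi(s) := \tilde{\psi}(s - T_1)$ for $s \in [0,T_1]$. By time translation, $X^\psi_y(T_1) = \varphi(0)$ and $\frac{1}{2}|\psi|_{L^2([0,T_1];H)}^2 \leq V(\partial G) + \eta/2 < V(\partial G) + \eta$. The remaining task is to fix $\sigma$ and $\rho$ so that $|x|_E \leq \rho$ implies $|X^\psi_x(T_1) - \varphi(0)|_E < \delta$, since then $\dist_E(X^\psi_x(T_1), G) \geq 2\delta - \delta = \delta$. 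In the additive-noise case, Theorem \ref{thm:init-value-continuity} yields the contraction $|X^\psi_x(T_1) - X^\psi_y(T_1)|_E \leq |x-y|_E$, so the choice $\sigma := \delta$ and $\rho := \delta/2$ gives $|x-y|_E \leq \rho + |y|_E < \delta$ as required.

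The main obstacle is the multiplicative-noise case, where Theorem \ref{thm:init-value-continuity} does not apply verbatim. Here continuity of $x \mapsto X^\psi_x(T_1)$ at $y$ must instead be obtained by a subdifferential estimate on the difference $X^\psi_x - X^\psi_y$, combining the strong dissipativity in Hypothesis \ref{H:F-mult} with the Lipschitz bound on $B$ from Hypothesis \ref{H:B-multiplicative} and closing by Gr\"onwall; the resulting Lipschitz constant depends on $|\psi|_{L^2}$, but because $\psi$ is fixed \emph{before} we vary $x$ this is harmless and still yields a usable modulus of continuity from which $\sigma$ and then $\rho$ are extracted exactly as above. After that the lemma is just an assembly of constants in the correct order: first $\delta$ from $\varphi(0)$, then the continuity modulus $\sigma$, then $T_1$ from the decay of $\varphi$ at $-\infty$, and finally $\rho$.
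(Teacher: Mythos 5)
Your proof is correct and follows essentially the same route as the paper's: Hypothesis \ref{H4} together with Corollary \ref{cor:V-is-minimum} yields a near-minimizing trajectory $\varphi$ ending at a point of $\bar{G}^c$ at strictly positive distance from $G$, which is then truncated at a large negative time where $|\varphi(-T_1)|_E$ is small and transferred to all nearby initial data via continuity of $x \mapsto X^\psi_x$, with $\rho$ and $\delta$ both taken as fixed fractions of that positive distance. Your observation about the multiplicative-noise case is well taken — the paper's proof silently invokes \eqref{eq:init-value-continuity}, which is only established for additive noise — but this refinement does not change the structure of the argument.
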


 \begin{proof}
   First, by Hypothesis \ref{H4}, there exists $y \in \bar{G}^c$ such that $V(y) < V(\partial G) + \eta$. Then, by Corollary \ref{cor:V-is-minimum}, there must exist $\varphi \in C((-\infty,0);E)$ with
   \[\lim_{t \to -\infty}|\varphi(t)|_E=0, \ \varphi(0)= y, \text{ and } I_{-\infty,0}(\varphi) < V(\partial G) + \eta.\]

   Because $y \in \bar{G}^c$, the distance, $d := \textnormal{dist}_E(y, G)$, is strictly positive. Because \\$\lim_{t\to -\infty} \varphi(t) = 0$, we can choose $T_1>0$ to be large enough so that $|\varphi(-T_1)|_E < \frac{d}{3}$. Set $x_1 = \varphi(-T_1)$.
   By \eqref{I_t,T-def}, it is clear that $\varphi(t-T_1) = X^\psi_{x_1}(t)$ for some $\psi$ with
   \[\frac{1}{2}|\psi|_{L^2([0,T_1];H)}^2 < V(\partial G) + \frac{2 \eta}{3}.\]
   Then, by \eqref{eq:init-value-continuity}, if $|x|_E  < \frac{d}{3}$,
   \[\left|y - X^\psi_x(T_1) \right|_E = \left| X^\psi_{x_1}(T_1) - X^\psi_{x}(T_1) \right|_E \leq  |x_1 -x|_E \leq \frac{2d}{3}. \]
   In particular, this means that for all $|x|_E< \frac{d}{3}$,
   \[ \textnormal{dist}_E(X^\psi_x(T_1), G) \geq \dist_E(y, G) - |y - X^\psi_x(T_1)|_E > \frac{d}{3}.\]
      Our result follows with $\rho = \frac{d}{3}$ and $\delta = \frac{d}{3}$.
 \end{proof}

 \begin{Lemma} \label{et1}
   For any $\eta>0$, we can find $T>0$ such that
   \begin{equation}
     \liminf_{\e \to 0 } \e \log \left(\inf_{ x \in G} \Pro(\tau^\e_x <T) \right) > -(V(\partial G) + \eta).
   \end{equation}
 \end{Lemma}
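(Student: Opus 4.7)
The plan is to construct a single deterministic controlled trajectory that first drifts any $x \in G$ close to the origin under the unperturbed flow, and then applies the control supplied by Lemma \ref{et0} to push the endpoint a fixed distance outside $\bar G$. If the stochastic trajectory $X^\e_x$ stays $\delta$-close to this deterministic trajectory on $[0,T]$, then it must exit $G$ before time $T$; the uniform LDP from Theorem \ref{thm:LDP} will then supply the required exponential lower bound.

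Given $\eta > 0$, I first apply Lemma \ref{et0} with $\eta/2$ in place of $\eta$, obtaining $\rho, T_1, \delta > 0$ and a control $\psi \in L^2([0,T_1];H)$ with $\frac{1}{2}|\psi|_{L^2([0,T_1];H)}^2 \leq V(\partial G) + \eta/2$ and $\dist_E(X^\psi_{x_0}(T_1), G) > \delta$ for all $|x_0|_E \leq \rho$. Since $G$ is bounded, Lemma \ref{lem:attraction} supplies $T_0$ with $|X^0_x(T_0)|_E < \rho$ uniformly for $x \in G$. Setting $T = T_0 + T_1$ and concatenating the controls as $\bar\psi \equiv 0$ on $[0,T_0]$ and $\bar\psi(t) = \psi(t - T_0)$ on $(T_0,T]$, the time-homogeneity of the control equation gives $X^{\bar\psi}_x \equiv X^0_x$ on $[0,T_0]$ and $X^{\bar\psi}_x(T) = X^\psi_{X^0_x(T_0)}(T_1)$, so $\dist_E(X^{\bar\psi}_x(T), G) > \delta$ for every $x \in G$, while $\frac{1}{2}|\bar\psi|_{L^2([0,T];H)}^2 = \frac{1}{2}|\psi|_{L^2([0,T_1];H)}^2 \leq V(\partial G) + \eta/2$.

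Choosing $R$ with $G \subset \{|x|_E \leq R\}$ and observing that the event $\{|X^\e_x - X^{\bar\psi}_x|_{C([0,T];E)} < \delta\}$ forces $X^\e_x(T) \notin G$ and hence $\tau^\e_x < T$, the uniform lower bound of Theorem \ref{thm:LDP} applied to the single control $\bar\psi$ yields
\[
\liminf_{\e \to 0} \e \log \left( \inf_{x \in G} \Pro(\tau^\e_x < T) \right) \geq -\tfrac{1}{2}|\bar\psi|_{L^2([0,T];H)}^2 \geq -(V(\partial G) + \eta/2),
\]
which is strictly greater than $-(V(\partial G) + \eta)$.

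The only genuinely delicate point is that the infimum runs over all of $G$ rather than over a compact subset; this is precisely the reason that the uniformity of Theorem \ref{thm:LDP} over \emph{bounded} (and not merely compact) sets of initial conditions, as established in Sections \ref{sec:linear}--\ref{sec:multiplicative}, is essential. Everything else amounts to composing the unperturbed flow with the Lemma \ref{et0} control and reading off the $L^2$ cost.
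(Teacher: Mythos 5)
Your proof is correct, and it reaches the same conclusion from the same two ingredients (Lemma \ref{et0} plus the uniform attraction of Lemma \ref{lem:attraction}), but the gluing mechanism is genuinely different from the paper's. The paper applies the LDP lower bound twice --- once on $[0,T_2]$ with the zero control to get $X^\e_x(T_2)$ into the ball of radius $\rho$, and once on $[0,T_1]$ starting from $|x|_E\le\rho$ with the exit control from Lemma \ref{et0} --- and then multiplies the two probability bounds using the Markov property at time $T_2$. You instead concatenate the two deterministic controls into a single $\bar\psi\in L^2([0,T];H)$ with the same $L^2$ cost, verify via time-homogeneity and Theorem \ref{thm:init-value-continuity} (through Lemma \ref{et0}) that $\dist_E(X^{\bar\psi}_x(T),G)>\delta$ for every $x\in G$, and apply the uniform LDP lower bound exactly once over $[0,T]$. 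Your route avoids the Markov property entirely and is arguably cleaner, at the price of needing the LDP on the longer interval $[0,T_0+T_1]$ and needing the endpoint estimate to hold for the $x$-dependent family $\{X^{\bar\psi}_x\}_{x\in G}$ with a single fixed control --- which you correctly check. Two small points: you are right to flag that the uniformity over the bounded (non-compact) set $G$ is the crux, and your passage from $X^\e_x(T)\notin\bar G$ to the strict inequality $\tau^\e_x<T$ is justified since $\dist_E(X^\e_x(T),G)>0$ forces exit strictly before $T$ by path continuity.
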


 \begin{proof}
   Let $\psi$, $\rho$, $\delta$, and $T_1$ satisfy Lemma \ref{et0}.
   Then, because $\dist_E(X^\psi_x(T_1),G)>\delta$, for $|x|_E< \rho$, we have the inclusion
   \[\{\tau^\e_x \leq T_1\} \supset \{|X^\e_x - X^\psi_x|_{C([0,T_1];E)} < \delta\}.\]
   By \eqref{LDP-lower}, this implies thatd
   \begin{equation}\label{less-than-rho-bound}
   \begin{array}{l}
     \ds{\lim_{\e \to 0} \e \log \left( \inf_{|x|_E \leq \rho} \Pro \left( \tau^\e_x \leq T_1 \right) \right)}\\
     \ds{\geq \lim_{\e \to 0}\e \log \left( \inf_{|x|_E \leq \rho} \Pro \left( |X^\e_x - X^\psi_x|_{C([0,T_1];E)} < \delta \right)  \right)}\\
     \ds{\geq -(V(\partial G) + \eta).}
   \end{array}
   \end{equation}
   By \eqref{eq:attraction}, there exists $T_2$ such that $\sup_{x \in G}|X^0_x(T_2)|_E < \frac{\rho}{2}$.

   By the Markov property,
   \begin{equation*}
   \begin{array}{l}
     \ds{\inf_{x \in G} \Pro \left( \tau^\e_x \leq T_1 +T_2 \right) \geq \inf_{x \in G } \Pro \left( |X^\e_x(T_2)|<\rho \text{ and } \tau^\e_{X^\e_x(T_2)} \leq T_1  \right)}\\
      \ds{\geq  \inf_{x \in G } \Pro \left(|X^\e_x - X^0_x|_{C([0,T_2])} < \frac{\rho}{2} \right)  \inf_{|x|_E < \rho } \Pro \left( \tau^\e_x \leq T_1 \right) }
   \end{array}
   \end{equation*}
   which means that, by \eqref{less-than-rho-bound},
   \[\liminf_{\e \to 0} \e \log \left( \inf_{x \in G } \Pro(\tau^\e_x \leq T_1 + T_2) \right) \geq -(V(\partial G) + \eta).\]
\end{proof}

   \begin{Lemma} \label{et2}
     Let $N \subseteq \partial G$ be closed and let $0< \nu < V(N)$. Then there exists $\rho>0$, such that for any $\varphi \in C([0,T];E)$, with $|\varphi(0)|<\rho$ and $I_{0,T}(\varphi)\leq \nu$, it holds that
     \[\inf_{t \in [0,T]}\textnormal{dist}_E(\varphi(t),N)> |\varphi(0)|_E.\]
   \end{Lemma}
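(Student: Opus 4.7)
The plan is to argue by contradiction and reduce the statement to a lower semicontinuity property of the quasipotential $V$ with respect to thickenings of $N$. Suppose the lemma fails: then for each $n$ one finds $T_n > 0$ and $\varphi_n \in C([0,T_n];E)$ with $|\varphi_n(0)|_E < 1/n$, $I_{0,T_n}(\varphi_n) \leq \nu$, and a time $t_n \in [0,T_n]$ with $\dist_E(\varphi_n(t_n), N) \leq |\varphi_n(0)|_E$. Write $x_n = \varphi_n(0)$, so that $|x_n|_E \to 0$.

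The first step is to show that $V(\overline{B_\delta(N)}) > \nu$ for all sufficiently small $\delta > 0$, where $B_\delta(N) = \{y \in E : \dist_E(y,N) < \delta\}$. Were this to fail, one would have $\delta_k \to 0$ and, by Corollary \ref{cor:V-is-minimum}, trajectories $\xi_k \in C((-\infty,0];E)$ with $\lim_{t \to -\infty}|\xi_k(t)|_E = 0$, $\xi_k(0) \in \overline{B_{\delta_k}(N)}$, and $I_{-\infty,0}(\xi_k) \leq \nu$. The family $\{\xi_k\}$ sits in the level set $\mathcal{K}(\nu)$, which is compact in the topology of uniform convergence on bounded intervals by Theorem \ref{thm:compact-level-sets} (or Theorem \ref{thm:compact-mult} in the multiplicative case). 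Extracting a convergent subsequence produces a limit $\xi$ with $\lim_{t\to-\infty}|\xi(t)|_E=0$ and $I_{-\infty,0}(\xi) \leq \nu$; by closedness of $N$ and $\xi_k(0) \to \xi(0)$, one has $\xi(0) \in N$, so $V(N) \leq \nu$, contradicting the hypothesis $V(N) > \nu$.

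For the main argument, fix $\eta \in (0, V(N) - \nu)$ and pick $\delta > 0$ from the first step so that $V(\overline{B_\delta(N)}) > \nu + \eta$. For each $n$, choose a near-minimizer $\psi_n \in L^2([0,T_n];H)$ with $\varphi_n = X^{\psi_n}_{x_n}$ and $\tfrac{1}{2}|\psi_n|^2_{L^2([0,T_n];H)} \leq \nu + 1/n$. Running the same control from the zero initial condition, Theorem \ref{thm:init-value-continuity} (whose dissipativity-based estimate extends to the multiplicative setting under the hypotheses of Section \ref{sec:multiplicative}) gives
\[|X^{\psi_n}_0(t) - X^{\psi_n}_{x_n}(t)|_E \leq |x_n|_E, \qquad t \in [0, T_n],\]
and therefore $\dist_E(X^{\psi_n}_0(t_n), N) \leq 2|x_n|_E < \delta$ for $n$ sufficiently large.

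Finally, I prepend the zero trajectory to exhibit a global path witnessing $V(\overline{B_\delta(N)}) \leq \nu + 1/n$. Define $\tilde X_n(s) = 0$ for $s \leq 0$ and $\tilde X_n(s) = X^{\psi_n}_0(s)$ for $s \in [0,t_n]$. Because $F(0) = 0$, the zero control drives the zero state on $(-\infty, 0]$, so $\tilde X_n$ is a genuine mild solution on $(-\infty, t_n]$ with control $\psi_n \mathbbm{1}_{[0,t_n]}$; it decays to $0$ as $t \to -\infty$ trivially and satisfies $\tilde X_n(t_n) \in \overline{B_\delta(N)}$. After the time shift $s \mapsto s - t_n$, the definition of $V$ yields $V(\overline{B_\delta(N)}) \leq \tfrac{1}{2}|\psi_n|^2_{L^2([0,T_n];H)} \leq \nu + 1/n$. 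For $n$ large with $1/n < \eta$ this contradicts the choice of $\delta$, completing the proof. The main technical obstacle is the lower semicontinuity of $V$ with respect to closed-set thickenings that is established in the first step; it is precisely what the compactness of the level sets $\mathcal{K}(r)$ proved earlier is designed to deliver.
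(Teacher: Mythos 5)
Your argument is correct, and it uses the same essential ingredients as the paper's proof --- the compactness of the level sets $\mathcal{K}(\nu)$ (Theorem \ref{thm:compact-level-sets}), the contraction in the initial condition (Theorem \ref{thm:init-value-continuity}), and the device of re-running the near-optimal control from $0$ and prepending the identically-zero trajectory, which is legitimate because $F(0)=0$. The difference is one of decomposition: the paper applies compactness \emph{directly} to the sequence of zero-extended competitor paths $\varphi_n(t)=X^{\psi_n}_0(t+T_n)$, extracts a convergent subsequence, and reads off $\varphi(0)\in N$ from $\dist_E(\varphi_n(0),N)\le 2|x_n|_E\to 0$, reaching the contradiction $V(N)\le\nu$ in a single pass. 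You instead factor the argument through the intermediate statement that $V(\overline{B_\delta(N)})>\nu$ for all small $\delta>0$ (proved by the same compactness argument, applied to exact minimizers furnished by Corollary \ref{cor:V-is-minimum}), and then contradict it with your competitor construction. Your route is a step longer, but the lower semicontinuity of $V$ under closed thickenings of $N$ is a clean, reusable statement, and isolating it makes transparent exactly what the compactness of $\mathcal{K}(r)$ is buying. Two minor remarks: your version handles an arbitrary near-hitting time $t_n\in[0,T_n]$ rather than assuming it occurs at $T_n$, which is slightly more careful than the paper's phrasing; and your parenthetical claim that the estimate $|X^\psi_{x_1}-X^\psi_{x_2}|_{C([0,T];E)}\le|x_1-x_2|_E$ extends verbatim to the multiplicative case is not immediate (the control term $QB(X^\psi_x)\psi$ is state-dependent, so one expects at best a Gr\"onwall-type bound of the form $|x_1-x_2|_E\,e^{c|\psi|^2_{L^2}}$), but since $|\psi_n|_{L^2}$ is uniformly bounded by $\sqrt{2(\nu+1)}$ here, such a weaker bound still suffices; the paper's own proof relies on the same estimate without further comment.
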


   \begin{proof}
     Suppose by contradiction there exist sequences $\{T_n\} \subset \mathbb{R}$, $\{x_n\} \subset E$,\\ $\{\psi_n\} \subset L^2([0,T_n];H)$, such that
     \[\lim_{n \to 0} |x_n|_E = 0, \ \textnormal{dist}_E(X^{\psi_n}_{x_n}(T_n), N) \leq |x_n|_E, \text{ and } I_{0,T_n}(X^{\psi_n}_{x_n}) \leq \frac{1}{2} |\psi_n|_{L^2([0,T_n];H)}^2 \leq \nu.\]
       Then we define
     \[\varphi_n(t) = \left\{ \begin{array}{ll} 0 & \text{if } t \leq -T_n \\ X^{\psi_n}_0(t+T_n) & \text{if } -T_n < t \leq 0 \end{array}  \right. \]
     Notice that
     \[I_{-\infty,0}(\varphi_n) = I_{0,T_n}(X^{\psi_n}_0) \leq \nu.\]
     By Theorem \ref{thm:compact-level-sets}, $I_{-\infty,0}$ has compact level sets. Therefore, because $I_{-\infty,0}(\varphi_n) \leq \nu$ for all $n$, there exists a subsequence of $\{\varphi_n\}$ (which we relabel as $\{\varphi_n\}$) that converges to a limit $\varphi$, with $I_{-\infty,0}(\varphi) \leq \nu$.

     We also notice that by \eqref{eq:init-value-continuity},
     \[\textnormal{dist}_E(\varphi_n(0), N) \leq |X^{\psi_n}_{x_n}(T_n) - X^{\psi_n}_0(T_n)|_E + \textnormal{dist}_E(X^{\psi_n}_{x_n}(T_n),N)  \leq 2 |x_n|_E\]
     Therefore, because $N$ is closed and $|x_n|_E\to 0$, it follows that $\lim_{n \to \infty}\varphi_n(0)= \varphi(0) \in N$. This is a contradiction because $V(N) \leq I_{-\infty,0}(\varphi) \leq \nu < V(N)$.
   \end{proof}
   Let
   \begin{equation}
    \begin{array}{l}
     \ds{\Gamma_\rho = \{x \in E: |x|_E = 2\rho \},}\\
     \vs
     \ds{\gamma_\rho = \{x \in E: |x|_E \leq \rho \},}\\
     \vs
      \ds{\text{ and } \tau^\e_{1,x} = \inf\{t>0: X^\e_x(t) \in \gamma_\rho \cup \partial G\}.}
    \end{array}
   \end{equation}
   \begin{Lemma} \label{et3}
     For any $\rho>0$, such that $\gamma_\rho \subset G$,
   \begin{equation}
       \limsup_{t \to +\infty} \limsup_{\e \to 0} \e \log \left(\sup_{x \in G} \Pro \left( \tau^\e_{1,x} \geq t \right) \right) = -\infty.
     \end{equation}
   \end{Lemma}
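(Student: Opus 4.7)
The plan is to combine three ingredients: uniform attraction of the deterministic trajectory to $0$, the uniform large deviations upper bound \eqref{LDP-upper}, and the strong Markov property at a well-chosen time $T_0$.

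First I would invoke Lemma \ref{lem:attraction} to choose $T_0>0$ with $\sup_{x\in G}|X^0_x(T_0)|_E<\rho/2$. If $\tau^\e_{1,x}>T_0$ then in particular $X^\e_x(T_0)\notin\gamma_\rho$, and together with $|X^0_x(T_0)|_E<\rho/2$ this forces $|X^\e_x-X^0_x|_{C([0,T_0];E)}\ge\rho/2$. Consequently,
\[
\sup_{x\in G}\Pro(\tau^\e_{1,x}>T_0)\;\le\;\sup_{x\in G}\Pro\!\left(|X^\e_x-X^0_x|_{C([0,T_0];E)}\ge\rho/2\right).
\]

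Second, I would show that there exists $r>0$ (small, depending on $\rho$, $T_0$ and the diameter of $G$) such that
\[
K^x_{0,T_0}(r)\;\subset\;\left\{\varphi\in C([0,T_0];E):|\varphi-X^0_x|_{C([0,T_0];E)}<\rho/4\right\}
\]
uniformly for $x\in G$. For additive noise this reduces to Lemma \ref{lem:continuity} applied with $\Phi_0(\cdot)=S(\cdot)x$, whose $C([0,T_0];E)$-norm is bounded uniformly in $x\in G$ by Hypothesis \ref{H:linear}, combined with the estimate $|Z^\psi|_{C([0,T_0];E)}\le c\sqrt{2r}$ coming from the boundedness of the operator $L_{T_0}$ provided by Theorem \ref{thm:L_t-compact}. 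For multiplicative noise the a priori bound of Lemma \ref{lem:apriori-bounds} plays the analogous role via a Gr\"onwall-type argument for the difference $X^\psi_x-X^0_x$.

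Once $r$ is fixed, applying the uniform LDP upper bound \eqref{LDP-upper} with $\delta=\rho/4$ yields
\[
\limsup_{\e\to0}\e\log p_\e\;\le\;-r<0,\qquad\text{where}\qquad p_\e:=\sup_{x\in G}\Pro(\tau^\e_{1,x}>T_0).
\]
By the strong Markov property at time $T_0$, conditioned on $\{\tau^\e_{1,x}>(k-1)T_0\}$ we have $X^\e_x((k-1)T_0)\in G\setminus\gamma_\rho\subset G$, so an induction on $k$ gives $\sup_{x\in G}\Pro(\tau^\e_{1,x}>kT_0)\le p_\e^k$. For arbitrary $t>0$,
\[
\e\log\sup_{x\in G}\Pro(\tau^\e_{1,x}\ge t)\;\le\;\lfloor t/T_0\rfloor\,\e\log p_\e,
\]
and sending first $\e\to0$ and then $t\to+\infty$ gives the desired double limit.

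The main obstacle will be Step 3, namely the uniform-in-$x\in G$ inclusion of a small level set $K^x_{0,T_0}(r)$ into a $\rho/4$-neighborhood of $X^0_x$. Because $G$ is not precompact in the infinite-dimensional setting, this uniformity is not automatic; however, in the additive case it follows cleanly from Lemma \ref{lem:continuity} (uniform continuity of $\alpha$ on bounded sets) together with the uniform bound $|S(\cdot)x|_{C([0,T_0];E)}\le M|x|_E$ from Hypothesis \ref{H:linear}, and in the multiplicative case from the a priori bound of Lemma \ref{lem:apriori-bounds}. The Markov iteration and the rewriting of $\{\tau^\e_{1,x}>T_0\}$ as a large-deviations event are otherwise standard.
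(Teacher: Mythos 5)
Your proposal is correct and follows essentially the same route as the paper: choose $T_0$ via Lemma \ref{lem:attraction}, show that controlled trajectories of small action must track $X^0_x$ (equivalently, that trajectories avoiding a small ball have action bounded below by some $a>0$) using Lemma \ref{lem:continuity} together with the boundedness of $L$, apply the uniform LDP upper bound \eqref{LDP-upper}, and iterate with the Markov property. The only difference is cosmetic — you state the level-set inclusion directly while the paper states its contrapositive — and your remark that the uniformity in $x\in G$ is the delicate point is exactly where the paper also leans on Lemma \ref{lem:continuity}.
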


   \begin{proof}
     By Theorem \ref{lem:attraction}, there exists $T>0$ such that for any $x\in G$, $|X^0_x(T)|_E \in \gamma_{\rho/4}$.
    This means that if $X^\psi_x$ is a controlled trajectory with the property that $X^\psi_x(t) \in G \setminus \gamma_{\rho/2}$ for all $t \in [0,T]$, then,
     \[\frac{\rho}{4} \leq \left|X^\psi_x - X^0_x \right|_{C([0,T];E)}.\]
     By Lemma \ref{lem:continuity}, and the fact that $L$ is a bounded operator, there must exist some $c :=c \left(\sup_{x \in G}|x|_E, \frac{\rho}{4} \right)>0$ such that
     \[\frac{\rho}{4} \leq \left|X^\psi_x - X^0_x \right|_{C([0,T];E)} \Longrightarrow c \leq \sup_{t \in [0,T]} \left| \int_0^t S(t-s) Q \psi(s) ds \right|_E \leq \|L\|_{\mathcal{L}(L^2((0,+\infty);H):E)} |\psi|_{L^2([0,T];E)}. \]
     From these observations, it follows that if $X^\psi_x(t) \in G \setminus \gamma_{\rho/2}$ for all $t \in [0,T]$, then
     \[I_{0,T}(X^\psi_x) = \frac{1}{2} |\psi|_{L^2([0,T];H)}^2 \geq \frac{c^2}{2\|L\|_{\mathcal{L}(L^2((0,+\infty);H):E)}}:= a>0.\]
     Another way to say this is
     \[K_{0,T}^x(a) \subseteq \{\varphi \in C([0,T];E): \varphi(t) \not \in G \setminus \gamma_{\rho/2} \text{ for some } t \in [0,T] \}\]
     where $K^x_{0,T}$ is the level set defined by \eqref{K_0T-def}.
     Then, if $\varphi$ is a trajectory such that $\varphi(t) \in G \setminus \gamma_\rho$ for all $t \in [0,T]$,
     \[\dist_{C([0,T];E)}\left(\varphi, K_{0,T}\left(a \right) \right)> \frac{\rho}{2}.\]
     Because the event
     \[\{\tau^\e_{1,x} \geq T\} \subseteq \{X^\e_x(t) \in G \setminus \gamma_\rho, \text{ for all } t \in [0,T]\} \subseteq \left\{\dist_{C([0,T];E)}\left(X^\e_x, K^x_{0,T} \left(a \right) \right)> \frac{\rho}{2}\right\},\]
     by the large deviations principle \eqref{LDP-upper},
     \begin{equation*}
     \begin{array}{l}
       \ds{\limsup_{\e \to 0} \e \log \left( \sup_{x \in G} \Pro \left( \tau^\e_{1,x} \geq T \right) \right) } \\
       \ds{\leq \limsup_{\e \to 0} \e \log \left( \sup_{x \in G} \Pro \left( \textnormal{dist}_{C([0,T];E)}\left(X^\e_x, K_{0,T}\left(a \right) \right) > \frac{\rho}{2} \right) \right) \leq -a. }
     \end{array}
     \end{equation*}

     By the Markov property, for any $k \in \nat$,
     \[\sup_{x \in G} \Pro \left( \tau^\e_{1,x} \ge kT \right) \leq \left( \sup_{x \in G} \Pro \left( \tau^\e_{1,x} \geq T \right)  \right)^k \]
     and therefore,
     \[\limsup_{\e \to 0}\e \log \left( \sup_{x \in G} \Pro \left( \tau^\e_{1,x} \geq Tk \right) \right) \leq -ka. \]
     Our result follows because we can choose $k$ to be arbitrarily large.
   \end{proof}

   \begin{Lemma} \label{et4}
     Let $N \subseteq \partial G$ be closed.
     Then,
     \begin{equation}
       \limsup_{\rho \to 0} \limsup_{\e \to 0} \e \log \left( \sup_{x \in \Gamma_\rho} \Pro\left( X^\e_x(\tau^\e_{1,x}) \in N \right) \right) \leq - V(N).
     \end{equation}
   \end{Lemma}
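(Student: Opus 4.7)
The plan is to relate the event $\{X^\e_x(\tau^\e_{1,x}) \in N\}$ to an event of the form ``$X^\e_x$ is far from every controlled trajectory of rate at most $\nu$'' for some $\nu < V(N)$, so that the uniform LDP upper bound \eqref{LDP-upper} and Lemma \ref{et3} take over. First I would fix any $\nu < V(N)$ and apply Lemma \ref{et2} to produce a $\rho_0 > 0$ with the property that every controlled trajectory $\varphi = X^\psi_y$ with $|y|_E < \rho_0$ and $I_{0,T}(\varphi) \leq \nu$ satisfies $\inf_{t \in [0,T]} \dist_E(\varphi(t), N) > |y|_E$ for every $T > 0$. I then restrict attention to $\rho < \rho_0/2$, so that any $x \in \Gamma_\rho$ has $|x|_E = 2\rho < \rho_0$ and the lemma applies with $|y|_E = 2\rho$.

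The key geometric observation is then: for such $x$, every element of the level set $K^x_{0,T}(\nu)$ stays at distance strictly greater than $2\rho$ from $N$ on $[0,T]$, so if $\dist_{C([0,T];E)}(X^\e_x, K^x_{0,T}(\nu)) \leq \rho$ the trajectory $X^\e_x$ is still at distance $> \rho$ from $N$ on all of $[0,T]$ and in particular never touches $N$. Decomposing on whether $\tau^\e_{1,x} \leq T$ or not, this yields for any $T > 0$
\[
\Pro\bigl(X^\e_x(\tau^\e_{1,x}) \in N\bigr) \leq \Pro(\tau^\e_{1,x} > T) + \Pro\Bigl(\dist_{C([0,T];E)}\bigl(X^\e_x, K^x_{0,T}(\nu)\bigr) > \rho \Bigr).
\]

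Next I would take $\sup_{x \in \Gamma_\rho}$ and then $\e \log$ on both sides. Using the elementary bound $\log(a+b) \leq \log 2 + \max(\log a, \log b)$, the right side splits into two asymptotic contributions. The second one is controlled by the uniform large deviations upper bound \eqref{LDP-upper} (applied with $R = 1$, which contains $\Gamma_\rho$ for small $\rho$), giving a $\limsup$ at most $-\nu$. The first one is controlled by Lemma \ref{et3}: choose $T$ large enough that $\limsup_{\e \to 0} \e \log \sup_{x \in G} \Pro(\tau^\e_{1,x} \geq T) < -\nu$. Together these produce
\[
\limsup_{\e \to 0} \e \log \sup_{x \in \Gamma_\rho} \Pro\bigl(X^\e_x(\tau^\e_{1,x}) \in N\bigr) \leq -\nu
\]
for every $\rho \in (0, \rho_0/2)$, so the same bound survives $\limsup_{\rho \to 0}$. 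Finally letting $\nu \uparrow V(N)$ gives the conclusion (with the obvious reading when $V(N) = +\infty$).

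The main conceptual point, as opposed to calculation, is making the two uses of $\rho$ consistent: the $\rho$ in $\Gamma_\rho = \{|x|_E = 2\rho\}$ must coincide with the $\rho$-thickening of the level set that appears in the LDP upper bound. This works precisely because Lemma \ref{et2} gives a strict distance of more than $|\varphi(0)|_E = 2\rho$ between low-rate trajectories and $N$, leaving exactly a $\rho$-buffer for the stochastic perturbation to use. No further continuity-in-initial-condition argument is needed, since Lemma \ref{et2} already handles initial data uniformly in the small ball.
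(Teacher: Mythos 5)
Your proposal is correct and follows essentially the same route as the paper: fix $\nu < V(N)$, use Lemma \ref{et2} to separate the level sets $K^x_{0,T}(\nu)$ from $N$ for $x \in \Gamma_\rho$ with $\rho$ small, decompose on $\{\tau^\e_{1,x} \leq T\}$ versus $\{\tau^\e_{1,x} > T\}$, and control the two pieces by \eqref{LDP-upper} and Lemma \ref{et3} respectively before letting $\nu \uparrow V(N)$. The only (harmless) cosmetic difference is that the paper uses the threshold $2\rho$ in the distance event where you use $\rho$; your choice is in fact the slightly more careful one, since it leaves a genuine buffer after passing from the pointwise separation of Lemma \ref{et2} to the infimum over the level set.
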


   \begin{proof}
     Let $\nu_0 < V(N)$. Let $\rho_0$ be the radius from Lemma \ref{et2} corresponding to $\nu_0$, and choose $ \rho < \frac{\rho_0}{2}$. Then, if $\varphi \in C([0,T];E)$ with $\varphi(0) \in \Gamma_\rho$ and $I_{0,T}( \varphi) \leq \nu_0$, it follows from Lemma \ref{et2} that
     \[\inf_{t \in [0,T]}\textnormal{dist}_E(\varphi(t),N) >|\varphi(0)|_E = 2\rho.\]
     Therefore, for any $T>0$, we have
     \[\begin{array}{l}
     \ds{\left\{ X^\e_x(\tau^\e_{1,x}) \in N, \tau^\e_{1,x} \leq T \right\} \subset \left\{X^\e_x(t) \in N \text{ for some } t \leq T \right\}}\\
     \vs
     \ds{\subset \left\{\dist_{C([0,T];E)}(X^\e_x, K^x_{0,T}(\nu_0)) > 2 \rho \right\}.}
     \end{array}\]
     By the large deviations principle \eqref{LDP-upper}, for any $T>0$, this implies that
     \[\begin{array}{l}
     \ds{\limsup_{\e\to 0} \e \log  \left(\sup_{x \in \Gamma_\rho} \Pro \left( X^\e_x(\tau^\e_{1,x}) \in N, \tau^\e_{1,x} \leq T \right)   \right)}\\
     \ds{ \leq
       \limsup_{\e \to 0} \e \log \left( \sup_{x \in \Gamma_\rho} \Pro \left( \textnormal{dist}_{C([0,T];E)} \left( X^\e_x, K^x_{0,T}(\nu_0)  \right) > 2  \rho \right) \right)}\\
       \ds{ \leq -\nu_0.} \end{array}\]
     Furthermore, by Lemma \ref{et3}, we can find $T$ large enough so that
     \[\limsup_{\e \to 0} \e \log \left( \sup_{x \in \Gamma_\rho} \Pro \left( \tau^\e_{1,x} \geq T \right) \right) \leq - \nu_0.\]
     Then we observe that
     \[\Pro \left( X^\e_x(\tau^\e_{1,x}) \in N \right) \leq \Pro\left( X^\e_x(\tau^\e_{1,x}) \in N, \tau^\e_{1,x} \leq T \right) + \Pro\left( \tau^\e_{1,x} \geq T \right) \]
     and therefore,
     \[\limsup_{\e \to 0} \e \log \left( \sup_{x \in \Gamma_\rho} \Pro \left( X^\e_x(\tau^\e_{1,x}) \in N \right) \right) \leq -\nu_0.\]
     The result follows because $\nu_0< V(N)$ was arbitrary.

   \end{proof}

   \begin{Lemma} \label{et6}
     For any $\rho>0$ such that $\gamma_\rho \subset G$, and any $x \in G$,
     \begin{equation}
       \lim_{\e \to 0} \Pro \left( X^\e_x(\tau^\e_{1,x}) \in \gamma_\rho \right) = 1.
     \end{equation}
   \end{Lemma}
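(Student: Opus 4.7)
The plan is to show that, with probability tending to one, the stochastic trajectory $X^\e_x$ remains close to the deterministic trajectory $X^0_x$ on a fixed time window long enough for the latter to enter the interior of $\gamma_\rho$ without leaving $G$. If $|x|_E \leq \rho$ then $\tau^\e_{1,x}=0$ and $X^\e_x(\tau^\e_{1,x})=x \in \gamma_\rho$ deterministically, so there is nothing to prove, and henceforth I assume $|x|_E > \rho$.

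First, I would use Lemma \ref{lem:attraction} to pick $T > 0$ with $|X^0_x(T)|_E < \rho/2$. Since the unperturbed system does not leave $G$ (the standing hypothesis of the exit problem, as stated in the introduction), the image $\{X^0_x(s) : s \in [0,T]\}$ is a compact subset of the open set $G$, so
\[d := \inf_{s \in [0,T]} \dist_E(X^0_x(s), \partial G) > 0.\]

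Next I would establish that $X^\e_x \to X^0_x$ in probability in $C([0,T];E)$ as $\e \to 0$. To see this, note that the finite-time analog of Theorem \ref{thm:compact-level-sets} (or of Theorem \ref{thm:compact-mult} in the multiplicative case) shows that each level set $K^x_{0,T}(r)$ is compact in $C([0,T];E)$. Since $\psi \equiv 0$ is the unique element of $L^2([0,T];H)$ with zero norm, $K^x_{0,T}(0) = \{X^0_x\}$, and the nested compact family satisfies $\sup_{\varphi \in K^x_{0,T}(r)} |\varphi - X^0_x|_{C([0,T];E)} \to 0$ as $r \to 0$. Given any $\delta > 0$, choose $r$ small enough that this supremum is below $\delta/2$; the uniform LDP upper bound \eqref{LDP-upper} then yields
\[\Pro \bigl( |X^\e_x - X^0_x|_{C([0,T];E)} > \delta \bigr) \leq \Pro \bigl( \dist_{C([0,T];E)}(X^\e_x, K^x_{0,T}(r)) > \delta/2 \bigr) \to 0.\]

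Finally, setting $\delta_0 := \frac{1}{2}\min(\rho/2, d)$, on the event $\{|X^\e_x - X^0_x|_{C([0,T];E)} < \delta_0\}$, which has probability tending to one, the bound $|X^\e_x(s) - X^0_x(s)|_E < d$ forces $X^\e_x(s) \in G$ for every $s \in [0,T]$, while $|X^\e_x(T)|_E \leq |X^0_x(T)|_E + \delta_0 < \rho$ gives $X^\e_x(T) \in \gamma_\rho$. Hence $\tau^\e_{1,x} \leq T$ and the first meeting of $\gamma_\rho \cup \partial G$ occurs inside $\gamma_\rho$, which is the claim. The main subtlety is the implicit use of invariance of $G$ under the deterministic flow, needed to guarantee $d > 0$; without it the unperturbed trajectory could cross $\partial G$ before reaching $\gamma_\rho$, and the conclusion would fail.
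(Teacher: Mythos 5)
Your proposal is correct and follows essentially the same route as the paper: fix $T$ so that $X^0_x(T)$ lands well inside $\gamma_\rho$, note that the unperturbed trajectory stays at positive distance from $\bar G^c$, and conclude from $X^\e_x \to X^0_x$ in probability in $C([0,T];E)$. The only difference is that you justify the convergence-in-probability step explicitly (via the LDP upper bound and compactness of level sets), whereas the paper simply asserts it; you also correctly flag the same implicit invariance of $G$ under the unperturbed flow that the paper's choice of $\rho_0$ relies on.
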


   \begin{proof}
     Fix $x \in G$. Let $\rho_0 < \min\{ \frac{\rho}{2}, \inf_{t >0}\dist_E(X^0_x(t),\bar{G}^c)\}$, where $X^0_x$ be the unperturbed trajectory starting at $x \in G$. Because $X^0_x \to 0$, we can find $T>0$ such that $|X^0_x(T)|_E < \frac{\rho}{2}$.
     Then, 
     \[\Pro \left( X^\e_x(\tau^\e_{1,x}) \in \gamma_\rho \right) \geq \Pro \left( \left|X^\e_x - X^0_x \right|_{C([0,T];E)}< \rho_0 \right) \to 1.\]
   \end{proof}

   \begin{Lemma} \label{et5} For fixed $\rho>0$,
     \[\lim_{T \to 0} \limsup_{\e \to 0} \e \log \left( \sup_{x \in \gamma_\rho} \Pro \left( \text{There exists } t \in [0,T], X^\e_x(t) \in \Gamma_\rho \right) \right) = -\infty.\]
   \end{Lemma}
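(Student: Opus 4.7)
The plan is to reduce the probabilistic claim to a purely deterministic property of the controlled trajectories and then invoke the uniform large deviations upper bound \eqref{LDP-upper}. Fix $r > 0$. First I would show there exists $T_0 = T_0(r,\rho) > 0$ such that for every $T \in (0, T_0]$, every $x \in \gamma_\rho$, and every $\varphi \in K^x_{0,T}(r)$, one has $\sup_{t \in [0,T]} |\varphi(t)|_E \leq \tfrac{3\rho}{2}$. Granting this, whenever $X^\e_x(t_0) \in \Gamma_\rho$ for some $t_0 \leq T$, the reverse triangle inequality forces $|X^\e_x(t_0) - \varphi(t_0)|_E \geq \rho/2$ for every $\varphi \in K^x_{0,T}(r)$, so \eqref{LDP-upper} applied with $\delta = \rho/2$ gives
\[\limsup_{\e \to 0} \e \log \sup_{x \in \gamma_\rho} \Pro\bigl(\exists\, t \in [0,T],\; X^\e_x(t) \in \Gamma_\rho\bigr) \leq -r.\]
Since $r$ was arbitrary, letting $T \to 0$ yields $\limsup_{T \to 0}\limsup_{\e \to 0} \e \log(\cdot) \leq -r$ for every $r > 0$, which is the stated $-\infty$ limit.

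The deterministic estimate splits into two pieces. By Lemma~\ref{lem:attraction}, $|X^0_x(t)|_E \leq |x|_E \leq \rho$ for every $t \geq 0$ and $x \in \gamma_\rho$, so it suffices to show that $|X^\psi_x - X^0_x|_{C([0,T];E)} \leq \rho/2$ uniformly over $x \in \gamma_\rho$ and $\psi$ with $\tfrac{1}{2}|\psi|_{L^2([0,T];H)}^2 \leq r$, once $T$ is small. In the additive-noise case, write $X^\psi_x = \alpha(S(\cdot)x + Z^\psi)$ and $X^0_x = \alpha(S(\cdot)x)$ in the notation of \eqref{eq:alpha-def}. Since $Z^\psi(t) = L_t(\psi|_{[0,t]})$ and $\|L_t\|_{\mathcal{L}(L^2([0,t];H),E)}$ is nondecreasing in $t$, Corollary~\ref{cor:L_t->0} yields $\sup_{t \in [0,T]}|Z^\psi(t)|_E \leq \|L_T\|_{\mathcal{L}(L^2([0,T];H),E)} \sqrt{2r} \to 0$ as $T \to 0$, uniformly over admissible $\psi$. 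Applying Lemma~\ref{lem:continuity} to the two profiles $S(\cdot)x$ (bounded in $C([0,T];E)$ by $M\rho$) and $S(\cdot)x + Z^\psi$ then delivers the required uniform smallness of $|X^\psi_x - X^0_x|_{C([0,T];E)}$.

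In the multiplicative case, Lemma~\ref{lem:apriori-bounds} bounds $\sup_{t \leq T}|X^\psi_x(t)|_E$ by a constant $C(\rho,r)$ independent of $T$. Setting $v_\psi(s) := B(X^\psi_x(s))\psi(s)$, Hypothesis~\ref{H:B-multiplicative} gives $|v_\psi|_{L^2([0,T];H)} \leq \kappa(1+C)\sqrt{2r}$, and the effective convolution $\int_0^\cdot S(\cdot-s)Q v_\psi(s)\,ds$ is again controlled by $\|L_T\|_{\mathcal{L}(L^2([0,T];H),E)} \to 0$. A dissipativity computation paralleling Lemma~\ref{lem:alpha-bound}, but applied to $X^\psi_x - X^0_x$, transfers this smallness to the trajectory difference. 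The main obstacle is securing a version of Lemma~\ref{lem:continuity} whose modulus $\delta_\rho$ is uniform in $T \in (0, 1]$: a careful reread of its proof is needed to verify that the constants depend only on $\rho$, on the semigroup bounds $M$ and $\omega$ from Hypothesis~\ref{H:linear}, and on the uniform-continuity modulus of $F$ on bounded sets, and not on $T$ itself. Once that is in place the entire scheme closes and the arbitrariness of $r$ completes the proof.
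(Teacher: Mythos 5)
Your proposal is correct and follows essentially the same route as the paper: fixing $r$ and showing that for small $T$ no trajectory in $K^x_{0,T}(r)$ with $x\in\gamma_\rho$ can reach the sphere of radius $3\rho/2$ is just the contrapositive formulation of the paper's argument that $a(T):=\inf_{\varphi\in\Phi_T}I_{0,T}(\varphi)\to+\infty$, and both hinge on the same ingredients, namely $|X^0_x(t)|_E\le|x|_E$, the uniform (in $T$) modulus from Lemma \ref{lem:continuity}, and $\|L_T\|_{\mathcal{L}(L^2([0,T];H),E)}\to 0$ from Corollary \ref{cor:L_t->0}, followed by the upper bound \eqref{LDP-upper}. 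The uniformity in $T$ of the modulus in Lemma \ref{lem:continuity} that you flag does hold (its proof only involves $\lambda$ and the modulus of continuity of $F$ on bounded sets), and your explicit treatment of the multiplicative case via Lemma \ref{lem:apriori-bounds} fills in a step the paper leaves implicit.
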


   \begin{proof}
     Since $|X^0_x(t)|_E \leq |x|_E$ for all $t \geq 0$, we have
     \[\sup_{t \geq0} \sup_{x \in \gamma_\rho} |X^0_x(t)|_E \leq \rho.\]
     We define
     \[\Phi_T:= \left\{\varphi \in C([0,T];E): \varphi(0) \in \gamma_\rho, \text{ and there exists } t \in [0,T] \text{ such that } |\varphi(t)|_E = \frac{3 \rho}{2}  \right\}\]
     and
     \[a(T) := \inf_{\varphi \in \Phi_T} I_{0,T}(\varphi).\]
     Then, because $\Gamma_\rho = \{x  \in E: |x|_E = 2 \rho\}$, it follows from \eqref{LDP-upper} that
     \begin{equation*}
      \begin{array}{l}
       \ds{\limsup_{\e \to 0} \e \log \left(\sup_{x \in \gamma_\rho} \Pro \left( \text{ there exists }t \in [0,T] \text{ such that }  X^\e_x(t) \in \Gamma_\rho \right) \right)}\\
       \ds{\leq \limsup_{\e \to 0} \e \log \left( \sup_{x \in \gamma_\rho} \Pro \left( \textnormal{dist}_{C([0,T];E)} (X^\e_x, K^x_{0,T}(a(T))) > \frac{  \rho}{2} \right)  \right)
       \leq -a(T)}.
      \end{array}
     \end{equation*}

     Therefore, it remains to show that
     \begin{equation} \label{eq:a->infty}
       \lim_{T \to 0} a(T)  = +\infty.
     \end{equation}
     Let $\{T_n\}$ be a sequence such that $T_n \to 0$ and for any $n \in \nat$ let $x_n \in \gamma_\rho$ and $\psi_n \in L^2([0,T_n];H)$ such that $X^{\psi_n}_{x_n} \in \Phi_{T_n}$ and $\frac{1}{2}|\psi_n|_{L^2([0,T_n];H)}^2 \leq a(T_n) + \frac{1}{n}$. If we show that $|\psi_n|_{L^2([0,T_n];H)} \to +\infty$, then \eqref{eq:a->infty} follows.

      Because $X^{\psi_n}_{x_n} \in \Phi_{T_n}$,
     \[\frac{3  \rho}{2} \leq |X^{\psi_n}_{x_n}|_{C([0,T_n];E)} \leq |X^{\psi_n}_{x_n} - X^0_{x_n}|_{C([0,T_n];E)} + |X^0_{x_n}|_{C([0,T];E)} \leq |X^{\psi_n}_{x_n} - X^0_{x_n}|_{C([0,T_n];E)} + \rho.\]
     Therefore,
     \[\frac{\rho}{2} \leq |X^{\psi_n}_{x_n} - X^0_{x_n}|_{C([0,T_n];E)} .\]
     By Lemma \ref{lem:continuity}, there must exist some $c>0$ independent of $n$ such that
     \[\frac{\rho}{2} \leq |X^{\psi_n}_{x_n} - X^0_{x_n}|_{C([0,T_n];E)} \Longrightarrow c\leq \sup_{t \leq T_n}\left|\int_0^t S(t-s) Q \psi_n(s) ds \right|_E\leq \|L_t\|_{\mathcal{L}(L^2([0,t];H),E)}|\psi_n|_{L^2([0,t];H)}.\]
     But by Corollary \ref{cor:L_t->0}, $\|L_t\|_{\mathcal{L}(L^2([0,t];H),E)} \to 0$.
    Therefore we conclude that
    \[\lim_{n \to +\infty} |\psi_n|_{L^2([0,T_n];H)} = +\infty\]
    and \eqref{eq:a->infty} follows.
   \end{proof}
\subsection{Proof of Theorem \ref{exit-thm}}
\begin{proof}
  The following proofs are based on the arguments used in \cite{dz}. For completeness, they are included below.\\
  \textbf{Upper Bound}

  By the Markov property, for fixed $\e>0$ and $T>0$,
  \[ \sup_{x \in G} \Pro \left( \tau^\e_x \geq kT \right) \leq \left( \sup_{x \in G} \Pro \left(\tau^\e_x \geq T \right) \right)^k.\]
  It follows that
  \[\begin{array}{l}
  \ds{\E (\tau^\e_x) \leq T \sum_{k=0}^\infty \Pro \left( \tau^\e_x \geq kT \right) \leq T \sum_{k=0}^\infty \left( \sup_{x \in G} \Pro \left(\tau^\e_x \geq T \right) \right)^k}\\
  \ds{\leq \frac{T}{1 - \left( \sup_{x \in G} \Pro \left(\tau^\e_x \geq T \right) \right)} \leq \frac{T}{\inf_{x \in G} \Pro \left( \tau^\e_x < T \right)}.} \end{array}  \]
  Fix $\eta>0$.  By Lemma \ref{et1}, we can find a $T>0$ such that
  \[\liminf_{\e \to 0} \e \log \left( \inf_{x \in G} \Pro \left( \tau^\e_x < T \right) \right) > -\left(V(\partial G) + \frac{\eta}{2} \right) .\]
  Therefore,
  \[\limsup_{\e \to 0} \e \log \E \tau^\e_x \leq \limsup_{\e \to 0} \e \log \left( \frac{T}{\inf_{x \in G} \Pro \left( \tau^\e_x < T \right)} \right) \leq (V(\partial G) + \eta)\]
  and because $\eta >0$ was arbitrary, the upper bound of \eqref{expected-exit-time-eq} follows. The upper bound of \eqref{exit-time-eq} follows by a straightforward application of the Chebyshev inequality.\\
  \textbf{Lower Bound}

  Let $\gamma_\rho = \{x \in E: |x|_E \leq \rho\}$ and $\Gamma_\rho = \{x \in E: |x|_E = 2 \rho\}$.
  Define the stopping times
  \begin{equation} \label{stopping-times}
    \begin{array}{l}
      \tau^\e_{1,x} = \inf\{t>0: X^\e_x(t) \in \gamma_\rho \cup \partial G\}\\
      \sigma^\e_{n+1,x} = \inf\{t> \tau^\e_{n,x}: X^\e_x(t) \in \Gamma_\rho\}\\
      \tau^\e_{n+1,x} = \inf\{t> \sigma^\e_{n+1,x}: X^\e_x(t) \in \gamma_\rho \cup \partial G\}.
    \end{array}
  \end{equation}
  Let $\eta>0$. Using Lemma \ref{et4}, find $\rho >0$ small enough so that
  \[\limsup_{\e \to 0} \e \log \left( \sup_{x \in \Gamma_\rho} \Pro \left( X^\e_x(\tau^\e_{1,x}) \in \partial G \right) \right) < - V(\partial G) + \frac{\eta}{2}. \]
  Notice that for $m \geq 2$, by the Markov property,
  \[\begin{array}{l}
   \ds{\sup_{x \in G} \Pro \left( \tau^\e_x = \tau^\e_{m,x} \right) \leq \left( \sup_{x \in G} \Pro \left( \tau^\e \not =  \tau^\e_{k,x}, 1\leq k \leq m-1  \right) \right) \left(\sup_{x \in \Gamma_\rho} \Pro \left( \tau^\e_{1,x} \in \partial G \right) \right)}\\
    \vs
    \ds{\leq \sup_{x \in \Gamma_\rho} \Pro \left( \tau^\e_{1,x} \in \partial G \right).}
    \end{array}  \]
  Next, using Lemma \ref{et5}, we find $T_0>0$ small enough so that
  \[\limsup_{\e \to 0} \e \log \left( \sup_{x \in \gamma_\rho} \Pro \left( \text{There exists } t\in [0,T_0], X^\e_x(t) \in \Gamma_\rho \right) \right) \leq -V(\partial G).\]
  A consequence of this is that for any $n \in \nat$
  \[\limsup_{\e \to 0} \e \log \sup_{x \in G} \P \left( \sigma^\e_{n+1,x} - \tau^\e_{n,x} \leq T_0 \right) \leq -V(\partial G).\]
  In \cite{dz}, they observe that the event $\{\tau^\e_x \leq kT_0\}$ implies either that $\{ \tau^\e_x = \tau^\e_{m,x}\}$ for some $m \leq k+1$ or that at least one of the excursion times, $\tau^\e_{m+1,x} - \tau^\e_{m,x} \leq T_0$. Therefore, for any $k \in \nat$, $x \in G$, and small enough $\e$,
  \[\Pro \left( \tau^\e_x \leq k T_0 \right) \leq \sum_{m=1}^{k + 1} \left( \Pro\left( \tau^\e_x = \tau^\e_{m,x} \right) + \Pro \left( \sigma^\e_{n+1,x} - \tau^\e_{n,x} \leq T_0 \right) \right) \leq \Pro \left(\tau^\e_x = \tau^\e_{1,x} \right) + 2k e^{-\frac{1}{\e}(V(\partial G) - \frac{\eta}{2})}\]
  If we set $k = \left[\frac{e^{\frac{1}{\e}(V(\partial G) - \eta)}}{T_0} \right] + 1$ then we see that for small enough $\e>0$,
  \[\Pro \left( \tau^\e_x \leq e^{\frac{1}{\e} (V(\partial G) - \eta)} \right) \leq  \Pro \left( \tau^\e_{1,x} \in \partial G \right) + \frac{4}{T_0} e^{-\frac{\eta}{2\e}}.\]
  We apply Lemma \ref{et6} to see that the above quantity converges to $0$ as $\e \to 0$. We have proven the lower bound for \eqref{exit-time-eq}.  The lower bound for \eqref{expected-exit-time-eq} follows by a straightforward application of the Chebyshev inequality.\\
  \textbf{Exit Place}

  The proof of the exit place is very similar to the proof of the lower bound of the exit time. Let $N \subset \partial G$ be closed and have the property that $V(N)> V(\partial G)$. Let $0<\eta< \frac{1}{3}(V(N) - V(\partial G))$. Using Lemma \ref{et4}, we find $\rho$ small enough so that
  \[\limsup_{\e \to 0} \e \log \left(\sup_{x \in \Gamma_\rho} \Pro \left(X^\e_x(\tau^\e_{1,x}) \in N \right) \right) \leq - \left(V(N) - \frac{\eta}{2} \right).\]
  Next, using Lemma \ref{et5}, we choose $T_0$ small enough so that
  \[\limsup_{\e \to 0} \e \log \left( \sup_{x \in \gamma_\rho} \Pro \left( \textnormal{There exists } t\in [0,T_0], X^\e_x(t) \in \Gamma_\rho \right) \right) \leq -\left(V(N)- \frac{\eta}{2} \right).\]
  Using the same stopping times defined in \eqref{stopping-times}, we observe that for $x \in G$, for $k$ to be chosen later, and for small enough $\e$,
  \begin{equation*}
    \begin{array}{l}
      \ds{\Pro \left(X^\e_x(\tau^\e_x) \in N \right) \leq \Pro \left( \tau^\e_x > \tau^\e_{k,x} \right) + \sum_{m=1}^k \Pro \left(\tau^\e_x \geq \tau^\e_{x,m} \right) \Pro \left( X^\e_x(\tau^\e_{m,x}) \in N | \tau^\e_x \geq \tau^\e_{m,x} \right)     }\\
      \ds{\leq \Pro \left( \tau^\e_x > (k-1) T_0 \right) + \Pro \left( \tau^\e_{k,x} \leq (k-1) T_0  \right) + \Pro \left( X^\e_x(\tau^\e_{1,x}) \in N \right)+ \sum_{m=2}^k \sup_{y \in \Gamma_\rho} \Pro \left( X^\e_y(\tau^\e_{1,y}) \in N \right)     }\\
      \ds{\leq \Pro \left( \tau^\e_x > (k-1) T_0 \right) + 2\Pro \left(X^\e_x(\tau^\e_{1,x}) \in N \right)+ 2k e^{-\frac{1}{\e}(V(N) -  \eta)}.         }
    \end{array}
  \end{equation*}
  In the last line, we used the fact that
  \[\Pro\left(\tau^\e_{k,x} \leq (k-1) T_0 \right) \leq  \sum_{m=2}^k \sup_{x \in \gamma_\rho} \Pro \left(\text{There exists } t \in [0,T_0], X^\e_x(t) \in \Gamma_\rho \right).\]
  This is because if $\tau^\e_{k,x} \leq kT_0$, then at least one of $\tau^\e_{m,x} - \tau^\e_{m-1,x}$, $m=2..k$ must be less than $T_0$
  By \eqref{expected-exit-time-eq} and Chebyshev inequality, for small enough $\e$,
  \[\Pro \left( \tau^\e_x > (k-1) T_0 \right) \leq \frac{e^{\frac{1}{\e}(V(\partial G) + \eta)}}{(k-1)T_0}. \]
  Then, we choose $k = [e^{\frac{1}{\e}(V(\partial G) + 2\eta)}]$. Because $V(N) - V(\partial G)> 3\eta$,
  \[2ke^{-\frac{1}{\e}(V(N) - \eta)} \leq 2e^{\frac{1}{\e}(V(\partial G) - V(N) + 3\eta)} \to 0\]
  and we are left with
  \[\limsup_{\e \to 0} \Pro \left( X^\e_x(\tau^\e_x) \in N \right) \leq 2 \limsup_{\e \to 0} \Pro \left( X^\e_x(\tau^\e_{1,x}) \in N \right).\]
  Finally, by Lemma \ref{et6},
  \[\lim_{\e \to 0} \Pro \left( X^\e_x(\tau^\e_{1,x}) \in N \right) =0\]
  and our result follows.
\end{proof}

\end{document}